\newtheorem{thm}{Theorem}[section]
\newtheorem{lem}[thm]{Lemma}
\newtheorem{corol}[thm]{Corollary}
\newtheorem*{corol*}{Corollary}
\newtheorem{prop}[thm]{Proposition}
\newtheorem*{thm*}{Theorem}
\newtheorem*{cnj*}{Conjecture}
\newtheorem{claim}[thm]{Claim}
\theoremstyle{definition}
\newtheorem{rmk}[thm]{Remark}
\newtheorem{dfn}[thm]{Definition}
\newtheorem*{conj*}{Conjecture}
\newcommand{\cA}{\mathcal{A}}
\newcommand{\cB}{\mathcal{B}}
\newcommand{\cE}{\mathcal{E}}
\newcommand{\cL}{\mathcal{L}}
\newcommand{\cF}{\mathcal{F}}
\newcommand{\cG}{\mathcal{G}}
\newcommand{\cI}{\mathcal{I}}
\newcommand{\cO}{\mathcal{O}}
\DeclareMathOperator{\Ext}{Ext}
\DeclareMathOperator{\Pic}{Pic}
\DeclareMathOperator{\Hom}{Hom}
\DeclareMathOperator{\HH}{H}
\DeclareMathOperator{\codim}{codim}
\DeclareMathOperator{\di}{dim}
\newcommand{\ZZ}{\mathbb Z}
\newcommand{\CC}{\mathbb C}
\newcommand{\FF}{\mathbb F}
\newcommand{\PP}{\mathbb P}
\newcommand{\GG}{\mathbb G}
\newcommand{\bk}{{\boldsymbol k}}
\newcommand{\ra}{\rightarrow}
\numberwithin{equation}{section}
\begin{document}


\title{Ulrich bundles on three dimensional scrolls}

\author{Maria Lucia Fania}
\email{marialucia.fania@univaq.it}
\address{DISIM
University of L'Aquila
Via Vetoio, Loc. Coppito
I-67100 L'Aquila, Italy}

\author{Margherita Lelli-Chiesa}
\email{margherita.lellichiesa@univaq.it}
\address{DISIM
University of L'Aquila
Via Vetoio, Loc. Coppito
I-67100 L'Aquila, Italy}

\author{Joan Pons-Llopis}
\email{juanfrancisco.ponsllopis@univaq.it}
\address{DISIM
University of L'Aquila
Via Vetoio, Loc. Coppito
I-67100 L'Aquila, Italy}

\keywords{ACM vector sheaves and bundles, Ulrich sheaves, scrolls}
\subjclass[2010]{14F05; 13C14; 14J60; 16G60}


\thanks{The first and second authors were partially supported by the Italian PRIN-2015 project \lq\lq\thinspace\!Geometry of Algebraic varieties".}

\begin{abstract}
In this paper we construct Ulrich bundles of low rank on three-dimensional scrolls (with respect to the tautological line bundle). We pay special attention to the four types of threefold scrolls in $\PP^5$ which were classified in \cite{ottaviani:scrolls}.
\end{abstract}

\maketitle

%
%

\section*{Introduction}

Ulrich bundles on projective varieties were introduced (under an algebraic disguise) more than thirty years ago in \cite{ulrich:high-numbers}. At that time, they were called in a different way: linear maximal, or maximally generated, Cohen-Macaulay modules. However, it is with the publication of \cite{eisenbud-schreyer-weyman} that Ulrich bundles stepped into the realm of Algebraic Geometry fully fledged. In this groundbreaking paper, the authors showed that Ulrich bundles could become a crucial object to tackle a wide range of problems. For instance, one associates with a projective variety $X\subset\PP^n$ of dimension $k$ the so called \emph{Chow divisor} $D_X$ of $X$ in the Grassmanian $Gr(k+1,n)$ parametrizing the $(k+1)$-codimensional planes that intersect $X$. The Chow divisor is  classically  studied and it is a challenging problem to give an explicit presentation of its defining equation, the so called \emph{Chow form} of $X$.
In \cite{eisenbud-schreyer-weyman}, it was showed that the Chow form of $X$ is given by the determinant of a square matrix with linear entries (in the Pl\"{u}cker variables) whenever $X$ supports an Ulrich bundle. All these reasons motivate the question whether any projective variety supports Ulrich bundles.

A vector bundle $\cE$ on a $k$-dimensional projective variety $X\subset\PP^n$ is called Ulrich if it has an $\cO_{\PP^n}$ linear resolution of length $\codim_{\PP^n}(X)=n-k$:
$$
0\rightarrow \cO_{\PP^n}(k-n)^{a_{n-k}}\rightarrow\cO_{\PP^n}(k-n+1)^{a_{n-k-1}}\rightarrow\cdots\cO_{\PP^n}(-1)^{a_1}\rightarrow\cO_{\PP^n}^{a_0}\rightarrow\cE\rightarrow 0,
$$
\noindent that is, all the morphisms of the resolution are given by matrices of linear forms in the polynomial ring $\bk[x_0,\cdots, x_n]$. Among many other definitions, being Ulrich is equivalently stated in terms of the cohomology vanishings $H^{\bullet}(\cE(-i))=0$ for $i=1,\dots, \di(X)$.

The existence of Ulrich bundles on arbitrary projective varieties has interested many ma\-the\-maticians but is still unknown in full generality. Until now, it has been proved for curves, del Pezzo surfaces, rational ACM surfaces in $\PP^4$, ruled surfaces, Abelian surfaces, surfaces with $q=p_g=0$, K3 surfaces, Fano threefolds of even index,  Grassmannians, Segre varieties, certain flag manifolds, ... (see \cite{casanellas-hartshorne-geiss-schreyer},\cite{miro_roig-pons_llopis:surfaces_P4}, \cite{aprodu-costa-miro},\cite{beauv-abelian-ulrich},\cite{casnati-ulrich},\cite{aprodu-farkas-ortega},\cite{beauville-ulrich-intro}, \cite{costa-miro_roig-pons_llopis}, \cite{miro-roig-pons-llopis:fano}, \cite{coskun-costa-hui-miro}, \cite{costa-miro-ulrich-grass}).

If a variety $X$ supports Ulrich bundles, another interesting problem is to compare their lowest rank with respect to to the dimension of $X$; this is generally large. For instance, it has been conjectured in \cite[Conjecture B]{buchweitz-greuel-schreyer} that any Ulrich vector bundle on an smooth hypersurface $X\subset\PP^n$ (distinct from the hyperplane $\PP^{n-1}$) should have rank greater or equal than $2^e$ with $e=[\frac{n-2}{2}]$.

The goal of this paper is to approach the existence of Ulrich bundles of low rank on a wide class of
projective threefolds, namely, projective scrolls of dimension $3$. Because  the existence of Ulrich bundles  on geometrically ruled surfaces has  been considered in  \cite{aprodu-costa-miro}, it is natural to investigate what happens on a $3$-dimensional projective scroll.

It was proved by Ottaviani in \cite{ottaviani:scrolls} that there are only four types of smooth $3$-dimensional scrolls over a surface that are embedded in  $\PP^5$, all classical,  namely,  the Segre scroll, the Bordiga scroll, the Palatini scroll and the scroll over a $K3$ surface; they have
degree $3, 6, 7, 9$, respectively. Note that  the base surface of both the Segre scroll and  the Bordiga scroll  is $\PP^2$, while in the case of the Palatini scroll the base is a smooth cubic surface in $\PP^3$.

 In codimension greater than $2$, from the list of known threefolds of low degree  we see that  the base surface  of the scroll is either $\PP^2$,  or a smooth quadric surface $\mathcal Q \subset \PP^3$,   or $\FF_1$ (cf., e.g.,  \cite{fania-livorni-deg10},  \cite[Table 1, Table 2]{besana-fania},  \cite{besana-biancofiore}, \cite[Remark 3.3 and \S 7]{alzati-besana}).

 For this class of $3$-dimensional smooth varieties we investigate the existence of Ulrich bundles of  rank one and two.

 We gather our results concerning the above scrolls in the following:

 \begin{thm}\label{main results}
Let $(X,L) = (\mathbb{P}(\cE), \xi)$ be a $3$-dimensional scroll over a smooth surface $S$. Let  $\xi={\mathcal O}_{\mathbb{P}(\cE)}(1)$ be the tautological line bundle and $\pi: \mathbb{P}(\cE) \to S$ be the projection morphism. Let $X$ be embedded by $|L|=|\xi|$ in $\PP^N$. Then, the following hold:
\begin{enumerate}
\item Assume that $S$ is a $K3$ surface that is a transversal linear section of the Grassmannian $\mathbb{G}(1,5)\subset \PP^{14}$. If $Pic(S)=\ZZ[H]$, then
 $X$ does not support  any Ulrich line bundle but it  supports stable  Ulrich bundles $\cG$ of rank two (Theorem \ref{Ulrich on general K3scroll}).

\noindent If instead  $Pic(S)=\ZZ[H]\oplus \ZZ[C]$ with $H^2=C^2=14$ and $H\cdot C=16$, then $X$ supports both Ulrich line bundles and stable rank two Ulrich bundles (Proposition \ref{Ulrich on K3scroll} and Theorem \ref{stable rk2 Ulrich on K3scroll}).

  \item  If $X$ is a Palatini scroll, up to permutation of the exceptional divisors of the cubic surface $S\subset \PP^3$, there are exactly six Ulrich line bundles on $X$ (Proposition \ref{Ulrich on Palatini}). Furthermore, $X$ supports stable  rank two Ulrich bundles (Theorem \ref{stable rk2 Ulrich on PalatiniScroll}).
\item  If $X$ is a scroll over $\PP^2$, then it supports some Ulrich line bundles if and only if $(c_1(\cE),c_2(\cE))\in\{(2,1),(4,10),(4,6),(5,15)     \}$; in all of these cases, there are exactly two  Ulrich  line bundles (Propostion  \ref{p_2}). On the other hand, $X$ always supports stable rank $2$ bundles (Theorem \ref{stable rk2 Ulrich on Scrolls overP2}).
\item If $X$ is a scroll over ${\mathcal Q}^2$, then it does not support any Ulrich line bundle unless  $c_1({\mathcal O}_{{\mathcal Q}^2}(3,3)$ and $c_2(\cE)=9$, in which case there are exactly two Ulrich  line bundles (Propostion  \ref{Q_2_and_F1}). Furthermore, in this case  $X$ also supports stable rank $2$ bundles (Theorem \ref{stablerank2UlrichonQ2}).
\item If $X$ is a scroll over $\FF_1$, then it does not support any Ulrich line bundle (Propostion  \ref{Q_2_and_F1}); however, it supports stable rank two Ulrich bundles  (Theorem  \ref{rk2Ulrich on scroll over F1}).
\end{enumerate}
\end{thm}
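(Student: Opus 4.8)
The plan is to prove the five-part classification theorem by analyzing each type of scroll separately, since the geometry of the base surface governs both the obstruction to Ulrich line bundles and the construction of rank-two bundles. The unifying principle I would exploit is the cohomological characterization: a bundle $\cE$ on the scroll $X=\PP(\cE)$ is Ulrich with respect to $\xi$ precisely when $\HH^\bullet(\cE(-i\xi))=0$ for $i=1,2,3$. Since $\pi\colon\PP(\cE)\to S$ is a projective bundle, I would systematically push cohomology down to the base surface $S$ via the projection formula and the relative Euler sequence, converting each vanishing condition into a statement about twists of symmetric and tensor powers of $\cE$ on $S$. The numerical invariants $(c_1(\cE),c_2(\cE))$ together with $\Pic(S)$ then control everything.

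For the line bundle parts (statements about when Ulrich line bundles exist and how many), I would first write the general ansatz for a candidate line bundle $\cM=a\xi+\pi^*B$ on $X$ with $B\in\Pic(S)$, compute its Chern data, and impose the three cohomology vanishings. Pushing $\cM(-i\xi)$ forward yields line bundles of the form $(a-i)$-th symmetric power twisted by $\pi^*B$ on $S$; for $a-i$ negative, $R^1\pi_*$ contributes, and the Leray spectral sequence reduces the problem to a finite list of cohomology vanishings on $S$ for the specific $\cE$ at hand. For the $K3$ base with $\Pic(S)=\ZZ[H]$ the relevant intersection numbers force a contradiction, establishing nonexistence; in the Picard-rank-two $K3$ case, the Palatini case, and the specific scrolls over $\PP^2$ and $\cQ^2$, the Diophantine conditions on $a$ and $B$ have exactly the tabulated solutions, which I would verify by direct intersection-theoretic computation. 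The sharp counting (``exactly six'', ``exactly two'') requires checking that no further twists by $\pi^*B$ survive, which is where the rank and determinant of $\cE$ enter decisively.

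For the rank-two existence statements I would use a Serre-type construction (equivalently, an extension approach): build $\cG$ as an extension $0\to\cM_1\to\cG\to\cM_2\otimes\cI_Z\to0$ of twisted line bundles by a codimension-two subscheme $Z$, or as a nonsplit extension of two line bundles on $X$, chosen so that the Chern classes match those forced by the Ulrich condition for rank two. Concretely, I would first determine the numerical class an Ulrich rank-two bundle must have (from the expected Hilbert polynomial), then realize it as an extension and verify the cohomology vanishings using the long exact sequence together with the line-bundle computations already performed. Stability would follow by bounding the degrees of destabilizing sub-line-bundles against $c_1(\cG)\cdot\xi^2$; for a sufficiently general extension the sub-line-bundles are constrained enough to rule out destabilization.

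The main obstacle I anticipate is the rank-two construction on those scrolls that support \emph{no} Ulrich line bundle, namely the general $K3$ scroll, the scroll over $\FF_1$, and the scroll over $\cQ^2$ in the non-exceptional range. There the extension cannot be assembled from Ulrich line bundles, so one must either use non-Ulrich building blocks whose extension nevertheless satisfies all three vanishings, or invoke a Cayley--Bacharach condition on $Z$ to guarantee the extension exists and is locally free. Verifying that such a $Z$ (of the correct length, dictated by $c_2(\cG)$) actually exists on $X$, and that the resulting $\cG$ is simultaneously locally free, Ulrich, and stable, is the delicate core of the argument and will require the specific geometry of each base surface; I expect the $\FF_1$ and general-$K3$ cases to be the hardest, since the absence of any line-bundle solution removes the most natural source of sub-objects to build from.
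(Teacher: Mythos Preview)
Your treatment of the line-bundle classification matches the paper's exactly: write $\cM=a\xi+\pi^*B$, push the vanishings $H^\bullet(\cM(-i\xi))=0$ down to $S$ via Leray, and solve the resulting Diophantine constraints on $(a,B)$ in terms of $(c_1(\cE),c_2(\cE))$ and $\Pic(S)$. This is Corollary~\ref{equivalent conditions for line bdl} and its case-by-case applications.

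The rank-two part, however, misses the paper's central mechanism. You propose to build $\cG$ on $X$ via Serre constructions or extensions of line bundles on $X$, and you correctly flag that this runs into trouble precisely on the scrolls admitting no Ulrich line bundles (the general $K3$ scroll, the $\FF_1$ scroll). The paper bypasses this entirely with a pullback theorem (Theorem~\ref{pullback}): if $\cF$ is a rank-$r$ bundle on $S$ with $H^i(S,\cF)=H^i(S,\cF(-c_1(\cE)))=0$ for all $i$, then $\cG:=\pi^*\cF\otimes\xi$ is Ulrich on $X$. Taking $\cF=\cF'(-c_1(\cE))$ for $\cF'$ a rank-two Ulrich bundle on $(S,c_1(\cE))$ satisfies the hypotheses automatically, so the problem reduces to the existence of rank-two Ulrich bundles on the \emph{surface} $S$ with respect to the polarization $c_1(\cE)$. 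For $K3$, cubic surfaces, $\PP^2$, and $\FF_1$ such bundles are supplied by the existing literature (\cite{aprodu-farkas-ortega,faenzi,pons_llopis-tonini,beauville-ulrich-intro,aprodu-costa-miro}). Stability then comes for free: since Ulrich bundles are semistable and any strictly semistable rank-two Ulrich bundle is an extension of Ulrich line bundles, the absence of Ulrich line bundles on $X$ (or a Chern-class mismatch with the available ones) forces stability. This is exactly the opposite of what you anticipated: the no-line-bundle cases are the \emph{easiest} for stability, not the hardest.

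The one place your extension-then-deform strategy does appear is the $\cQ^2$ scroll with $c_1(\cE)=\cO(3,3)$, $c_2(\cE)=9$: there the paper builds a $7$-dimensional family of simple extensions of the two Ulrich line bundles, checks $H^2(\cF\otimes\cF^\vee)=0$, and observes that the smooth modular family has dimension $1-\chi(\cF\otimes\cF^\vee)=15>7$, so the generic member is stable. Your Serre/Cayley--Bacharach route is not used anywhere.
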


Also note that in the cases of the above Theorem where Ulrich line bundles on $X$ do exist, these are completely classified.

The structure of the paper is as follows. In Section \ref{Notation and Preliminaries} we collect all the necessary notation and background material.  In Section \ref{generalities} we prove some general results about Ulrich bundles  on projective scrolls. Let $\pi:X=\PP(\cE)\to Y$ be a projective bundle and denote by $\xi={\mathcal O}_{\mathbb{P}(\cE)}(1)$  the tautological line bundle. Theorem \ref{equivalent conditions} provides numerical criteria  for a line bundle $a\xi+\pi^*D$ with $a\in\ZZ$ and $D\in\Pic(Y)$ to be Ulrich, in terms of vanishing of the cohomology groups for vector bundles of the form $S^j\cE(D)$ on $X$.  In particular, we recover both \cite[Prop. 5]{beauville-ulrich-intro} and  \cite[Theo. 2.1]{aprodu-costa-miro}. Theorem \ref{pullback} gives a method in order to construct Ulrich bundles on $X$ twisting by $\xi$ the pullback of some vector bundles on the base $Y$ that satisfy certain vanishings.
In Section \ref{Scroll over a $K3$ surface} we consider $3$-dimensional scrolls $X\subset \PP^5$ over $K3$ surfaces $S$  that are transversal intersections of the $8$-dimensional Grassmannian $\mathbb{G}(1, 5) \subset \PP^{14}$ with a $\mathbb{P}^8\subset \PP^{14}$. We will prove that  if \, $\mathrm{Pic}(S)$ has rank $1$ then the  $3$-dimensional scroll  $X$ does not support any  Ulrich line bundle, while  it carries stable Ulrich bundles of rank $2$. The existence of Ulrich line bundles is then established when $S$ lies in a Noether-Lefschetz divisor as in the statement of Theorem \ref{main results} (cf. Proposition \ref{Ulrich on K3scroll}).

In Section \ref{Palatini Scroll}  we show that on the Palatini  $3$-fold $X\subset\PP^5$ there are three Ulrich line bundles of type $L=2\xi+\pi^{*}D$ for appropriate $D$, jointly with their respective companions, cf. Proposition \ref{Ulrich on Palatini}. Such Ulrich line bundles are then used to construct rank two Ulrich bundles which turn out to be simple.
The constructed rank two Ulrich bundles, being extensions of line bundles, cannot be stable.  Theorem \ref{stable rk2 Ulrich on PalatiniScroll} takes care of the existence of  stable rank two Ulrich bundles on the Palatini scroll.

In Section \ref{Scroll over P2;Q2;F1} we prove the existence of Ulrich bundles of rank $1$ and $2$ on  $3$-dimensional scrolls $X$ over $\PP^2$ and ${\mathcal Q}^2$. Furthermore, we show that $3$-dimensional scrolls over $\FF_1$ do not support Ulrich line bundles, while they carry stable rank two Ulrich bundles.  In some of the cases listed in  Proposition \ref{p_2}, the existence of Ulrich  bundles was already known using a different approach.

Finally, in Section \ref{ultima} we perform an opposite construction to that of Theorem \ref{pullback} in the case of $3$-dimensional scrolls $X$ over a surface $S$. More precisely, we prove that (under some splitting type hypotheses) a Ulrich bundle $\cG$ on $X$ (with respect to $\xi$) can be twisted appropriately so that the pushforward of the resulting sheaf is a Ulrich bundle on $S$ with respect to $c_1(\cE)$ having the same rank as $\cG$. This along with Theorem \ref{pullback} provides a one to one correspondence:
$$
\left\{ \begin{array}{c}
          \text{Ulrich bundles $\cF$ of rank $r$ on $S$ } \\
          \text{with respect to $c_1(\cE)$}
        \end{array}\right\}_{\biggm/ \cong_{iso}}\Leftrightarrow\left\{\begin{array}{c}
          \text{Ulrich bundles $\cG$ of rank $r$ on $X$} \\
          \text{ with respect to $\xi$ such that} \\
           \text{$\cG_{|\pi^{-1}(s)}\cong\cO_{\PP^1}(1)^r$ for $s\in S$}
        \end{array}\right\}_{\biggm/ \cong_{iso}}.
$$

%
%

\section{Notation and Preliminaries}
\label{Notation and Preliminaries}
We work over the complex numbers $\CC$. By a variety $X$ we mean an irriducible and reduced projective scheme. Cartier divisors, their associated line bundles, and the invertible sheaves of their holomorphic sections are used with no distinction. Mostly additive notation is used for their group.
Given a sheaf $\cF$ on $X$ with $S^{j}(\cF)$ we denote the $j$-th symmetric product of $\cF$ and with $\cF(D)$ the twist of $\cF$ by a divisor $D$.

 For the reader convenience we recall some well known facts that we will use in the sequel.

 \begin{dfn}
A smooth $3$-dimensional variety  $X\subset \PP^N$ is said to be a scroll over a smooth surface $S$ if $X=\mathbb{P}(\cE)\xrightarrow{\pi} S$, where $\cE$ is a rank two vector bundle over $S$ and   the embedding in  $ \PP^N$ is given by the line bundle  $\cO_{\mathbb{P}(\cE)}(1)$.
 \end{dfn}
\begin{prop}
\label{properties-taut-bundle}
Let $Y$ be a polarized manifold of dimension $m$  and let $\cE$ be a rank $r+1$ vector bundle on $Y$.
 Let $X=\mathbb{P}(\cE)$,  let $\xi= \cO_{\mathbb{P}(\cE)}(1)$ be the tautological line bundle  and let  $\pi: X=\mathbb{P}(\cE) \rightarrow Y$ be the  bundle  projection. Then
 $$ Pic(X) \cong \ZZ\oplus \pi^{*}Pic Y.$$
Moreover we have
\begin{itemize}
\item[(i)] $\pi_{*} \cO_X(l) \cong S^{l}\cE$ for $l\ge 0$,  $\pi_{*} \cO_X(l) = 0$ for $l < 0$,
\item[(ii)] $R^{i} \pi_{*} \cO_X(l) = 0$ for $0<i<r$ and all $l\in \ZZ$ and $R^{r} \pi_{*} \cO_X(l) = 0$ for $l > -r-1$,
\item[(iii)] For any $l\in \ZZ$
$$R^{r} \pi_{*} \cO_X(l) \cong \pi_{*} \cO_X(-l-r-1)^{\vee}\otimes c_1(\cE)^{\vee} $$
\end{itemize}
\end{prop}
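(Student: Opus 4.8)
The plan is to deduce everything from the cohomology of line bundles on the fibres $\PP^r$ of $\pi$, globalised by cohomology-and-base-change and relative Serre duality. First I would settle the Picard group. Restriction to a fibre $F\cong\PP^r$ gives a homomorphism $\Pic(X)\to\Pic(F)=\ZZ$ sending $\xi$ to the ample generator $\cO_{\PP^r}(1)$; hence this map is surjective and split by $1\mapsto\xi$. Its kernel consists of the line bundles trivial on every fibre, and such a line bundle $M$ satisfies $M\cong\pi^{*}\pi_{*}M$ by cohomology-and-base-change, so the kernel is exactly $\pi^{*}\Pic(Y)$ (note $\pi^{*}$ is injective since $\pi_{*}\cO_X=\cO_Y$ yields $\pi_{*}\pi^{*}=\id$). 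This produces the split short exact sequence $0\to\pi^{*}\Pic Y\to\Pic X\to\ZZ\to 0$, i.e. the asserted decomposition.

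For (i) and (ii) I would restrict to the fibres, where $\cO_X(l)|_F=\cO_{\PP^r}(l)$ has the standard cohomology: $H^0=0$ for $l<0$, then $H^i(\PP^r,\cO(l))=0$ for $0<i<r$ and all $l$, and $H^r(\PP^r,\cO(l))=0$ precisely when $l\ge -r$, that is $l>-r-1$. Since these vanishings are uniform over $Y$ and $\pi$ is flat and proper, cohomology-and-base-change yields $R^i\pi_{*}\cO_X(l)=0$ in exactly the ranges of (ii). For the nonvanishing direct image in (i), the quotient $\pi^{*}\cE\twoheadrightarrow\cO_X(1)$ induces $\pi^{*}S^l\cE\twoheadrightarrow\cO_X(l)$ and, by adjunction, a morphism $S^l\cE\to\pi_{*}\cO_X(l)$ of locally free sheaves; on each fibre it is the canonical isomorphism $S^lV\xrightarrow{\ \sim\ }H^0(\PP(V),\cO(l))$, so being fibrewise an isomorphism it is an isomorphism, and both sides vanish for $l<0$.

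For (iii) I would invoke relative Serre duality for the smooth proper morphism $\pi$ of relative dimension $r$. The relative Euler sequence $0\to\Omega_{X/Y}\to(\pi^{*}\cE)(-1)\to\cO_X\to 0$ gives, on taking determinants, the relative dualising sheaf $\omega_{X/Y}\cong\cO_X(-r-1)\otimes\pi^{*}\det\cE$. Relative duality then reads $R^r\pi_{*}\cO_X(l)\cong(\pi_{*}(\cO_X(-l)\otimes\omega_{X/Y}))^{\vee}$, and the projection formula rewrites the right-hand side as $(\pi_{*}\cO_X(-l-r-1))^{\vee}\otimes(\det\cE)^{\vee}$. Since $\det\cE=c_1(\cE)$ as a line bundle, this is precisely the claimed formula (and it is consistent with (ii), both sides vanishing when $l>-r-1$).

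I expect the only real obstacle to be bookkeeping rather than mathematics: fixing the Grothendieck convention (with $\cO_X(1)$ a quotient of $\pi^{*}\cE$, so that $\pi_{*}\cO_X(l)=S^l\cE$ rather than $S^l\cE^{\vee}$) and then using the compatible forms of the relative canonical bundle and of relative duality. The one substantive point is justifying that the fibrewise vanishing and fibrewise isomorphism statements globalise, which is exactly the content of the base-change theorem and rests on the flatness and properness of $\pi$ together with the local freeness of the relevant direct images.
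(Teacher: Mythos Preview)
Your argument is correct and complete. The paper does not actually supply a proof of this proposition; it simply writes ``For details see \cite[Exercise 8.4, pg 252]{hartshorne:ag}'' and moves on. What you have written is precisely the standard solution to that exercise: fibrewise cohomology of $\cO_{\PP^r}(l)$ plus Grauert/base-change for (i) and (ii), and the relative Euler sequence together with relative Serre duality for (iii). There is nothing to compare against beyond noting that your write-up is what the cited exercise expects; the only places to tighten are the routine ones you already flag (making explicit that constancy of $h^i$ along fibres is what lets base-change fire, and that the adjunction map $\pi^*\pi_*M\to M$ is an isomorphism because both sides are line bundles and the map is fibrewise an isomorphism).
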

For details see \cite[Exercise 8.4, pg 252]{hartshorne:ag}.
 \begin{dfn}
Let $X\subset \PP^N$ be a smooth variety of dimension $n$  polarized by $H$ where $H$ is a hyperplane section of $X$.
A vector bundle $\cF$  on $X$ is said to be  {\it Ulrich} with respect to $H$ if
$$H^{i}(X, \cF(-jH))=0 \quad \text{for}  \,\, i=0, \cdots, n  \,\, \text{and} \,\, 1 \le j \le \di X. $$
 \end{dfn}

In the following Proposition we gather some of the properties of an Ulrich bundle $\cF$ that will be used throughout the paper,  see \cite{casanellas-hartshorne-geiss-schreyer}:

\begin{prop}\label{Ulrichproperties}
 Let $X\subset \PP^N$ be a smooth variety of dimension $n$  polarized by $H$ and let $\cF$ be an Ulrich bundle on $X$ with respect to $\cO_X(H)$. Then:
\begin{itemize}
\item[(i)] The restriction $\cF_{H}$  to a general hyperplane section $H$ of $X$ is again an Ulrich bundle.
\item[(ii)]  $h^{0}(X,\cF) =  rk(\cF)\cdot deg(X)$.
\item[(iii)] Ulrich bundles are $\mu$-semistable (equivalently semistable) with respect to the polarization $\cO_X (H)$. Moreover, if $\cF$ is strictly semistable, there exists an exact sequence of vector bundles:
    $$
    0\rightarrow\cE\rightarrow\cF\rightarrow\cG\rightarrow 0,
    $$
\noindent with $\cE$ and $\cG$ Ulrich bundles of lower rank.
\end{itemize}
\end{prop}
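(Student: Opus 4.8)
Statements (i) and (ii) follow from the definition by short cohomological manipulations, so I would treat them first and reserve the real work for (iii); throughout write $n=\di X$, $r=\rk\cF$, and let $H$ also denote a general smooth hyperplane section. For (i), I would tensor $0\to\cO_X(-H)\to\cO_X\to\cO_H\to 0$ with $\cF(-jH)$ to get
$$0\to\cF(-(j+1)H)\to\cF(-jH)\to\cF_H(-jH)\to 0.$$
For $1\le j\le n-1$ the two outer twists carry exponents in $\{1,\dots,n\}$, so all their cohomology vanishes because $\cF$ is Ulrich; the long exact sequence then forces $H^i(H,\cF_H(-jH))=0$ for every $i$ and every $1\le j\le n-1$, which is exactly the Ulrich condition on the $(n-1)$-fold $H$ (smoothness of $H$ is the only use of genericity).

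For (ii), I would first prove $H^i(X,\cF)=0$ for $i>0$ by induction on $n$: the sequence $0\to\cF(-H)\to\cF\to\cF_H\to 0$ with $H^\bullet(X,\cF(-H))=0$ gives $H^i(X,\cF)\cong H^i(H,\cF_H)$ for all $i$, and $\cF_H$ is Ulrich on $H$ by (i) (base case $n=0$ trivial). Hence $h^0(\cF)=\chi(\cF)=P(0)$, where $P(t)=\chi(\cF(tH))$ has degree $n$ with leading coefficient $r\deg X/n!$ by Riemann--Roch; the Ulrich vanishings give $P(-1)=\dots=P(-n)=0$, so $P(t)=\frac{r\deg X}{n!}(t+1)\cdots(t+n)$ and $P(0)=r\deg X$. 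This records the structural fact that every Ulrich sheaf has the \emph{same} reduced Hilbert polynomial $P_\cF(t)/r=\deg X\binom{t+n}{n}$.

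The substance is (iii). Comparing the $t^{n-1}$-coefficient of the factored $P$ with the one supplied by Riemann--Roch gives
$$\mu(\cF)=\frac{c_1(\cF)\cdot H^{n-1}}{r}=\tfrac12\bigl((n+1)\deg X+K_X\cdot H^{n-1}\bigr),$$
\emph{independent of $\cF$}. For $\mu$-semistability I would reduce to curves: iterating (i), the restriction $\cF_C$ to a general complete intersection curve $C=H_1\cap\cdots\cap H_{n-1}$ is Ulrich, and an Ulrich sheaf on a curve is semistable, since a subsheaf of $\cF_C(-H)$ of slope $>\mu(\cF_C(-H))=g(C)-1$ would have positive Euler characteristic, hence a section, contradicting $h^0(\cF_C(-H))=0$; as the $H$-slope of a subsheaf is unchanged by restriction to such $C$, semistability of $\cF_C$ forces that of $\cF$. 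Since all Ulrich sheaves share one reduced Hilbert polynomial, $\mu$- and Gieseker semistability coincide here, which is the parenthetical equivalence.

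For the last assertion I would take a saturated $\cF'\subset\cF$ of slope $\mu(\cF)$ with torsion-free quotient $\cF''$ and run the long exact sequence of $0\to\cF'(-jH)\to\cF(-jH)\to\cF''(-jH)\to 0$: using $H^\bullet(\cF(-jH))=0$ one gets $H^0(\cF'(-jH))=0$, $H^n(\cF''(-jH))=0$ and connecting isomorphisms $H^i(\cF''(-jH))\cong H^{i+1}(\cF'(-jH))$, so that it suffices to prove that one of the two factors is Ulrich. The main obstacle is exactly the vanishing of the intermediate groups collapsing this ladder: the Euler-characteristic identities $\chi(\cF'(-jH))=\chi(\cF''(-jH))=0$, which follow from the Jordan--Hölder factors of the Gieseker-semistable $\cF$ sharing its reduced Hilbert polynomial $\deg X\binom{t+n}{n}$, are not by themselves enough, and the genuine input is the finer stability analysis of \cite{casanellas-hartshorne-geiss-schreyer}, which I would invoke here. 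Once the factors are known to be Ulrich they are automatically locally free, an Ulrich sheaf on the smooth $X$ being maximal Cohen--Macaulay, hence free, at each point.
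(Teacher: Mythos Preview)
The paper does not supply its own proof of this proposition: it simply records the three statements and points to \cite{casanellas-hartshorne-geiss-schreyer}. Your write-up therefore already goes further than the paper, and the route you take---hyperplane restriction for (i), the forced factorisation $P_\cF(t)=\frac{r\deg X}{n!}(t+1)\cdots(t+n)$ for (ii), reduction to the curve case together with the observation that all Ulrich sheaves share one reduced Hilbert polynomial for the semistability in (iii), and the long-exact-sequence chase for the destabilising sub and quotient---is precisely the argument of the cited reference. Your explicit deferral to \cite{casanellas-hartshorne-geiss-schreyer} for the residual vanishing at the end of (iii) is appropriate and matches what the paper itself does wholesale; the final remark that an Ulrich sheaf on smooth $X$ is maximal Cohen--Macaulay, hence locally free, is the correct way to recover that $\cE$ and $\cG$ are bundles.
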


 \begin{rmk}
If $\cF_1$ is a vector bundle on $X$ which is Ulrich with respect to $H$  then
$\cF_2=\cF_1^{\vee}(K_X +(n+1)H)$ is also Ulrich with respect to $H$.  Indeed, we have
$$H^{i}(X,\cF_2(-jH))=H^{i}(X,\cF_1^{\vee}(K_X +(n+1-j)H))\cong H^{n-i}(\cF_1(-(n+1-j)H)=0 \,\, \text{for}  \,\,  i \le n  \,\,  \text{and} \,\,  1 \le j' \le \di X$$
where $j'=n+1-j$.

 From this we see that  Ulrich  bundles come in pairs.
\end{rmk}
 \begin{dfn} Let $X\subset \PP^N$ be a smooth variety of dimension $n$  polarized by $H$, where $H$ is a hyperplane section of $X$,    and let $\cF$  be  a rank $2$ Ulrich bundle  on  $X$. Then $\cF$ is said to be {\it special} if $c_1(\cF) = K_{X} +(n + 1)H.$
 \end{dfn}
It is interesting to notice that in the case of a surface $S$ the fact that a vector bundle $\cF$ is special Ulrich depends basically on its Chern classes. More specifically it holds (see \cite[Corollary 2.4]{casnati-ulrich}):
\begin{prop}\label{ulrichchernclassessurface}
  Let $(S,H)$ be a polarized surface. If $\cF$ is a vector bundle of rank $2$ on S, then the following assertions are equivalent:
\begin{itemize}
  \item $\cF$ is a special Ulrich bundle;
  \item $\cF$ is initialized (that is, $H^0(S, \cF(-H))=0$ and $H^0(S,\cF)\neq 0$) and
 \begin{equation}\label{chernclasses}
   c_1(\cF) = 3H + K_S, \quad\text{and}\quad c_2(\cF) =\frac{1}{2}(5H^2 + 3H K_S) + 2\chi(\cO_S).
 \end{equation}
\end{itemize}
\end{prop}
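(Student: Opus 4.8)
The plan is to prove the two implications separately, in both directions leaning on Hirzebruch--Riemann--Roch on the surface $S$, which for a rank two bundle $\cF$ reads
\[
\chi(\cF) = 2\chi(\cO_S) + \tfrac12\, c_1(\cF)\cdot\bigl(c_1(\cF) - K_S\bigr) - c_2(\cF).
\]
For the implication "special Ulrich $\Rightarrow$ initialized together with the prescribed Chern classes", the equality $c_1(\cF) = 3H + K_S$ is precisely the definition of \emph{special}, while \emph{initialized} is immediate: $H^0(S,\cF(-H)) = 0$ is the Ulrich vanishing in the case $i=0$, $j=1$, and $H^0(S,\cF)\neq 0$ follows from Proposition \ref{Ulrichproperties}(ii), which gives $h^0(\cF) = 2\deg(S) = 2H^2 > 0$. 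To pin down $c_2(\cF)$ I would feed $c_1(\cF) = 3H + K_S$ into Riemann--Roch applied to $\cF(-H)$: the Ulrich condition forces $\chi(\cF(-H)) = 0$, and solving the resulting linear equation for $c_2(\cF)$ returns exactly $\tfrac12(5H^2 + 3HK_S) + 2\chi(\cO_S)$.

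The substantive direction is the converse. Here I would first note that the Chern-class hypothesis makes $\cF$ special as soon as Ulrichness is established, so everything reduces to the six vanishings $H^i(\cF(-jH)) = 0$ for $i = 0,1,2$ and $j = 1,2$. The engine is the rank two self-duality $\cF^{\vee} \cong \cF(-c_1(\cF)) = \cF(-3H - K_S)$, which combined with Serre duality yields
\[
H^i\bigl(\cF(-jH)\bigr) \cong H^{2-i}\bigl(\cF((j-3)H)\bigr)^{\vee}.
\]
In particular the twists $\cF(-H)$ and $\cF(-2H)$ get exchanged: one gets $H^2(\cF(-2H)) \cong H^0(\cF(-H))^{\vee}$, $H^2(\cF(-H)) \cong H^0(\cF(-2H))^{\vee}$, and $H^1(\cF(-H)) \cong H^1(\cF(-2H))^{\vee}$.

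I would then run the vanishings in the following order. The hypothesis that $\cF$ is initialized gives $H^0(\cF(-H)) = 0$, whence $H^2(\cF(-2H)) = 0$ by the first isomorphism above. Multiplication by a nonzero section of $\cO_S(H)$ (which exists since $H$ is a hyperplane section) embeds $H^0(\cF(-2H))$ into $H^0(\cF(-H)) = 0$, so $H^0(\cF(-2H)) = 0$, and dualizing gives $H^2(\cF(-H)) = 0$. At this stage $H^0$ and $H^2$ of $\cF(-H)$ both vanish, hence $h^1(\cF(-H)) = -\chi(\cF(-H))$; but Riemann--Roch together with the prescribed $c_1$ and $c_2$ forces $\chi(\cF(-H)) = 0$ (the same computation as in the forward direction, read backwards), so $H^1(\cF(-H)) = 0$. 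Serre duality transports this to $H^1(\cF(-2H)) = 0$, and all six groups vanish.

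I expect the only genuine subtlety to lie in this converse: the Chern-class identities on their own never force cohomology to vanish, and the crux is that the rank two duality $\cF^{\vee}\cong\cF(-c_1)$ is what upgrades the single input "initialized" (namely $H^0(\cF(-H))=0$) into the complete Ulrich vanishing, with Riemann--Roch supplying only the one middle-cohomology value. I would be careful to confirm that a section of $\cO_S(H)$ is indeed available and to double-check the index bookkeeping in the twist $(j-3)H$ of the Serre-duality isomorphism, since an off-by-one there would collapse the argument.
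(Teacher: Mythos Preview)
Your proof is correct. The paper does not give its own proof of this proposition; it is quoted from \cite[Corollary 2.4]{casnati-ulrich} and used as a black box. Your argument supplies a clean self-contained justification: the forward direction is routine, and in the converse the key observation is that the rank-two isomorphism $\cF^{\vee}\cong\cF(-c_1(\cF))$ combined with $c_1(\cF)=3H+K_S$ makes Serre duality exchange the twists $\cF(-H)$ and $\cF(-2H)$, so the single hypothesis $H^0(\cF(-H))=0$ together with the Riemann--Roch value $\chi(\cF(-H))=0$ cascades into all six vanishings. Your index bookkeeping $H^i(\cF(-jH))\cong H^{2-i}(\cF((j-3)H))^{\vee}$ is correct, and the multiplication-by-a-section step is legitimate since in this paper the polarization $H$ is always a hyperplane class, hence effective.
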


\vspace{3mm}
We recall the definition of Lazarsfeld-Mukai bundles associated with a complete, base point free linear series on a smooth irreducible curve $C$ lying on a $K3$ surface.
 \begin{dfn}
Let $S$ be a $K3$ surface and  $C \subset  S$ be a smooth irreducible curve. If $A$ is a complete, base point free linear series of type $g^r_d $ on $C$, the Lazarsfeld-Mukai bundle $E_{C,A}$ associated with the pair $(C,A)$ is defined as the dual of the kernel of the evaluation map $ev: H^{0}(C,A)\otimes  \cO_{S}\twoheadrightarrow A$. Hence, $E_{C,A}$ sits in the short exact sequence
$$0\to H^{0}(C,A)^{\vee} \otimes \cO_{S} \to E_{C,A}  \to \omega_{C}\otimes A^{\vee} \to 0.$$
 \end{dfn}

%
%

\section{Generalities}
\label{generalities}

In this section we will prove some theorems about Ulrich vector bundles on projective bundles. We state them in a more general setting than what we need for our purpose.
\begin{thm}
\label{equivalent conditions}
  Let $(Y,H)$ be a polarized manifold of dimension $m$ with $H$ very ample and let $\cE$ be a rank $r+1$ vector bundle on $Y$ such that $\cE$ is (very) ample and spanned. Let $\cL:=a\xi+ \pi^{*}(D)$ be a Ulrich line bundle on the projective bundle $X:=\mathbb{P}(\cE) \xrightarrow{\pi} S$ for some $a\in\mathbb{Z}$ and $D\in Pic(Y)$; then, $a=0,1,\dots.. m$.  Moreover:
  \begin{itemize}
    \item[(i)] A line bundle $\cL_1=a\xi+\pi^{*}(D)\in Pic(X)$  is Ulrich (with respect to $\xi$) if and only if  $\cL_2:=(m-a)\xi+\pi^{*}(c_1(\cE)+K_Y-D)$ is Ulrich. In particular we only need to study line bundles $\cL=a\xi+\pi^{*}(D)$, with $D\in Pic(Y)$ and $\ulcorner\frac{m}{2}\urcorner\le a\leq m$.
    \item[(ii)] A line bundle $\cL=a\xi+\pi^{*}(D)$ with  $\ulcorner\frac{m}{2}\urcorner\le a\leq m$ is Ulrich (with respect to $\xi$) if and only if the following conditions hold:
    \begin{enumerate}
      \item[$(\alpha)$] $H^i(Y,S^{j}\cE(D))=0$ for $j=0,\dots a-1$ and $i=0,\dots m$.
      \item[$(\beta)$] $H^i(Y,S^{j}\cE^\vee(D-c_1(\cE))=0$ for $j=0, \dots m-a-1$ and $i=0,\dots m$.
    \end{enumerate}
  \end{itemize}
\end{thm}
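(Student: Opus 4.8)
The plan is to translate every cohomology group occurring in the Ulrich condition on $X=\PP(\cE)$ into a cohomology group on the base $Y$, via the Leray spectral sequence together with Proposition \ref{properties-taut-bundle}. The crucial observation is that for a twisted line bundle $t\xi+\pi^{*}(D)$ with $t\in\ZZ$, \emph{exactly one} of the sheaves $R^{q}\pi_{*}\cO_X(t)$ is nonzero: by Proposition \ref{properties-taut-bundle} it is $\pi_{*}\cO_X(t)=S^{t}\cE$ when $t\ge 0$, it is $R^{r}\pi_{*}\cO_X(t)\cong(S^{-t-r-1}\cE)^{\vee}\otimes c_1(\cE)^{\vee}$ when $t\le -r-1$, and \emph{all} direct images vanish in the intermediate range $-r\le t\le -1$. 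Hence for each fixed $t$ the spectral sequence has a single nonzero row and degenerates, yielding
$$
H^{i}(X,t\xi+\pi^{*}(D))\cong
\begin{cases}
H^{i}(Y,S^{t}\cE(D)) & t\ge 0,\\[2pt]
0 & -r\le t\le -1,\\[2pt]
H^{i-r}(Y,S^{-t-r-1}\cE^{\vee}(D-c_1(\cE))) & t\le -r-1,
\end{cases}
$$
where I use $(S^{s}\cE)^{\vee}\cong S^{s}\cE^{\vee}$, valid since we work over $\CC$.

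With this dictionary in hand, part (i) is immediate from the Remark preceding the statement. Writing $n=\di X=m+r$ and $K_X=-(r+1)\xi+\pi^{*}(K_Y+c_1(\cE))$, one checks that the Serre-dual companion is $\cL_1^{\vee}(K_X+(n+1)\xi)=(m-a)\xi+\pi^{*}(c_1(\cE)+K_Y-D)=\cL_2$, so $\cL_1$ is Ulrich if and only if $\cL_2$ is; since $a\mapsto m-a$ is an involution on $\{0,\dots,m\}$, this also justifies restricting attention to $\lceil m/2\rceil\le a\le m$. To pin down the range of $a$, I would use Proposition \ref{Ulrichproperties}(ii): an Ulrich line bundle has $h^{0}(X,\cL)=\de(X)>0$, while $H^{0}(X,\cL)=H^{0}(Y,\pi_{*}\cL)$ and $\pi_{*}\cL=(\pi_{*}\cO_X(a))\otimes\cO(D)=0$ whenever $a<0$; this forces $a\ge 0$. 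Applying the same bound to the companion $\cL_2$, whose $\xi$-coefficient is $m-a$, gives $m-a\ge 0$, hence $0\le a\le m$.

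For part (ii) I would unwind the Ulrich condition $H^{i}(X,(a-j)\xi+\pi^{*}(D))=0$ for all $i$ and $1\le j\le m+r$ through the displayed dictionary, setting $t=a-j$. As $j$ runs from $1$ to $m+r$ the exponent $t$ decreases from $a-1$ to $a-m-r$, and the three cases partition this range precisely: the values $j=1,\dots,a$ give $t=a-1,\dots,0\ge 0$ and produce exactly the vanishings $H^{i}(Y,S^{t}\cE(D))=0$ for $t=0,\dots,a-1$, i.e. condition $(\alpha)$; the values $j=a+1,\dots,a+r$ give $-r\le t\le -1$ and are vacuously satisfied; and the values $j=a+r+1,\dots,m+r$ give $t\le -r-1$, so with $s=-t-r-1$ they range over $s=0,\dots,m-a-1$ and produce exactly $H^{i}(Y,S^{s}\cE^{\vee}(D-c_1(\cE)))=0$, i.e. condition $(\beta)$. (When $a=m$ the last block of $j$'s is empty, matching the empty range $s=0,\dots,-1$ in $(\beta)$.) Reading the equivalence in both directions then finishes the proof.

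There is no deep obstacle here: once the single-nonzero-row phenomenon is isolated, the argument is a careful but essentially routine bookkeeping of the three $t$-ranges. The points demanding care are getting the twist in the relative duality formula (iii) exactly right (hence the $-c_1(\cE)$ shift and the reindexing $s=-t-r-1$), checking the boundary cases $a=0$ and $a=m$ where one of $(\alpha),(\beta)$ becomes vacuous, and matching the canonical-bundle computation so that $\cL_2$ appears precisely in the form stated in (i).
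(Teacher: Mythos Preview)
Your proposal is correct and follows essentially the same approach as the paper: both translate the Ulrich vanishings $H^{i}(X,(a-t)\xi+\pi^{*}D)=0$ into cohomology on $Y$ by splitting the range of $t=a-j$ into the three regimes $t\ge 0$, $-r\le t\le -1$, and $t\le -r-1$, using Proposition~\ref{properties-taut-bundle}. The only cosmetic difference is that in the range $t\le -r-1$ the paper reaches $H^{i-r}(Y,S^{-t-r-1}\cE^{\vee}(D-c_1(\cE)))$ via Serre duality on $X$, then pushforward, then Serre duality on $Y$, whereas you invoke the $R^{r}\pi_{*}$ formula (iii) directly; these are equivalent manipulations.
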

\begin{proof}
If a line bundle $\cL_1:=a\xi+ \pi^{*}(D)$ with $D\in Pic(Y)$  is Ulrich with respect to $\xi$  then $a \ge 0$ since $h^0(\cL_1)\neq 0$. Its partner $\cL_2=K_X +(dim X+1)\xi-\cL_1= (m-a)\xi +\pi^{*}(c_1(\cE)+K_Y-D)$ is also Ulrich with respect to $\xi$   and thus $m-a \ge 0$.

Item (i) is trivial.

Consider a line bundle $\cL=a\xi+ \pi^{*}(D)$ with $\ulcorner\frac{m}{2}\urcorner\le a\leq m$ and $D\in Pic(Y)$; we need to compute $H^{i}(\cL (-t\xi)= H^{i}((a-t)\xi + \pi^{*}(D))$ for $t=1, \cdots, dim X=m+r$ and for $i=0,\dots m+r$.

 If $0\le j:=a-t\le a-1$, then $H^{i}(X,(a-t)\xi + \pi^{*}(D))\cong H^{i}(Y, S^{j}\cE(D))$ for all $i$.

 If $-r\le j=a-t\le -1$, then $H^{i}(X,(a-t)\xi + \pi^{*}(D))=0$ since $\pi_{*}(a-t)\xi$ is the zero sheaf.

If $a-m-r\le j=a-t\le -r-1$ then $H^{i}(X,(a-t)\xi + \pi^{*}(D))\cong H^{m+r-i}(X,(t-a-r-1)\xi + \pi^{*}(c_1(\cE+K_Y-D))\cong H^{m+r-i}(Y, S^{-r-1-j}\cE(K_Y+c_1(\cE)-D))\simeq H^{i-r}(Y,S^{-r-1-j}\cE^\vee(D-c_1(\cE)))$ for all $i$.

 Thus $\cL=a\xi+ \pi^{*}(D)$ is Ulrich with respect to $\xi$  if and only if conditions $(a)$ and $(b)$ hold.
\end{proof}

Whenever it is clear from the contest,  in the notation  $\cL=a\xi+\pi^{*}(D)$ we will drop $\pi^{*}$ and simply write  $\cL=a\xi+D$.

The following immediate corollary holds for threefolds.
\begin{corol}
\label{equivalent conditions for line bdl}
  Let $(S,H)$ be a smooth  polarized surface with $H$ very ample and let $\cE$ be a rank two vector bundle on $S$ such that $\cE$ is (very) ample and spanned. If a line bundle $\cL_1:=a\xi+ D$ on the projective bundle $X:=\mathbb{P}(\cE)\xrightarrow{\pi} S$ is Ulrich, then $a=0,1,2$. Moreover:
  \begin{enumerate}
    \item A line bundle of the form $\cL_1:=\xi+D$ with $D\in Pic(S)$ is Ulrich (with respect to $\xi$) if and only if $H^i(S,D)=H^i(S,D-c_1(\cE))=0$ for $i=0,1,2$.
    \item A line bundle of the form $\cL_1:=2\xi+D$ with $D\in Pic(S)$ is Ulrich (with respect to $\xi$) if and only if $H^i(S,D)=H^i(S,\cE(D))=0$ for $i=0,1,2$.
    \item A line bundle of the form $\cL_1:=D$ with $D\in Pic(S)$ is Ulrich (with respect to $\xi$) if and only $\cL_2:=2\xi+c_1(\cE)+K_S-D$ is Ulrich.
      \end{enumerate}
\end{corol}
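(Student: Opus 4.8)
The plan is to obtain everything by specializing Theorem \ref{equivalent conditions} to the threefold case. Here the base $Y=S$ is a surface, so $m=2$, while $\cE$ has rank $r+1=2$, giving $r=1$ and $\di X = m+r = 3$. With these values the general bound $a=0,1,\dots,m$ from the theorem reads $a\in\{0,1,2\}$, which is precisely the first assertion.

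For item (3) I would simply quote part (i) of Theorem \ref{equivalent conditions} in the case $a=0$. Indeed, the partner of $\cL_1=D$ is $\cL_2=(m-a)\xi+\pi^{*}(c_1(\cE)+K_S-D)=2\xi+c_1(\cE)+K_S-D$, and part (i) asserts exactly that $\cL_1$ is Ulrich if and only if $\cL_2$ is.

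Items (1) and (2) then follow from part (ii). Since $\lceil m/2\rceil = 1$ when $m=2$, the hypothesis $\lceil m/2\rceil\le a\le m$ of part (ii) becomes $1\le a\le 2$, so it covers both remaining cases, and it only remains to unwind the vanishing conditions $(\alpha)$ and $(\beta)$. For $a=1$, condition $(\alpha)$ runs over $j=0$ alone and, using $S^{0}\cE\cong\cO_S$, gives $H^{i}(S,D)=0$; condition $(\beta)$ likewise runs over $j=0$ alone (as $m-a-1=0$) and, using $S^{0}\cE^{\vee}\cong\cO_S$, gives $H^{i}(S,D-c_1(\cE))=0$. For $a=2$, condition $(\alpha)$ runs over $j=0,1$ and yields both $H^{i}(S,D)=0$ and, via $S^{1}\cE\cong\cE$, the vanishing $H^{i}(S,\cE(D))=0$; condition $(\beta)$ is vacuous, since its index range $j=0,\dots,m-a-1=-1$ is empty. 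This matches the stated conditions in (1) and (2).

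The argument is entirely a matter of substitution, so there is no genuine obstacle: the only point requiring care is the bookkeeping of the index ranges in $(\alpha)$ and $(\beta)$ together with the identifications $S^{0}\cE\cong\cO_S$, $S^{0}\cE^{\vee}\cong\cO_S$ and $S^{1}\cE\cong\cE$.
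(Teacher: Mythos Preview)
Your proof is correct and follows exactly the paper's approach: the paper's proof consists of the single sentence ``Just apply Theorem \ref{equivalent conditions},'' and you have simply unwound that application explicitly. The bookkeeping of the index ranges in $(\alpha)$ and $(\beta)$ and the identifications $S^0\cE\cong\cO_S$, $S^1\cE\cong\cE$ are exactly right.
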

\begin{proof}
Just apply Theorem \ref{equivalent conditions}.
\end{proof}

Line bundles as in $(1)$ (respectively, $(2)$) are said to be of type $(1)$  (respectively, type $(2)$).
\vspace{3mm}

\begin{rmk}
 For $m=1$, we recover \cite[Prop. 5]{beauville-ulrich-intro}. Moreover, notice that on the case of geometrically ruled surfaces, if $\cL=\xi+\pi^{*}(D)$ is Ulrich, we know  that the restriction of $\cL$ to a curve $C\in|\xi|$ is also Ulrich, see Proposition \ref{Ulrichproperties},  and thus $\chi((\cL-\xi)\otimes \mathcal{O}_{\xi})=0$. An easy application of Riemann Roch and adjunction gives that $D\equiv (g_Y-1)\mathfrak{f}$ numerically. So we also recover \cite[Theo. 2.1]{aprodu-costa-miro}.
\end{rmk}

Concerning rank $r$ Ulrich bundles on projective bundles we have the following result.

\begin{thm}
\label{pullback}
  Let $(S,H)$ be a polarized surface with $H$ very ample and let $\cE$ be a rank two vector bundle on $S$ such that $\cE$ is (very) ample and spanned. Let $\cF$ be a rank $r$ vector bundle satisfying:
\begin{equation}
\begin{array}{ccc}
  H^i(S,\cF)=0 & \text{and} & H^i(S,\cF(-c_1(\cE)))=0,
\end{array}
\end{equation}
\noindent for $i=0,1,2$. Then on the projective bundle  $X:=\mathbb{P}(\cE)\xrightarrow{\pi} S$, the vector bundle $\cG:=\pi^*\cF\otimes \xi$ is Ulrich with respect to $\xi$.
\end{thm}
\begin{proof}
 $\cG$ will be Ulrich with respect to $\xi$ if the following vanishings are satisfied, for $i=0,1,2,3$:
\begin{equation}
 \begin{array}{l}
   1) \,\, 0=H^i(X,\pi^*\cF)\cong H^i(S,\cF), \\
   2) \,\,  0=H^i(X,-\xi\otimes\pi^*\cF), \\
   3) \,\, 0=H^i(X,-2\xi\otimes\pi^*\cF)\cong H^{3-i}(X,\pi^*\cF^{\vee}(c_1(\cE)+K_S))\cong H^{i-1}(S,\cF(-c_1(\cE))).

 \end{array}
\end{equation}
The vanishing in $2)$ are true since $\pi_{*}(-\xi)$ is the zero sheaf, so clearly the theorem holds.
\end{proof}

 \begin{rmk} We like to point out that Theorem \ref{pullback} is an generalization of \cite[Prop. 5, 2)]{beauville-ulrich-intro} to the case in which the base of the scroll is a surface and the  bundle on the base surface  with $H^{\bullet}(\cF)=0$  has rank $2$.
\end{rmk}

%

%
%
\section{Scroll over a $K3$ surface}
\label{Scroll over a $K3$ surface}

We start with the case of a $3$-dimensional scroll $X\subset \PP^5$ over a $K3$ surface $S$  that is a transversal intersection of the $8$-dimensional Grassmannian $\mathbb{G}(1, 5) \subset \PP^{14}$ with a $\mathbb{P}^8\subset \PP^{14}$.

 We consider first the case in which  $\mathrm{Pic}(S)$ has rank $1$. We will see that in this case the $3$-dimensional scroll   $X$ does not support any  Ulrich line bundle, while it carries  stable Ulrich bundles of rank $2$.

\begin{thm}
\label{Ulrich on general K3scroll}
  Let $X:=\PP(\cE)\subset\PP^5$ be  a $3$-dimensional scroll over a $K3$ surface $S$  such that $Pic(S)=\ZZ[H]$ with  $H$ a hyperplane section of $S$.
Then $X$ does not support  Ulrich line bundles. However,  $X$  supports stable  Ulrich bundles $\cG$ of rank $2$.
   \end{thm}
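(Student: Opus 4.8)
The plan is to prove the two assertions in turn, obtaining the stability of the rank-two bundle for free from the non-existence of Ulrich line bundles. First I would record the numerical data. Since $S=\mathbb{G}(1,5)\cap\PP^8$ is the genus-$8$ Mukai model of a K3 surface, one has $H^2=14$ and $\cE$ is (the restriction of) the corresponding Mukai bundle, so $c_1(\cE)=H$; combining $\deg X=\xi^3=9$ with the Grothendieck relation $\xi^3=c_1(\cE)^2-c_2(\cE)$ gives $c_2(\cE)=5$. The only computation I really need is that, as $\mathrm{Pic}(S)=\ZZ[H]$, every line bundle on $S$ has the form $dH$ and satisfies $\chi(\cO_S(dH))=2+7d^2>0$; in particular no line bundle on $S$ is acyclic.

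For the first assertion I would apply Corollary \ref{equivalent conditions for line bdl}: an Ulrich line bundle on $X$ must be of type $(1)$ or of type $(2)$ (the case $a=0$ reducing to type $(2)$ through the duality of item $(3)$). In both cases the criterion forces $H^i(S,D)=0$ for all $i$, with $D=dH$; since $\chi(\cO_S(dH))\neq 0$ for every $d$, this cannot happen, and so $X$ supports no Ulrich line bundle.

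For the second assertion I would use Theorem \ref{pullback}. It is enough to exhibit a rank-two bundle $\cF$ on $S$ with $H^{\bullet}(S,\cF)=H^{\bullet}(S,\cF(-c_1(\cE)))=0$; writing $\cF=\cF_0(-H)$, this is exactly the requirement that $\cF_0$ be a rank-two Ulrich bundle on $(S,H)$ in the usual sense (so $H^{\bullet}(\cF_0(-H))=H^{\bullet}(\cF_0(-2H))=0$). Such a bundle exists on every polarized K3 surface: because $\mathrm{Pic}(S)=\ZZ[H]$ the general curve $C\in|H|$ is Brill--Noether general, and a rank-two special Ulrich bundle with $c_1=3H$ and $c_2=39$ is produced from a Lazarsfeld--Mukai bundle attached to a suitable pencil on $C$ (cf.\ \cite{aprodu-farkas-ortega}). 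Theorem \ref{pullback} then shows that $\cG:=\pi^{*}(\cF_0(-H))\otimes\xi$ is a rank-two Ulrich bundle on $X$.

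Stability of $\cG$ is now formal: by Proposition \ref{Ulrichproperties}(iii) every Ulrich bundle is semistable, and a strictly semistable rank-two Ulrich bundle would fit into an extension of two Ulrich line bundles. Since the first assertion shows that $X$ has no Ulrich line bundle, $\cG$ cannot be strictly semistable and is therefore stable. The main obstacle is thus the existence step, namely producing the rank-two Ulrich bundle $\cF_0$ on this specific surface $S$ and checking the precise vanishings; the non-existence of Ulrich line bundles and the passage to stability are purely formal. I expect the verification of the cohomology vanishings for the Lazarsfeld--Mukai bundle to be exactly where the Brill--Noether generality of $C\in|H|$, guaranteed by $\mathrm{Pic}(S)=\ZZ[H]$, is genuinely used.
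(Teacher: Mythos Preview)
Your proposal is correct and follows essentially the same route as the paper: both rule out Ulrich line bundles by observing that Corollary~\ref{equivalent conditions for line bdl} forces $\chi(\cO_S(D))=0$, which is impossible on a K3 surface with $\mathrm{Pic}(S)=\ZZ[H]$ (the paper phrases this as $D^2=-4$, you as $\chi(dH)=2+7d^2>0$, which are the same Riemann--Roch computation), and both obtain the rank-two Ulrich bundle by pulling back a special Ulrich bundle on $(S,H)$ via Theorem~\ref{pullback}, deducing stability formally from the absence of Ulrich line bundles.
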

\begin{proof} By Corollary \ref{equivalent conditions for line bdl}, a line bundle $L=2\xi+D$ on $X$ is Ulrich if and only if
\begin{eqnarray*}
H^i(S, D)=0 \quad \text{for} \quad  i\ge 0  \quad \text{and}  \quad H^i(S, \cE(D))=0 \quad \text{for}\quad  i\ge 0.
\end{eqnarray*}

Similarly,  a line bundle $L=\xi+D$ on $X$ is  Ulrich if and only if
\begin{eqnarray*}
H^i(S, D)=0 \quad \text{for} \quad  i\ge 0  \quad \text{and}  \quad H^i(S, c_1(\cE)-D)=0 \quad \text{for}\quad  i\ge 0.
\end{eqnarray*}

Note that $\chi(S, D)=0$ implies that $D^2=-4$. But since  $\Pic(S)=\ZZ[H]$  there are no divisors $D$ with negative self-intersection.

Let us prove now that $X$ supports rank two Ulrich bundles. First of all, since $\Pic S= \ZZ[H]$ ,  by \cite{aprodu-farkas-ortega, faenzi}  $S$ supports special Ulrich bundles.
Let $\cF'$ be a special Ulrich bundle of rank $2$ on $S$. In particular, $c_1(\cF')=3H$. Then we can apply Theorem \ref{pullback} to $\cF:=\cF'(-H)$ (once it is observed that $c_1(\cF)=H$) to obtain rank two Ulrich bundle $\cG$ on $X$. Note that $\cG$ should be stable, since otherwise, by Proposition \ref{Ulrichproperties}, $\cG$ would be an extension of Ulrich line bundles contradicting the first part of the statement.
\end{proof}

We are now going to show the existence of a $3$-fold $X\subset\PP^5$ which is a scroll over a $K3$ surface $S$ with $Pic(S)=\ZZ[H]\oplus \ZZ[C]$ for some appropriate $H$ and $C$ such that $X$ carries both Ulrich line bundles and stable Ulrich bundles of rank two.

By \cite[Prop. 4.2, Theorem 4.6]{knutsen}, given integers $n\ge 2$, $d\ge 1$, $g \ge 0$ such that $d^2-4ng >0$ and $(d,g)\neq (2n+1,n+1)$, there exists a $K3$ surface $S$ of degree $2n$ in $\PP^{n+1}$ containing a smooth curve $C$ of degree $d$ and genus $g$. Furthermore, one can find such an $S$ with  $\mathrm{Pic}(S)=\ZZ[H]\oplus \ZZ[C]$ where $H$ is a hyperplane section of $S$, $H^2=2n$, $H\cdot C= d$, $C^2=2(g-1)$. We are interested in the case where $n=7$, $g=8$, $d=16$.

 \begin{lem}
 Let $S$ be a $K3$ surface as above having $\mathrm{Pic}(S)=\ZZ[H]\oplus \ZZ[C]$ with $H^2=C^2=14$ and $H\cdot C=16$. Then the following holds:
 \begin{itemize}
 \item[(i)] $S$ contains neither smooth rational curves nor smooth elliptic curves;
 \item[(ii)]  both $H$ and $C$ are indecomposable.
  \end{itemize}
 \end{lem}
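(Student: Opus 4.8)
The plan is to exploit the fact that the Picard lattice is explicitly given by the Gram matrix
\[
\begin{pmatrix} H^2 & H\cdot C \\ H\cdot C & C^2 \end{pmatrix}=\begin{pmatrix} 14 & 16 \\ 16 & 14 \end{pmatrix},
\]
and to combine this with the standard Riemann--Roch theorem on a $K3$ surface. Recall that on a $K3$ surface $S$ one has $K_S=0$ and $\chi(\cO_S)=2$, so for any divisor class $D$ Riemann--Roch reads
\[
\chi(S,\cO_S(D))=\frac{D^2}{2}+2.
\]
A smooth rational curve $R$ satisfies $R^2=-2$, while a smooth elliptic curve $E$ satisfies $E^2=0$. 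So for part (i) it suffices to show that no class in $\Pic(S)=\ZZ[H]\oplus\ZZ[C]$ can be represented by an irreducible curve of self-intersection $-2$ or $0$.

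First I would write an arbitrary class as $D=xH+yC$ with $x,y\in\ZZ$ and compute
\[
D^2=14x^2+32xy+14y^2.
\]
The key observation is that this is a binary quadratic form whose discriminant is
\[
16^2-14^2=256-196=60>0,
\]
so the form is indefinite and does genuinely take negative values; hence I cannot simply rule out $-2$ by positivity, and a genuine integrality argument is needed. For the elliptic case I would analyze $14x^2+32xy+14y^2=0$; since $\gcd(14,32,14)=2$ this becomes $7x^2+16xy+7y^2=0$, and viewing it as a quadratic in $x$ over $\QQ$ forces the discriminant $256y^2-196y^2=60y^2$ to be a perfect square times $49$, i.e. $60$ would have to be a perfect square, which it is not; hence the only integer solution is $x=y=0$, ruling out self-intersection $0$ entirely. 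For the $(-2)$-case I would solve $7x^2+16xy+7y^2=-1$ modulo a small integer: reducing modulo $7$ gives $2xy\equiv -1\equiv 6\pmod 7$, i.e. $xy\equiv 3\pmod 7$, which is consistent, so I would instead reduce the full equation $14x^2+32xy+14y^2=-2$ modulo $7$ to get $4xy\equiv -2\equiv 5\pmod 7$; more cleanly, I expect the cleanest contradiction to come from reducing $7(x^2+y^2)+16xy=-1$ modulo a suitable modulus (e.g. mod $4$: $16xy\equiv 0$, so $7(x^2+y^2)\equiv -1\equiv 3\pmod 4$, i.e. $3(x^2+y^2)\equiv 3$, i.e. $x^2+y^2\equiv 1\pmod 4$, which is satisfiable) — so modular reduction alone may not close it, and I would fall back on the discriminant criterion: the form represents $-2$ only if $60$ is a square modulo the relevant conductor, and more robustly I would use that a primitive representation of $-2$ by a form of discriminant $60$ forces $-2$ to be a square residue pattern incompatible with $\disc=60$. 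This integrality analysis of the indefinite form is where I expect the main obstacle to lie, and I would handle it by a clean descent/congruence argument rather than ad hoc case-checking.

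For part (ii), indecomposability of $H$ (and symmetrically of $C$, since the lattice is symmetric under swapping $H$ and $C$) means that $H$ cannot be written as $D_1+D_2$ with both $D_i$ effective and nonzero. The natural strategy is: if $H=D_1+D_2$ with $D_i=x_iH+y_iC$ effective, then by part (i) no irreducible component is rational or elliptic, so each effective class $D_i$ with positive self-intersection corresponds to a curve of arithmetic genus at least $2$, hence $D_i^2\ge 2$; together with $D_i\cdot H\ge 0$ and $H^2=14$, I would derive numerical constraints $D_1^2+D_2^2+2D_1\cdot D_2=14$ and bound the possibilities. I would use the Hodge index theorem, which on the rank-two lattice says that since $H^2=14>0$, any class $D$ orthogonal to $H$ has $D^2\le 0$, and more usefully that the intersection form restricted to effective classes is controlled; concretely, the main input is that an effective decomposition of an ample (or at least big and nef) $H$ into two effective pieces with the given lattice is obstructed because the resulting Diophantine system $14x_1^2+32x_1y_1+14y_1^2\ge 2$, $14x_2^2+32x_2y_2+14y_2^2\ge 2$, $(x_1+x_2,y_1+y_2)=(1,0)$, and cross-term constraints have no solution in the effective cone. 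I would finish by identifying the effective cone of $S$ using its two boundary rays (determined by the two null or near-null directions of the form) and checking that $H$ lies in the interior and is primitive, so it cannot split. The principal difficulty throughout is purely arithmetic — controlling an indefinite integral binary quadratic form of discriminant $60$ — and I would expect the cleanest writeup to isolate one lemma on which integers this form represents, then feed that into both (i) and (ii).
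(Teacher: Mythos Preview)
Your approach is exactly the paper's: compute the binary quadratic form $q(x,y)=14x^2+32xy+14y^2$ on $\Pic(S)$ and rule out the values $-2$ and $0$ for (i), then use (i) plus Hodge index for (ii). The elliptic case is fine (discriminant $60$ is not a square, so $q(x,y)=0$ has no nonzero integer solutions). The gap is your treatment of the $(-2)$-case. You tried reductions mod $7$ and mod $4$, found they do not obstruct, and then retreated to a vague ``discriminant criterion'' that is not an actual argument. The modulus you want is $3$: since $7\equiv 1$ and $16\equiv 1\pmod 3$, the equation $7x^2+16xy+7y^2=-1$ reduces to $x^2+xy+y^2\equiv 2\pmod 3$, but $x^2+xy+y^2$ only takes the values $0,1\pmod 3$. (Reduction mod $5$ also works and, as a bonus, excludes $q=2$ and $q=4$.) This is a two-line computation, not a descent.

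For (ii) your sketch is in the right direction but misses the decisive numerical input. Once (i) is established, any effective divisor on $S$ is ample, so a decomposition $H=N+M$ into effective pieces gives $N^2,M^2>0$ and $N\cdot M>0$. The mod $3$ and mod $5$ obstructions above show that $q$ does not represent $2$ or $4$, so the smallest positive value of $q$ is $6$ (attained at $2H-C$). Then $14=N^2+M^2+2N\cdot M$ forces $N^2=M^2=6$ and $N\cdot M=1$, which violates the Hodge index inequality $(N\cdot M)^2\ge N^2M^2$. You mention Hodge index, but without the bound $q\ge 6$ the case analysis you propose (``identify the effective cone, check $H$ is primitive in the interior'') is unnecessarily elaborate; the paper closes it in two lines exactly as above.
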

 \begin{rmk}
 By \cite{SD}, item (i) yields that every primitive effective divisor on $S$ is very ample. By Lazarsfeld's Theorem \cite{La} along with item (ii), general curves in $|H|$ or $|C|$ are Brill-Noether-Petri general. In particular, their gonality is $k=5$, their Clifford dimension is $1$ and their Clifford index equals $k-2=3$.
 \end{rmk}
 \begin{proof}
 Let $D\equiv aH+bC$ with $a,b\in\ZZ$ be any divisor on $S$. Then $D^2=14a^2+14b^2+32ab=q(a,b)$ and (i) follows because neither $-2$ nor $0$ are represented by the quadratic form $q$ over the integers. Let $H\equiv N+M$ be any decomposition of $H$ into effectivedivisors. Then $14=N^2+M^2+2N\cdot M$. By (i), both $N$ and $M$ are ample and hence $N^2>0$, $M^2>0$ and $N\cdot M>0$. One gets a contradiction since the first positive even integer represented by $q$ is $6$ and the only possibility $N^2=M^2=6$, $N\cdot M=1$ is excluded by the Hodge Index Theorem. The same computation works for $C$.
 \end{proof}

 Since a general element $B\in |H|$ has no $g_7^2$, then it is a transversal linear section of the 8-dimensional Grassmannian $\mathbb{G}(1, 5) \subset \PP^{14}$ by \cite[Main Theorem A ]{mukai:curves-grassmannians}.

 Let $\zeta$ be a  $g_5^1$ on $B$  and consider the Lazarsfeld-Mukai bundle $E_{B, \zeta}$ on $S$ associated with the pair $(B, \zeta)$, sitting in the short exact sequence
  \begin{eqnarray}
  \label{LM_bdl on surf}
 0 \to H^{0}(B, \zeta)^{*} \otimes \cO_{S} \to  E_{B,\zeta} \to \omega_B\otimes \zeta^{-1}  \to 0.
  \end{eqnarray}
It is standard to show that the restriction $E_B$ of $E_{B,\zeta}$ to the curve $B$ sits in the short exact sequence
 \begin{eqnarray}
 \label{Mukaibdl on curve}
 0 \to \zeta \to E_B \to \omega_B\otimes \zeta^{-1}\to 0;
 \end{eqnarray}
such an extension is unique by \cite[\S 3]{mukai:curves-grassmannians}.

Standard computations show that $h^0(S,E_{B,\zeta})=6$.  Since  $\omega_B\otimes \zeta^{-1}$ is base point free, the  Lazarsfeld-Mukai bundles $E_{B,\zeta}$  is globally generated and thus provides a morphism $\varphi: S \to \GG(1,5)$.

\begin{claim}  $E_{B,\zeta}$ is very ample.
\end{claim}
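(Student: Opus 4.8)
The plan is to prove very ampleness of the Lazarsfeld--Mukai bundle $E_{B,\zeta}$ by analyzing the morphism $\varphi\colon S\to\GG(1,5)$ it induces, and showing that it separates points and tangent vectors. Since $h^0(S,E_{B,\zeta})=6$ and the bundle is globally generated, $\varphi$ sends a point $s\in S$ to the line in $\PP^5=\PP(H^0(S,E_{B,\zeta})^\vee)$ spanned by the image of the fiber $(E_{B,\zeta})_s$. Very ampleness of $E_{B,\zeta}$ is equivalent to the composite $S\xrightarrow{\varphi}\GG(1,5)\hookrightarrow\PP^{14}$ (Pl\"ucker) being an embedding, but more directly I would check the two standard criteria: for every pair of distinct points $s_1,s_2\in S$ the evaluation $H^0(S,E_{B,\zeta})\to (E_{B,\zeta})_{s_1}\oplus(E_{B,\zeta})_{s_2}$ is surjective, and for every $s$ and tangent vector the first-order analogue (surjectivity onto $(E_{B,\zeta})_s\otimes\cO_S/\mathfrak{m}_s^2$) holds.

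First I would translate these surjectivity statements into cohomology vanishings. Separation of the points $s_1,s_2$ fails precisely when $H^1(S,E_{B,\zeta}\otimes\mathcal{I}_{\{s_1,s_2\}})\neq 0$ obstructs the evaluation, and separation of tangent vectors fails when $H^1(S,E_{B,\zeta}\otimes\mathcal{I}_{s}^2)$ (with the appropriate length-two scheme structure) obstructs it; so the goal reduces to showing $H^1(S,E_{B,\zeta}\otimes\mathcal{I}_Z)=0$ for every length-two zero-dimensional subscheme $Z\subset S$. The natural tool here is the defining sequence (\ref{LM_bdl on surf}) together with the rank-one structure of $E_{B,\zeta}$ along $B$. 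I would twist (\ref{LM_bdl on surf}) by $\mathcal{I}_Z$ and take the long exact sequence, reducing the vanishing of $H^1(E_{B,\zeta}\otimes\mathcal{I}_Z)$ to controlling $H^1(S,\mathcal{I}_Z)$ (which vanishes since $\cO_S$ has no higher cohomology issues in this range) and $H^1(S,(\omega_B\otimes\zeta^{-1})\otimes\mathcal{I}_Z)$, i.e.\ essentially a statement about the very ampleness of the line bundle $\omega_B\otimes\zeta^{-1}$ on $B$ and about the positions of the points of $Z$ relative to $B$.

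The key geometric input comes from the Remark preceding the claim: by the Lemma, $S$ contains no smooth rational or elliptic curves, so by Saint-Donat's theorem every primitive effective divisor on $S$ is very ample, and general curves in $|H|$ are Brill--Noether--Petri general with Clifford index $3$ and gonality $5$. The pencil $\zeta=g_5^1$ computes the gonality and $\omega_B\otimes\zeta^{-1}$ is base-point-free of the complementary degree; by Brill--Noether--Petri generality the Lazarsfeld--Mukai bundle $E_{B,\zeta}$ is rigid and its restriction $E_B$ in (\ref{Mukaibdl on curve}) is a stable (indeed a Mukai) bundle whose associated map realizes $B$ as a linear section of $\GG(1,5)$. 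I would exploit this by arguing that failure of very ampleness of $E_{B,\zeta}$ would force the existence of an effective divisor class on $S$ of a prohibited type---typically a class $D$ with $D^2\in\{-2,0\}$ or a $(-2)$-curve, or a destabilizing sub-line-bundle of $E_{B,\zeta}$---and then invoke the Lemma to rule this out, since the quadratic form $q(a,b)=14a^2+14b^2+32ab$ represents neither $-2$ nor $0$.

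The main obstacle I anticipate is the separation of points lying on the \emph{same} fiber of $\varphi$, equivalently ruling out the possibility that two distinct points $s_1,s_2$ are mapped to the same line, or that $\varphi$ contracts a curve. This is where positivity of $E_{B,\zeta}$ must be used most carefully: I would need to show that the restriction of $E_{B,\zeta}$ to any curve on $S$ remains sufficiently positive (no trivial or negative sub-quotients), which reduces to a Bogomolov-type stability argument for $E_{B,\zeta}$ combined with the absence of low-self-intersection classes guaranteed by the Lemma. The Petri-generality of the general curve in $|H|$, ensuring that $\zeta$ is the \emph{unique} $g_5^1$-type pencil and hence that $E_{B,\zeta}$ is the expected rigid bundle, is the technical heart that makes this positivity control possible; I expect the cleanest route is to show $E_{B,\zeta}$ is $H$-stable with $c_1(E_{B,\zeta})=H$ and $c_2(E_{B,\zeta})$ minimal, and then deduce very ampleness from general very-ampleness results for such stable bundles on $K3$ surfaces of Picard rank two.
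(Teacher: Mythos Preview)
Your opening move---twist the Lazarsfeld--Mukai sequence $0\to\cO_S^{\oplus 2}\to E_{B,\zeta}\to\omega_B\otimes\zeta^{-1}\to 0$ by the ideal sheaf of a length-two subscheme $Z=\{p,q\}$ and read off cohomology---is exactly the paper's approach. But your execution contains an error and then drifts into machinery that is not needed.

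The error is the claim that $H^1(S,\cI_Z)=0$. For any length-two $Z\subset S$ on a $K3$ surface one has $h^1(S,\cI_Z)=1$, since $h^0(\cO_S)=1<2=h^0(\cO_Z)$ in $0\to\cI_Z\to\cO_S\to\cO_Z\to 0$. So the $H^1$-vanishing route does not close up from the twisted sequence alone. The paper avoids this by working at the level of $H^0$: the twisted sequence gives
\[
h^0(E_{B,\zeta}\otimes \cI_Z)\ \le\ h^0(\cI_Z^{\oplus 2})+h^0\bigl(\omega_B\otimes\zeta^{-1}(-p-q)\bigr)\ =\ 0+h^0\bigl(\omega_B\otimes\zeta^{-1}(-p-q)\bigr),
\]
and the right-hand side equals $2$ precisely because $B$ carries no $g^2_7$ (this is the single Brill--Noether input, $\rho(8,2,7)<0$). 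Together with the automatic lower bound $h^0(E_{B,\zeta}\otimes \cI_Z)\ge 6-4=2$, the argument is finished in one line. Your phrase ``very ampleness of $\omega_B\otimes\zeta^{-1}$'' is exactly this condition, but you never isolate the no-$g^2_7$ statement as the decisive fact.

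Everything in your last two paragraphs---$\mu_H$-stability, Bogomolov-type bounds, curves contracted by $\varphi$, the Picard form not representing $-2$ or $0$---plays no role here; stability of $E_{B,\zeta}$ is proved in a separate Lemma for later use, not for this claim. Your concern about points of $Z$ not lying on $B$ is the one genuinely delicate issue: the displayed bound only yields $2$ when $p,q\in B$. The (implicit) fix is that $E_{B,\zeta}$ is rigid, so for any $p,q\in S$ one may replace $(B,\zeta)$ by a pair $(B',\zeta')$ with $p,q\in B'\in|H|$ and $E_{B',\zeta'}\cong E_{B,\zeta}$, and run the same computation.
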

\begin{proof}
It is enough to show that for any $p,q\in S$

 $$\di H^{0}(S, E_{B,\zeta} \otimes I_{p+q}) = \di H^{0}(S, E_{B,\zeta})-4=2.$$
 The sequence \eqref{LM_bdl on surf} remains exact when tensored by $I_{p+q}$, and thus we get
  \begin{eqnarray}
  \label{LM_bdl on surf}
 0 \to H^{0}(B, \zeta)^{*} \otimes I_{p+q} \to  E_{B,\zeta}\otimes I_{p+q}  \to \omega_B\otimes \zeta^{-1}\otimes I_{p+q}   \to 0
  \end{eqnarray}

Note that $ \di H^{0}(H^{0}(B, \zeta)^{*} \otimes I_{p+q})=0$ and $\di H^{0}(B,\omega_B\otimes \zeta^{-1}\otimes I_{p+q})= 2$ since $B$ does not have any   $g_7^2$. It follows that $\di H^{0}(S, E_{B,\zeta} \otimes I_{p+q})= 2$.
\end{proof}

The very ampleness of $E_{B,\zeta}$ implies that $\varphi$ is an embedding but is actually stronger. Geometrically it means that any two lines in $\PP^5$ (even those "infinitely close") do not meet and thus the variety $\PP( E_{B,\zeta})$ is embedded in $\PP^5$ by the tautological line bundle $\xi=\cO_{\PP( E_{B,\zeta})}(1)$.  Also note that  $E_{B,\zeta}= \varphi^{*} (\mathscr{S})$, where $\mathscr{S}$ is the rank $2$ universal bundle on $\GG( 1, 5)$.

We will now prove that on the variety $\PP( E_{B,\zeta})$ there is  a Ulrich line bundles of type (2) with respect to the tautological bundle $\xi$. We need the following:

\begin{lem}
In the above situation, assume that $B\in |H|$ is general. Then the Lazarsfeld-Mukai bundle $E_{B,\zeta}$ is both $\mu_H$-stable and $\mu_C$-stable.
\end{lem}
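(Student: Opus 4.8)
The goal is to show that the Lazarsfeld-Mukai bundle $E_{B,\zeta}$ is both $\mu_H$-stable and $\mu_C$-stable for general $B\in|H|$. I would begin by recording the numerical invariants. From the defining sequence \eqref{LM_bdl on surf} one has $\rk E_{B,\zeta}=2$, $c_1(E_{B,\zeta})=H$ (since $\omega_B\otimes\zeta^{-1}$ has class $H$ restricted to $B$, and the trivial summand contributes nothing to $c_1$), and $c_2$ can be computed from $\chi(E_{B,\zeta})$ via Riemann-Roch on the $K3$ surface together with $h^0=6$; this gives $c_2(E_{B,\zeta})=\deg\zeta=5$. Because $c_1=H$ is primitive in $\mathrm{Pic}(S)=\ZZ[H]\oplus\ZZ[C]$, a destabilizing sub-line-bundle would have to have a specific slope, which will make the stability analysis essentially a search over a discrete set of divisor classes.

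\emph{Stability argument.} Since $E_{B,\zeta}$ has rank $2$, to prove $\mu_L$-stability (for $L=H$ or $L=C$) it suffices to rule out the existence of a sub-line-bundle $M\hookrightarrow E_{B,\zeta}$ with $M\cdot L\geq \tfrac{1}{2}c_1(E_{B,\zeta})\cdot L=\tfrac12 H\cdot L$. The plan is to use the standard fact, going back to Lazarsfeld and used systematically in the Brill-Noether theory of $K3$'s, that a Lazarsfeld-Mukai bundle $E_{C,A}$ fails to be stable (with respect to a polarization making $C$ a minimal Clifford-index situation) precisely when the pair $(C,A)$ computes the Clifford index in a non-generic way. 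Concretely, I would suppose $M\equiv aH+bC$ destabilizes and push this through the two constraints: first, $M\hookrightarrow E_{B,\zeta}$ forces $h^0(S,M)\neq 0$, hence $M$ is effective with $M^2\geq -2$; second, the quotient $E_{B,\zeta}/M$ has a section, giving another effective class $H-M$. Using that $S$ contains no smooth rational or elliptic curves (the Lemma proved just above) so that $M^2\in\{6,14,\dots\}\cup\{0\text{ only for }M=0\}$ and that the quadratic form $q(a,b)=14a^2+14b^2+32ab$ represents no $-2$ or $0$, I would show that the slope inequality $M\cdot L\geq \tfrac12 H\cdot L$ together with $(H-M)$ effective forces $M^2$ and $(H-M)^2$ to be simultaneously non-negative, violating the Hodge Index Theorem exactly as in the proof that $H$ is indecomposable. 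This handles both $L=H$ and $L=C$ uniformly, the only change being the linear functional used to measure slope.

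\emph{Where genericity enters.} The hypothesis that $B\in|H|$ is general is what guarantees, via Lazarsfeld's theorem and the Brill-Noether-Petri generality noted in the Remark, that $\zeta$ is a complete base-point-free $g^1_5$ computing the gonality and that $E_{B,\zeta}$ is the LM bundle of minimal $c_2$; for special $B$ the bundle could acquire sub-sheaves of higher slope. I expect the main obstacle to be the bookkeeping that converts ``$M$ and $H-M$ are both effective and $M$ has large $L$-slope'' into a contradiction with the Hodge Index Theorem in a way that is genuinely uniform in the two polarizations $H$ and $C$, since $H\cdot C=16$ is close to $H^2=C^2=14$ and the two slope functionals are not proportional. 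The delicate point is verifying that no class $aH+bC$ slips through for one of the two polarizations; I would treat this by reducing to the finitely many $(a,b)$ with $0\leq M\cdot L\leq H\cdot L$ and checking each against $q$, leveraging that the smallest positive value of $q$ is $6$.
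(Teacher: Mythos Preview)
Your plan has a gap at its core step. You assert that a destabilizing sub-line-bundle $M\hookrightarrow E_{B,\zeta}$ must satisfy $h^0(S,M)\neq 0$, and simultaneously that $H-M$ is effective, so that you can feed both into the indecomposability of $H$. Global generation of $E_{B,\zeta}$ does force the torsion-free quotient $N\otimes I_Z$ (hence $N=H-M$) to have sections, but it says nothing about $M$: a sub-line-bundle of a globally generated bundle need not itself have sections, and on a $K3$ of Picard rank $2$ there are plenty of classes of positive $H$- or $C$-degree that are not effective. Without effectivity of \emph{both} summands the Hodge Index/indecomposability argument never starts. Note too that your outline never uses $c_2(E_{B,\zeta})=5$, which is exactly the numerical link the paper exploits.

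The paper proceeds quite differently. For $\mu_H$-stability one writes the saturated destabilizing sequence $0\to M\to E_{B,\zeta}\to N\otimes I_Z\to 0$, so that $M\cdot N+\ell(Z)=c_2=5$; a lemma on Lazarsfeld--Mukai bundles from \cite{LC} gives $M\cdot N\geq 5$, forcing $M\cdot N=5$ and $Z=\emptyset$, and one then checks that $N\cdot(H-N)$ is always \emph{even} in the lattice $\ZZ H\oplus\ZZ C$ (since $H^2$, $C^2$, $H\cdot C$ are all even). The obstruction is a parity argument, not your bound $q\geq 6$. For $\mu_C$-stability the argument is of another nature and genuinely needs $B$ general: it is a dimension count. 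One parametrizes simple bundles admitting a putative $\mu_C$-destabilizing extension with fixed $(N,\ell)$ (a space of dimension $\ell$), fibers over it the locus of pairs $(B,\zeta)$ whose LM bundle lies there (a $G(2,6)$-bundle, using simplicity from the already-established $\mu_H$-stability), and compares with $\dim|H|+\rho(8,1,5)=8$; this forces $\ell=0$, hence again $M\cdot N=5$ and the same parity contradiction, but only for general $B\in|H|$. Your uniform treatment of the two polarizations invokes generality nowhere in the actual stability step, so it cannot substitute for this moduli-theoretic argument.
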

\begin{proof}
First of all, we prove that $E_{B,\zeta}$ is $\mu_H$-stable. By contradiction, assume the existence of a destabilizing sequence
\begin{equation}\label{destabilizing}
0\to M\to E_{B,\zeta}\to N\otimes I_\xi\to 0,
\end{equation}
where $M,N\in\mathrm{Pic}(S)$  satisfy $N\cdot H\leq\mu_H( E_{B,\zeta})=7\leq M\cdot H=(N-H)\cdot H$, and $\xi\subset S$ is a $0$-dimensional subscheme of length $l\geq 0$. Note that $5=c_2(E_{B,\zeta})=N\cdot M+l$. On the other hand, $N\cdot M\geq 5$ by \cite[Lem. 4.1]{LC} and hence $N\cdot (H-N)=N\cdot M=5$. Writing down $N\equiv aH+bC$, one checks that $N\cdot (H-N)$ is even and thus gets a contradiction.

Let us now show that $E_{B,\zeta}$ is $\mu_C$-stable, too. Again by contradiction, assume the existence of a destabilizing sequence with respect to $C$ of the same form as \eqref{destabilizing}, with $M$ and $N$ satisfying $N\cdot C\leq\mu_C( E_{B,\zeta})=8\leq M\cdot H=(N-H)\cdot C$. In particular, this implies that either $M\simeq N$ and $l=0$, or $\di\mathrm{Ext}^2(N\otimes I_\xi,M)=\di\mathrm{Hom}(M,N\otimes I_\xi)=0$. The former case can be excluded because $H\equiv M+N$ is primitive in $\mathrm{Pic}(S)$. The $\mu_H$-stability of $E_{B,\zeta}$ yields its simplicity along with the inequalities $N\cdot H>\mu_H( E_{B,\zeta})=7> M\cdot H=(N-H)\cdot H$, and thus $\mathrm{Hom}(N\otimes I_\xi,M)=0$. In particular, we get
\begin{eqnarray*}\di \mathrm{Ext}^1(N\otimes I_\xi,M)&=&-\chi(M^\vee\otimes N\otimes I_\xi)=-2-\frac{(N-M)^2}{2}+l=\\
&=&-2-(g-1)+2N\cdot M+l=-g-1+c_2(E_{B,\zeta})+N\cdot M=-4+N\cdot M.
\end{eqnarray*}

Let $\mathcal{P}_{N,l}$ be the parameter space for (simple) vector bundles $E$ sitting in a short exact sequence like \eqref{destabilizing} with $l(\xi)=l$. There is a natural surjective map $\mathcal{P}_{N,l}\to S^{[l]}$ with fiber over a $0$-dimensional scheme $\xi\in S^{[l]}$ given by $\mathbb{P}\mathrm{Ext}^1(N\otimes I_\xi, M)$; in particular, one has
$$
\dim \mathcal{P}_{N,l}=2l-5+N\cdot M=l.$$
Let $\mathcal{G}_{N,l}$ be the scheme parametrizing pairs $(B,\zeta)$ with $B\in |H|$ and $\zeta \in G^1_5(B)$ such that the Lazarsfeld-Mukai bundle $[E_{B,\zeta}]\in \mathcal{P}_{N,l}$. Since general curves in $|H|$ are Brill-Noether-Petri general, any component $\mathcal{G}$ of $\mathcal{G}_{N,l}$ dominating $|H|$ has dimension $$\di\mathcal{G}=\di |H|+\rho(8,1,5)=8.$$ On the other hand, $\mathcal{G}_{N,l}$ is birational to a Grassmann bundle over $\mathcal{P}_{N,l}$ with fibers isomorphic to $G(2,6)$ because the bundles in $\mathcal{P}_{N,l}$ are simple. Therefore, we get
$$\di\mathcal{G}=\di\mathcal{P}_{N,l}+\di G(2,6)=l+8.
$$
This implies $l=0$ and  $N\cdot (H-N)=5$, and thus again a contradiction.
\end{proof}

\begin{prop}
\label{Ulrich on K3scroll}
  Let $X:=\PP(\cE)\subset\PP^5$ be  a $3$-dimensional scroll over a $K3$ surface $S$  such that $Pic(S)=\ZZ[H]\oplus \ZZ[C]$ with  $H^2=C^2=14$ and $H\cdot C=16$.
Then $X$ supports no Ulrich line bundles of type $(1)$. On the other hand, $L:=2\xi+\pi^{*}D$ with $D\equiv H-C$ defines a Ulrich line bundle of type $(2)$ on $X$, and the same for its respective companion.   \end{prop}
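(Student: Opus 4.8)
The plan is to check the two conditions of Corollary \ref{equivalent conditions for line bdl} for the bundle $\cE=E_{B,\zeta}$, for which $c_1(\cE)=H$ and $c_2(\cE)=5$, using throughout that $S$ is a $K3$ so that $K_S=0$ and $\chi(S,F)=2+\tfrac12 F^2$ for every line bundle $F$, and that $H$ and $C$ are ample (being primitive and effective, hence very ample by the Remark preceding the statement).

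For the non-existence of type $(1)$ Ulrich line bundles, recall that $\xi+D$ is Ulrich if and only if $H^i(S,D)=H^i(S,D-H)=0$ for all $i$. A necessary condition is $\chi(S,D)=\chi(S,D-H)=0$, which reads $D^2=(D-H)^2=-4$. Subtracting, $(D-H)^2-D^2=-2\,D\cdot H+H^2=0$, hence $D\cdot H=\tfrac12 H^2=7$. Writing $D\equiv aH+bC$ one computes $D\cdot H=14a+16b$, which is even; this contradicts $D\cdot H=7$ and rules out type $(1)$ entirely.

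For type $(2)$ with $D\equiv H-C$, I would first verify $H^i(S,D)=0$. Since $(H-C)\cdot H=-2<0$ and $(C-H)\cdot C=-2<0$ while $H$ and $C$ are ample, neither $H-C$ nor $C-H=-D$ is effective; therefore $H^0(S,D)=0$ and, by Serre duality, $H^2(S,D)\cong H^0(S,-D)^{\vee}=0$, while $H^1(S,D)=0$ follows from $\chi(S,D)=2+\tfrac12 D^2=2-2=0$. The crux is then the vanishing $H^i(S,\cE(D))=0$ for all $i$. A Riemann--Roch computation gives $\chi(S,\cE(D))=0$ (here $c_1(\cE(D))=3H-2C$ and $c_2(\cE(D))=-1$), so it suffices to show $H^0=H^2=0$. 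For $H^2$, Serre duality together with $\cE^{\vee}\cong\cE(-H)$ gives $H^2(S,\cE(D))\cong H^0(S,\cE(C-2H))^{\vee}$; a nonzero section would yield a sub-line-bundle $\cO_S(2H-C)\hookrightarrow\cE$ of slope $(2H-C)\cdot H=12>7=\mu_H(\cE)$, contradicting the $\mu_H$-stability of $\cE$ established in the preceding Lemma.

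The hard part is $H^0(S,\cE(D))=0$, and this is where the naive slope estimate fails: a nonzero section produces an inclusion $\cO_S(C-H)\hookrightarrow\cE$, whose slope $(C-H)\cdot H=2$ is below $\mu_H(\cE)=7$, so stability alone is not decisive. I would instead pass to the saturation $M$ (a line bundle, $S$ being smooth), writing $M\equiv C-H+E'$ with $E'$ effective and $\cE/M\cong I_Z(N)$, where $N=H-M$ and $M\cdot N+\mathrm{length}(Z)=c_2(\cE)=5$. The $\mu_H$-stability of $\cE$ forces $M\cdot H<7$, i.e. $E'\cdot H<5$; as $E'\cdot H$ is even this gives $E'\cdot H\in\{0,2,4\}$. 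If $E'\neq 0$, the Hodge Index Theorem yields $E'^2\le (E'\cdot H)^2/H^2<2$, hence $E'^2\le 0$; but $S$ contains no smooth rational curves (by the Lemma above), so every effective divisor has non-negative self-intersection, and the quadratic form $q(a,b)=14a^2+32ab+14b^2$ vanishes only at the origin, so $E'^2=0$ is impossible. This contradiction excludes $E'\neq 0$. If instead $E'=0$, then $M=C-H$, $N=2H-C$ and $M\cdot N=(C-H)\cdot(2H-C)=6>5$, contradicting $M\cdot N+\mathrm{length}(Z)=5$. Hence $H^0(S,\cE(D))=0$, and combining with $H^2=0$ and $\chi=0$ gives $H^i(S,\cE(D))=0$ for all $i$. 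By Corollary \ref{equivalent conditions for line bdl}(2) the line bundle $L=2\xi+\pi^{*}(H-C)$ is then Ulrich of type $(2)$, and its companion $\cL_2=\pi^{*}C$ (obtained from $(m-a)\xi+\pi^{*}(c_1(\cE)+K_S-D)$ with $m=a=2$) is automatically Ulrich by Theorem \ref{equivalent conditions}(i). The main obstacle throughout is precisely this $H^0$ vanishing, which requires feeding the absence of $(-2)$-curves and the $c_2$ constraint into the stability argument rather than relying on a slope inequality alone.
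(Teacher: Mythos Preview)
Your proof is correct and follows essentially the same strategy as the paper's: the type $(1)$ exclusion is identical, and for type $(2)$ both arguments reduce $H^0(S,\cE(H-C))=0$ to bounding the $H$-degree of an effective divisor via $\mu_H$-stability and then invoking Hodge together with the absence of $(-2)$- and elliptic curves. The only cosmetic differences are that the paper observes $c_2(\cE(H-C))=-1<0$ to conclude directly that the zero locus of a section has a divisorial component $A$ (your $E'$), whereas you reach the same place via saturation and dispose of the $E'=0$ case separately with the $c_2$ identity $M\cdot N+\ell(Z)=5$; and for $H^i(S,D)=0$ you use ampleness of $H$ and $C$ to exclude effectivity of $\pm D$, while the paper restricts to curves in $|H|$ and $|C|$---both are fine.
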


\begin{proof}
By Corollary \ref{equivalent conditions for line bdl}, a line bundle $L=2\xi+D$ on $X$ is Ulrich if and only if
\begin{eqnarray*}
H^i(S, D)=0 \quad \text{for} \quad  i\ge 0  \quad \text{and}  \quad H^i(S, \cE(D))=0 \quad \text{for}\quad  i\ge 0.
\end{eqnarray*}
Note that $\chi(S, D)=0$ implies that $D^2=-4$. This along with $\chi(S, \cE(D))=0$ yields $H\cdot D=-2,$ and thus  $D\equiv H-C$.
We will first show that $H^{i}(S, D)=0$ for $i\ge 0$.

Note that $H^{2}(S, D)=H^{2}(S, H-C)\cong H^{0}(S, -H+C)=0$, as one can show by restricting to $C$. Hence, we get  $H^{1}(S, H-C)=H^{0}(S, H-C)=0$, where the last equality follows by restricting to $B\in |H|$.

It remains to check that $H^i(S, \cE(D))=0$ for $i\ge 0$, where $\cE=E_{B,\xi}$.
Since the bundle $\cE= E_{B,\zeta}$ is $\mu_H$-stable  and the slope of $\cE(C-2H)$ is negative it follows that $H^2(S, \cE (H-C))\cong H^0(S, \cE(C-2H)) = 0$. Since $\chi(\cE(H-C)=0$, it is enough to show that  $H^0(S, \cE(H-C)) = 0.$ Suppose there exists a non-zero section $s$ of $H^0(S, \cE(H-C))$. Note that $c_2(\cE(H-C)) = -1$ hence no section of $\cE(H-C)$ can have a pure codimension $2$ zero locus. In other words, the zero locus of $s$ should have a one-dimensional component. Therefore there exists an effective divisor $A$ (which we can assume to be prime) such that $H^0(S, \cE(H-C-A)) \neq  0$. The stability of $\cE$ yields $\mu_H (\cE(H-C-A))=\frac{1}{2}(2H-C-2A)\cdot H=6-A\cdot H>0$, that is, $1\leq A\cdot H\leq 5$. The Hodge Index Theorem thus implies $14A^2=A^2H^2\leq (A\cdot H)^2\leq 25$ and this is a contradiction because $S$ contains neither ($-2$)-curves nor smooth elliptic curves.

Let us now exclude the existence of Ulrich line bundles of type $(1)$. By Corollary \ref{equivalent conditions for line bdl}, a line bundle $L=\xi+D$ on $X$ is  Ulrich if and only if
\begin{eqnarray*}
H^i(S, D)=0 \quad \text{for} \quad  i\ge 0  \quad \text{and}  \quad H^i(S, c_1(\cE)-D)=0 \quad \text{for}\quad  i\ge 0.
\end{eqnarray*}
Note that  $\chi(S, D)=0$implies that $D^2=-4$. This along with $\chi(S, c_1(\cE)-D)=0$ yields $H\cdot D=7.$ Since
$D\equiv aH+bC$ for some $a,b \in \ZZ$, we get the equation
$14a+16b=7$ that has no integral solutions.
\end{proof}
\vspace{3mm}

Our next goal is to show the existence of some non-trivial extensions of the Ulrich line bundle constructed above with its Ulrich partner.

So set $A_1:=2\xi+H-C$ and let $A_2:=K_X+4\xi-A=C$ be its Ulrich partner. Then
$$
\Ext^1(A_2,A_1)\cong H^1(\PP(\cE),2\xi+H-2C)\cong H^1(S,S^2\cE(H-2C)).
$$
We compute $\chi(S^2\cE(H-2C))$. The Chern classes of $S^2\cE$ are as follows
\begin{eqnarray*}
 c_1(S^2\cE)=3c_1(\cE)=3H \quad \text{and} \quad  c_2(S^2\cE)=2c_1(\cE)^2+4c_2(\cE)=48.
\end{eqnarray*}
\noindent Therefore, we have
\begin{align}
    \nonumber    & c_1(S^2\cE(H-2C)=c_1(S^2\cE)+3(H-2C)=6H-6C   \\
    \nonumber & c_2(S^2\cE(H-2C))=c_2(S^2\cE)+2c_1(S^2\cE)(H-2C)+ 3(H-2C)^2=-42,
  \end{align}

\noindent and thus
$$
h^1(S^2\cE(H-2C))\geq -\chi(S^2\cE(H-2C))=-(\frac{1}{2}[c_1(S^2\cE(H-2C))^2-2c_2(S^2\cE(H-2C))]+3\chi(\cO_X))=24.
$$

\begin{prop}\label{cohgroups} In the previous setting, we have:
\begin{enumerate}
  \item $\Hom(A_2,A_1)=\Hom(A_1,A_2)=0$.
 \item $\Ext^2(A_2,A_1)=\Ext^3(A_2,A_1)=0$.
\end{enumerate}

\end{prop}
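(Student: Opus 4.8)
The plan is to note that, $A_1$ and $A_2$ being line bundles, every group in the statement is the cohomology of a single line bundle on $X$: one has $\Ext^k(A_2,A_1)\cong H^k(X,A_1-A_2)=H^k(X,2\xi+\pi^*(H-2C))$ and $\Hom(A_1,A_2)\cong H^0(X,A_2-A_1)=H^0(X,-2\xi+\pi^*(2C-H))$. The last group vanishes at once: by Proposition~\ref{properties-taut-bundle}(i) we have $\pi_*\cO_X(-2)=0$, hence $\pi_*(-2\xi+\pi^*(2C-H))=0$ and therefore $\Hom(A_1,A_2)=0$. This settles the easy half of (1).

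For the remaining groups I would push everything down to $S$ via $\pi$. Using the projection formula together with $\pi_*\cO_X(2)\cong S^2\cE$ and $R^1\pi_*\cO_X(2)=0$ (Proposition~\ref{properties-taut-bundle}(i)--(ii), with $r=1$), the Leray spectral sequence degenerates and gives $H^i(X,2\xi+\pi^*(H-2C))\cong H^i(S,S^2\cE(H-2C))$ for all $i$. In particular $\Ext^3(A_2,A_1)\cong H^3(S,S^2\cE(H-2C))=0$ because $S$ is a surface, which disposes of half of (2) for free.

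It then remains to prove $H^0(S,S^2\cE(H-2C))=0$ (completing (1)) and $H^2(S,S^2\cE(H-2C))=0$ (completing (2)). The crucial input is the preceding lemma, by which $\cE=E_{B,\zeta}$ is simultaneously $\mu_H$-stable and $\mu_C$-stable; consequently $S^2\cE$ is both $\mu_H$- and $\mu_C$-semistable, since in characteristic zero a symmetric power of a $\mu$-semistable bundle is again $\mu$-semistable. For the first vanishing one computes $\mu_H(S^2\cE(H-2C))=14+(H-2C)\cdot H=14-18=-4<0$, and a $\mu_H$-semistable sheaf of negative slope has no nonzero global section. For the second vanishing, Serre duality on the K3 surface (where $K_S\cong\cO_S$) gives $H^2(S,S^2\cE(H-2C))\cong H^0(S,(S^2\cE)^\vee(2C-H))^\vee$; using $\cE^\vee\cong\cE(-H)$ for the rank two bundle $\cE$, hence $(S^2\cE)^\vee\cong S^2\cE(-2H)$, this becomes $H^0(S,S^2\cE(2C-3H))^\vee$, and now $\mu_C(S^2\cE(2C-3H))=16+(2C-3H)\cdot C=16-20=-4<0$ forces the vanishing by $\mu_C$-semistability.

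The main (indeed the only) delicate point is the vanishing of $H^2$: the slope of the Serre-dual bundle with respect to $H$ is positive, so $\mu_H$-semistability is useless there. The resolution is exactly to invoke the second polarization $C$ --- which is why the preceding lemma takes the trouble to establish $\mu_C$-stability as well --- and to apply semistability with respect to whichever of $H$ or $C$ renders the relevant slope negative. Beyond this, one should only keep track of the numerics $H^2=C^2=14$, $H\cdot C=16$ and $c_1(\cE)=H$ that underlie every slope computation above.
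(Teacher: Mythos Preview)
Your argument is correct and follows essentially the same route as the paper: push down to $S$ via Leray to reduce to cohomology of $S^2\cE(H-2C)$, kill $H^0$ using $\mu_H$-semistability of $S^2\cE$ and negativity of the $H$-slope, and kill $H^2$ by Serre duality together with $\mu_C$-semistability and negativity of the $C$-slope. The only cosmetic difference is that the paper leaves the Serre-dual bundle as $S^2\cE^\vee(2C-H)$ rather than rewriting it via $\cE^\vee\cong\cE(-H)$ as you do.
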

\begin{proof}
  \begin{enumerate}
    \item First of all, notice that $\Hom(A_2,A_1)\cong\HH^0(X,A_1-A_2)\cong \HH^0(S,S^2\cE(H-2C))$.  Now, since $\cE$ is $\mu_H$-stable, $S^2\cE$ is $\mu_H$-semistable. On the other hand,
    $$
    \mu_H(S^2\cE(H-2C))=\frac{1}{3}(6H-6C)H=-4<0.
    $$
    \noindent and therefore $S^2\cE(H-2C)$ has no global sections. Next,
    $$
    \Hom(A_1,A_2)\cong\HH^0(X,A_2-A_1)=\HH^0(X,2C-H-2\xi)=0.
    $$

 \item First of all, note that  $\Ext^3(A_2,A_1)\cong  \HH^3(S,S^2\cE(H-2C))=0$ trivially.  As concerns the other vanishing, we use the isomorphisms  $\Ext^2(A_2,A_1)\cong \HH^2(S,S^2\cE(H-2C))\cong \HH^0(S,S^2\cE^\vee(2C-H))$. The bundle $S^2\cE^\vee(2C-H)$ has no global sections since it is $\mu_C$-semistable and has positive $\mu_C$-slope.
  \end{enumerate}
\end{proof}

Proposition \ref{cohgroups} implies that we have a $23$-dimensional family of  non-trivial extensions
 \begin{equation}\label{extensionrankoneulrich}
   0\to A_1\to \cF\to A_2\to 0.
 \end{equation}
 By \cite[Lemma 4.2]{casanellas-hartshorne-geiss-schreyer} any non-trivial extension $\cF$ of $A_1$ by $A_2$ is simple, being $A_1$ and $A_2$ non-isomorphic Ulrich line bundles of the same slope. On the other hand,
$\cF$ (and $\cF(K_X)$), being Ulrich bundles, are $\mu$-semistable (or Gieseker semistable) with slopes satisfying
$$
\mu_{\xi}(\cF):=\frac{1}{2} c_1(\cF)\xi^2=16>\mu_{\xi}(\cF(K_X))=12.
$$
\noindent and therefore
$$
\Ext^3(\cF,\cF)\cong\Hom(\cF,\cF(K_X))\cong 0.
$$
 However, it does not seem to be easy to construct stable Ulrich bundles of rank $2$ on $X$ deforming such an $\cF$, since
$$ 14=-\chi(\cF\otimes \cF^\vee)\leq 23.$$
On the other hand, the existence of stable rank two bundles on $X$ again follows from Theorem \ref{pullback}.

\begin{thm}
\label{stable rk2 Ulrich on K3scroll}
  Let $X:=\PP(\cE)\subset\PP^5$ be  a $3$-dimensional scroll over a $K3$ surface $S$  such that $Pic(S)=\ZZ[H]\oplus \ZZ[C]$ with  $H^2=C^2=14$ and $H\cdot C=16$.
Then $X$ supports stable rank two Ulrich bundles $\cG$ with Chern classes $c_1(\cG)=2\xi+H$ and $c_2(\cG)=\xi^2+\xi H+11\mathfrak{f}$.
\end{thm}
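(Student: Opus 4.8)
The plan is to realize $\cG$ through the pullback construction of Theorem \ref{pullback} and then force its stability by comparing Chern classes against the complete list of Ulrich line bundles furnished by Proposition \ref{Ulrich on K3scroll}. First I would invoke the existence of a rank two special Ulrich bundle $\cF'$ on the polarized K3 surface $(S,H)$: the polarization $H$ is primitive, hence very ample by the Lemma and \cite{SD}, so such an $\cF'$ exists by \cite{aprodu-farkas-ortega, faenzi}. By Proposition \ref{ulrichchernclassessurface} it satisfies $c_1(\cF')=3H$ and $c_2(\cF')=\tfrac12(5H^2)+2\chi(\cO_S)=39$. Setting $\cF:=\cF'(-H)$, the twisting formulas give $c_1(\cF)=H=c_1(\cE)$ and $c_2(\cF)=c_2(\cF')-c_1(\cF')\cdot H+H^2=39-42+14=11$. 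Because $\cF'$ is Ulrich on the surface, $H^i(S,\cF)=H^i(S,\cF'(-H))=0$ and $H^i(S,\cF(-c_1(\cE)))=H^i(S,\cF'(-2H))=0$ for $i=0,1,2$; hence Theorem \ref{pullback} yields that $\cG:=\pi^*\cF\otimes\xi$ is a rank two Ulrich bundle on $X$.

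Next I would record the Chern classes of $\cG$. Using $c_1(\cE)=H$ and $c_2(\cE)=5$ (the Chern classes of the Lazarsfeld--Mukai bundle $\cE=E_{B,\zeta}$), the Grothendieck relation on $X$ reads $\xi^2=H\xi-5\mathfrak{f}$. For the twist of a rank two bundle by the line bundle $\xi$ one has $c_1(\cG)=\pi^*c_1(\cF)+2\xi=2\xi+H$ and $c_2(\cG)=\pi^*c_2(\cF)+\pi^*c_1(\cF)\cdot\xi+\xi^2=11\mathfrak{f}+H\xi+\xi^2$, which is exactly the asserted value.

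For stability I would argue by contradiction. As an Ulrich bundle, $\cG$ is $\mu_\xi$-semistable by Proposition \ref{Ulrichproperties}; were it not stable, it would be strictly semistable and, again by Proposition \ref{Ulrichproperties}(iii), would sit in an exact sequence $0\to\cA\to\cG\to\cB\to0$ with $\cA,\cB$ Ulrich line bundles. Combining Proposition \ref{Ulrich on K3scroll} (no type $(1)$ bundles, and $D\equiv H-C$ forced in type $(2)$) with Corollary \ref{equivalent conditions for line bdl}(3), the only Ulrich line bundles on $X$ are $A_1=2\xi+H-C$ and its companion $A_2=\pi^*C$. Since $c_1(A_1)+c_1(A_2)=2\xi+H=c_1(\cG)$ while neither $2c_1(A_1)$ nor $2c_1(A_2)$ equals $c_1(\cG)$, the sub and quotient must be $A_1$ and $A_2$, one each; in particular $c_2(\cG)=c_1(A_1)\cdot c_1(A_2)=(2\xi+H-C)\cdot\pi^*C$. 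Intersecting with $\pi^*H$ and using $H^2=C^2=14$, $H\cdot C=16$ gives $c_1(A_1)\cdot c_1(A_2)\cdot\pi^*H=2(H\cdot C)=32$, whereas the bundle we built satisfies $c_2(\cG)\cdot\pi^*H=2H^2=28$. This contradiction shows that $\cG$ is stable.

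The hard part is precisely this final step. In contrast with the Picard rank one situation of Theorem \ref{Ulrich on general K3scroll}, here Ulrich line bundles genuinely exist, so stability is not automatic and must be extracted from the precise classification of Ulrich line bundles together with a numerical separation of $\cG$ from every admissible extension. One must moreover choose the test divisor with care: pairing $c_2$ against $\xi$ fails to distinguish the two bundles (both give $34$, consistent with the degree $\xi^3=9$ of $X$), and it is only by pairing against $\pi^*H$ (or, symmetrically, $\pi^*C$) that the discrepancy $28\neq32$ emerges.
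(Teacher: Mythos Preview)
Your argument is correct and follows essentially the same route as the paper: construct $\cG=\pi^*(\cF'(-H))\otimes\xi$ from a special rank two Ulrich bundle $\cF'$ on $(S,H)$ via Theorem \ref{pullback}, compute its Chern classes, and then rule out strict semistability by checking that $c_2(\cG)\cdot\pi^*H=28$ whereas any extension of the two Ulrich line bundles $A_1,A_2$ would give $32$. Your write-up is in fact more detailed than the paper's (you spell out $c_2(\cF')=39$, $c_2(\cF)=11$, and the observation that pairing with $\xi$ does not separate the two cases); the only minor caveat is that \cite{aprodu-farkas-ortega} is stated under a Picard rank one hypothesis, so for this lattice you should rely on \cite{faenzi} alone, as the paper does.
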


\begin{proof}
As in the proof of Proposition \ref{Ulrich on general K3scroll}, thanks to the existence of special Ulrich bundles $\cF'$ of rank $2$ on $S$ (cf. \cite{faenzi}), $X$ supports rank two Ulrich bundles of the form $\cG:=\pi^*(\cF'(-H)))\otimes\xi$.  One easily verifies that their Chern classes are $c_1(\cG)=2\xi+H$ and $c_2(\cG)=\xi^2+\xi H+11\mathfrak{f}$, where $\mathfrak{f}$ is a fibre of the scroll $X$; in particular, one has $c_2(\cG)H=28$. In order to conclude that $\cG$ is stable, by Proposition \ref{Ulrichproperties} it is enough to remark that it cannot be an extension of the two Ulrich line bundles constructed in Proposition \ref{Ulrich on K3scroll} because otherwise it would satisfy  $c_2(\cG)H=32$.
\end{proof}

%
%

\section{Palatini Scroll}
\label{Palatini Scroll}
Let $X\subset\PP^5$ be the Palatini scroll (see \cite{ottaviani:scrolls}), that is, $X$ is the projectization of an Ulrich rank two bundle $\cE$ on the cubic surface $S\subset\PP^3$ with $c_1(\cE)=2H$ and $c_2(\cE)=5$.

The surface $S$ is the blow-up of $\PP^2$ at $6$ points in general position. If $\sigma$ is the blow-up map, we denote by $e_i$ for $i=1,\cdots, 6$ the exceptional curves and set $e_0:=\sigma^{*}{\mathcal O}_{\PP^{2}}(1)$.  The line bundle ${\mathcal O}_{S}(1)=\sigma^{*}{\mathcal O}_{\PP^{2}}(3)-\sum_{i=1}^6 e_i$ is the one giving the embedding of $S$ in $\PP^3$ and we denote by $H$ its class.

We are going to use Corollary (\ref{equivalent conditions for line bdl}) to prove the following result:

\begin{prop}
\label{Ulrich on Palatini}
  Let $X\subset\PP^5$ be the Palatini scroll.  There are no Ulrich line bundles of type $L=\xi+\pi^{*}D$. On the other hand, up to permutation of the exceptional divisors, there are three Ulrich line bundles of type $L=2\xi+\pi^{*}D$, jointly with their respective companions. They are:
\begin{equation}\label{listpalatini}
  \begin{array}{ll}
    L_1:=2\xi-e_0+e_1; & L_1':=4e_0-2e_1-e_2-e_3-e_4-e_5-e_6; \\
    L_2:=2\xi-2e_0+e_1+e_2+e_3+e_4 & L_2':=5e_0-2e_1-2e_2-2e_3-2e_4-e_5-e_6  \\
    L_3:=2\xi-3e_0+2e_1+e_2+e_3+e_4+e_5+e_6 & L_3':=6e_0-3e_1-2e_2-2e_3-2e_4-2e_5-2e_6.
  \end{array}
\end{equation}
  \end{prop}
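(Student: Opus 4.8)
The plan is to run everything through Corollary \ref{equivalent conditions for line bdl}, converting the Ulrich condition into cohomological vanishings on the cubic surface $S$ and then into numerical constraints via Riemann--Roch, using $K_S=-H$, $H^2=3$, $\chi(\cO_S)=1$, and $c_1(\cE)=2H$, $c_2(\cE)=5$. Recall $\chi(\mathcal{L})=1+\tfrac12(\mathcal{L}^2+\mathcal{L}\cdot H)$ on $S$. For type $(1)$, $\xi+\pi^{*}D$ is Ulrich iff $H^{\bullet}(S,D)=0$ and $H^{\bullet}(S,D-c_1(\cE))=0$; imposing $\chi(D)=0$ and $\chi(D-2H)=0$ gives $D^2+D\cdot H=-2$ and $D^2-3D\cdot H=-8$, and subtracting yields $D\cdot H=3/2\notin\ZZ$. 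Thus there is no numerical candidate whatsoever, and type $(1)$ Ulrich line bundles cannot exist. This disposes of the first assertion immediately.

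For type $(2)$, $2\xi+\pi^{*}D$ is Ulrich iff $H^{\bullet}(S,D)=0$ and $H^{\bullet}(S,\cE(D))=0$. Requiring $\chi(D)=0$ and $\chi(\cE(D))=0$ gives $D^2+D\cdot H=-2$ and $D^2+3D\cdot H=-6$, hence $D\cdot H=-2$ and $D^2=0$. Writing $D=ae_0-\sum_{i=1}^{6}b_ie_i$ these become $\sum b_i^2=a^2$ and $3a-\sum b_i=-2$. Cauchy--Schwarz, $(\sum b_i)^2\le 6\sum b_i^2$, forces $3a^2+12a+4\le0$, so $a\in\{-3,-2,-1\}$, and a short case check (partitioning $a^2$ into six squares with the prescribed sum) shows that, up to permuting $e_1,\dots,e_6$, the only solutions are $D_1=-e_0+e_1$, $D_2=-2e_0+e_1+e_2+e_3+e_4$, $D_3=-3e_0+2e_1+e_2+\cdots+e_6$. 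This pins down exactly three candidates.

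It remains to verify that the vanishings hold on the nose, not merely numerically. The vanishing $H^{\bullet}(S,D_i)=0$ is routine: $D_i\cdot e_0<0$ kills $H^0$, the classes $-H-D_i$ (namely $-2e_0+e_2+\cdots+e_6$, $-e_0+e_5+e_6$, $-e_1$) are non-effective so Serre duality kills $H^2$, and $H^1=0$ follows from $\chi(D_i)=0$. The delicate point is $H^{\bullet}(S,\cE(D_i))=0$. The key observation is that $\ell_i:=D_i+H$ is in each case a line on the cubic, i.e.\ a $(-1)$-curve ($\ell_i^2=-1$, $\ell_i\cdot H=1$; explicitly $\ell_1=2e_0-e_2-\cdots-e_6$, $\ell_2=e_0-e_5-e_6$, $\ell_3=e_1$). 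Twisting $0\to\cO_S(-\ell_i)\to\cO_S\to\cO_{\ell_i}\to0$ by $\cE(D_i)$ and using $\cE(D_i-\ell_i)=\cE(-H)$ gives
\[
0\to\cE(-H)\to\cE(D_i)\to\cE(D_i)|_{\ell_i}\to0.
\]
Since $\cE$ is Ulrich, $H^{\bullet}(S,\cE(-H))=0$, so $H^{\bullet}(S,\cE(D_i))\cong H^{\bullet}(\ell_i,\cE(D_i)|_{\ell_i})$ and the problem collapses to a computation on $\ell_i\cong\PP^1$. The hard part, and the only genuinely non-formal input, is the splitting type of $\cE$ on these lines: I must show $\cE|_{\ell_i}\cong\cO_{\PP^1}(1)^{\oplus2}$. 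I would deduce this from the very ampleness of $\xi$: the restriction of $\xi$ to $\pi^{-1}(\ell_i)=\PP(\cE|_{\ell_i})$ is again very ample, so $\cE|_{\ell_i}$ is a very ample rank-two bundle of degree $c_1(\cE)\cdot\ell_i=2H\cdot\ell_i=2$ on $\PP^1$; a summand of degree $\le0$ would produce a section of $\PP(\cE|_{\ell_i})$ on which $\xi$ has non-positive degree, contradicting very ampleness, so the splitting must be $\cO(1)^{\oplus2}$. Then $\cE(D_i)|_{\ell_i}=\cE|_{\ell_i}\otimes\cO(D_i\cdot\ell_i)=\cO(1)^{\oplus2}\otimes\cO(-2)=\cO(-1)^{\oplus2}$, which is acyclic on $\PP^1$; feeding this back shows $H^{\bullet}(S,\cE(D_i))=0$, so each $L_i=2\xi+\pi^{*}D_i$ is Ulrich.

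Finally I would identify the companions. By the Ulrich pairing the partner of $L_i$ is $K_X+4\xi-L_i$, and since $K_X=-2\xi+\pi^{*}(K_S+c_1(\cE))=-2\xi+\pi^{*}H$, this partner equals $\pi^{*}(H-D_i)$, a line bundle of the form $\pi^{*}D$ (the case $a=0$), automatically Ulrich. Direct substitution gives $H-D_1=4e_0-2e_1-e_2-\cdots-e_6$, $H-D_2=5e_0-2e_1-2e_2-2e_3-2e_4-e_5-e_6$, $H-D_3=6e_0-3e_1-2e_2-\cdots-2e_6$, which are precisely $L_1',L_2',L_3'$. Since the numerical analysis exhausted all candidates up to permutation, this establishes that, up to permuting the exceptional divisors, these are the only Ulrich line bundles of type $(2)$, together with their companions, and that none of type $(1)$ exist.
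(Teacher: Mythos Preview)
Your proof is correct, and the numerical reduction matches the paper's (both derive $D\cdot H=-2$, $D^2=0$ from $\chi(D)=\chi(\cE(D))=0$ and then enumerate the finitely many $D$). Where you diverge is in two places. First, to bound the range of $a$ the paper argues that the companion $L'=\pi^*(H-D)$ must satisfy $h^0(L')=7$, forcing $a<3$, and then eliminates $a=0,1,2$ and $a\le -4$ separately; your single Cauchy--Schwarz inequality $(3a+2)^2\le 6a^2$ gives $a\in\{-3,-2,-1\}$ in one stroke. Second, and more substantively, for the verification of $H^\bullet(S,\cE(D_i))=0$ the paper invokes the Hartshorne--Serre type sequence
\[
0\to\cO_S(D_i)\to\cE(D_i)\to\cI_{Z|S}(2H+D_i)\to 0
\]
coming from a section of $\cE$ with zero scheme $Z$ of length $5$, and then does a case-by-case check. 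Your route---observing that $\ell_i:=D_i+H$ is a line on the cubic, restricting via
\[
0\to\cE(-H)\to\cE(D_i)\to\cE(D_i)|_{\ell_i}\to 0,
\]
and using the Ulrich vanishing $H^\bullet(\cE(-H))=0$ together with the splitting $\cE|_{\ell_i}\cong\cO_{\PP^1}(1)^{\oplus 2}$ forced by very ampleness---is genuinely different. It has the advantage of bypassing any analysis of the length-$5$ scheme $Z$ and of making transparent exactly which property of $\cE$ (namely, that it is Ulrich for $H$ and very ample) is being used; the paper's approach, on the other hand, does not need the observation that each $D_i+H$ happens to be a $(-1)$-curve.
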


\begin{proof}
If $\xi+D$ is an Ulrich line bundle, then the vanishings $0=\chi(D)=\chi(D-2H)$ yield $2DH=3$, hence a contradiction.

As concerns the second statement, let $L:=2\xi+D$ with $D:=ae_o+\sum_{i=1}^6 a_ie_i$ be an Ulrich line bundle. Applying Riemann Roch to the two equations $\chi(D)=\chi(\cE(D))=0$ we get
\begin{equation}\label{numerical}
\begin{array}{cc}
  a^2=\sum a_i^2 & \sum a_i=-3a-2.
\end{array}
\end{equation}
On the other hand, since the Ulrich companion $L'=H-D=(3-a)e_0-\sum (a_i+1)e_i$ should verify $H^0(L')=7$, we get $a<3$. From $H^0(ae_0+\sum_{i=1}^6a_ie_i)=0$ and equation (\ref{numerical}) we get that $a=2,1,0$ are not possible. Finally, again from (\ref{numerical}), we can exclude $a\leq -4$. Now, a one-by-one analysis of the remaining cases,
using the short exact sequence
$$
0\ra \cO_S(D)\ra\cE(D)\ra \cI_{Z|S}(2H+D)\ra 0,
$$
\noindent gives the result.

\end{proof}

We will use the previous Ulrich line bundles to construct rank two Ulrich bundles. In order to do this, take the Ulrich line bundles $D_1:=6e_0-3e_1-2e_2-2e_3-2e_4-2e_5-2e_6$ and $D_2:=5e_0-e_1-2e_2-2e_3-2e_4-2e_5-e_6$. One computes
$$
Ext^1(D_2,D_1)\cong H^1(e_0-2e_1-e_6)=\CC
$$
and thus get a non-trivial extension:
$$
0\ra \cO(D_1)\ra \cF\ra \cO(D_2)\ra 0.
$$
By \cite[Lemma 4.2]{casanellas-hartshorne-geiss-schreyer} any non-trivial extension $\cF$ of $\cO(D_1)$ by $\cO(D_2)$ is simple, being $\cO(D_1)$ and $\cO(D_2)$ non-isomorphic Ulrich line bundles of the same slope.

The former rank $2$ Ulrich bundles, being extensions of line bundles, cannot be stable. In order to construct stable rank two Ulrich bundles on the Palatini scroll we are going to use Theorem \ref{pullback} along with the following result:
\begin{prop}\label{rk2oncubicsurf}
  Let $S\subset\PP^3$ be a cubic surface. Then there exists a rank two bundle $\cF$ on $S$ with  Chern classes $c_1(\cF)=2\cL-H$, $c_2(\cF)=H^2-H\cdot \cL+\cL^2$  and satisfying $H^i(S,\cF)=H^i(S,\cF(-c_1(\cE))=0$ for $i=0,1,2$, where $\cL$ is any of the Ulrich line bundles on $S$ constructed in \cite{pons_llopis-tonini}.
\end{prop}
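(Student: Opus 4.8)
The plan is to realize $\cF$ through a Serre-type extension built from $\cL$ and three general points. First I would record the numerics of an Ulrich line bundle on the cubic surface: from $\chi(\cL-H)=\chi(\cL-2H)=0$ together with $K_S=-H$ and $\chi(\cO_S)=1$ one gets $H\cdot\cL=3$ and $\cL^2=1$; moreover $h^0(S,\cL)=\de(S)=3$ with $H^i(S,\cL)=0$ for $i>0$, and $\cL$ is globally generated. Its Ulrich partner $\cL':=2H-\cL=K_S+3H-\cL$ shares all these properties, in particular $h^0(S,\cL')=3$ and $\cL'$ globally generated. Since $\cL^2=(\cL')^2=1$, the complete systems $|\cL|$ and $|\cL'|$ are base point free of projective dimension two and define birational morphisms onto $\PP^2$. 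I would then fix three general points $Z=\{p_1,p_2,p_3\}\subset S$ and take a non-split extension
\begin{equation}
0\to\cO_S(\cL-H)\to\cF\to\cI_{Z|S}(\cL)\to 0,
\end{equation}
which is possible because $\Ext^1(\cI_{Z|S}(\cL),\cO_S(\cL-H))\cong H^1(S,\cI_{Z|S})^\vee\cong\CC^2\neq 0$, as follows from Serre duality and the structure sequence of $Z$.

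Next I would read off the Chern classes and settle local freeness. From the extension, $c_1(\cF)=2\cL-H$ and $c_2(\cF)=(\cL-H)\cdot\cL+\de(Z)=\cL^2-H\cdot\cL+3=1-3+3=1=H^2-H\cdot\cL+\cL^2$, as required. For local freeness I would invoke the Cayley--Bacharach criterion governing the Serre construction: the controlling line bundle is $\omega_S\otimes\cO_S(\cL)\otimes\cO_S(\cL-H)^{-1}=\cO_S(-H+H)=\cO_S$, for which the Cayley--Bacharach condition is vacuous; hence there is a non-split extension whose middle term $\cF$ is locally free, and I would choose such a class.

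The cohomological vanishings are the heart of the argument, and I would reduce them entirely to $H^0$ and $H^2$ by using that $\chi(\cF)=\chi(\cF(-2H))=0$ (both immediate Riemann--Roch computations from the Chern classes). For $H^i(S,\cF)=0$: one has $H^2(S,\cF)\cong H^0(S,\cF^\vee(K_S))^\vee=H^0(S,\cF(-2\cL))^\vee=0$ \emph{unconditionally}, since after twisting by $-2\cL$ both boundary terms $\cO_S(-\cL-H)$ and $\cI_{Z|S}(-\cL)$ have no sections; while $H^0(S,\cF)\hookrightarrow H^0(S,\cI_{Z|S}(\cL))$ (because $H^0(S,\cL-H)=0$ by Ulrichness), and this vanishes exactly when $Z$ imposes independent conditions on $|\cL|$; then $\chi(\cF)=0$ forces $H^1(S,\cF)=0$. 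For $H^i(S,\cF(-2H))=0$ (recall $c_1(\cE)=2H$) I would argue symmetrically: $H^0(S,\cF(-2H))=0$ because the boundary terms $\cO_S(\cL-3H)$ and $\cI_{Z|S}(\cL-2H)$ have no sections (the latter since $H^0(S,\cL-2H)=0$), and $H^2(S,\cF(-2H))\cong H^0(S,\cF(2H-2\cL))^\vee\hookrightarrow H^0(S,\cI_{Z|S}(2H-\cL))$, which vanishes precisely when $Z$ imposes independent conditions on $|\cL'|=|2H-\cL|$; then $\chi(\cF(-2H))=0$ gives $H^1(S,\cF(-2H))=0$.

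The main obstacle is therefore concentrated in the two genericity statements, namely that a \emph{single} length-three subscheme $Z$ can simultaneously impose independent conditions on both base point free systems $|\cL|$ and $|\cL'|$. Each condition fails only on a proper closed locus of triples: since $|\cL|$ and $|\cL'|$ are base point free and birational onto $\PP^2$, a general triple has non-collinear image and hence imposes independent conditions. Their common complement is thus still dense, so I may choose $Z$ in it while keeping a non-split---hence, by the Cayley--Bacharach discussion, locally free---extension. This produces the desired rank two bundle $\cF$, and feeding it into Theorem \ref{pullback} yields the rank two Ulrich bundle on the Palatini scroll.
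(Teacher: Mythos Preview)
Your argument is correct and gives a genuinely different construction from the paper's. The paper does not use the Serre construction at all: it observes that the required vanishings $H^i(S,\cF)=H^i(S,\cF(-2H))=0$ are exactly the statement that $\cF':=\cF(2H)$ is Ulrich on $(S,\cO_S(2))$, and then produces such an $\cF'$ by Beauville's pull-back trick, namely $\cF'=p^*(T_{\PP^2})\otimes\cL$ for a finite linear projection $p:S\to\PP^2$, using that $T_{\PP^2}$ is the unique rank two Ulrich bundle on the Veronese surface. This is a one-line existence proof that avoids any cohomological bookkeeping on $S$. Your approach, by contrast, builds $\cF$ directly as an elementary transformation along three general points; it is more explicit and self-contained (no appeal to Beauville's corollary or to the classification of Ulrich bundles on the Veronese), at the cost of having to verify the vanishings by hand and to manage the genericity of $Z$ with respect to both $|\cL|$ and $|\cL'|$.

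One small imprecision: the phrase ``non-split---hence, by the Cayley--Bacharach discussion, locally free'' overstates the implication. Since $\dim\Ext^1=2$ while $Z$ has length three, not every non-split class is locally free; what Cayley--Bacharach (vacuous here because the controlling bundle is $\cO_S$) guarantees is the \emph{existence} of a locally free class, and this is what you should choose. The rest of your argument only uses the sequence and the genericity of $Z$, so it is unaffected.
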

\begin{proof} We recall that $c_1(\cE)=\cO_{S}(2)$. The vanishings $H^i(S,\cF)=H^i(S,\cF(-c_1(\cE))=0$ for $i=0,1,2$ will be satisfied if we take $\cF:=\cF'\otimes \cO_{S}(-2)$ for some rank $2$ Ulrich vector bundle $\cF'$  with respect  to $\cO_{S}(2)$, assuming that such an $\cF$ exists.
By \cite{pons_llopis-tonini}, the polarized surface $(S,\cO_{S}(1))$ carries Ulrich line bundles and hence $(S,\cO_{S}(2))$ carries a Ulrich  bundle $\cF'$ of rank $2$ by \cite[Corollary in \S 2]{beauville-ulrich-intro}. Since $T_{\PP^{2}}$ is the only rank $2$ Ulrich bundle on $(\PP^{2},  \cO_{\PP^{2}}(2))$ (cf. \cite{coskun-genc:ulrich-veronese}), going through the proof  of  \cite[Corollary in \S 2]{beauville-ulrich-intro}, we get  $\cF'=p^*(T_{\PP^{2}}) + \cL$ where $p: S \to  \PP^{2}$ is a finite linear projection and $\cL$  is any of the Ulrich line bundle on $(S,\cO_{S}(1))$ provided in \cite{pons_llopis-tonini}. One easily checks that the Chern classes of $\cF=\cF'\otimes \cO_{S}(-2)$ are as in the statement.
\end{proof}

Now we can state:
\begin{thm}
\label{stable rk2 Ulrich on PalatiniScroll}
  On the Palatini scroll $X:=\PP(\cE)\subset\PP^5$ there are stable  rank two Ulrich bundles $\cG$ with Chern classes
  $c_1(\cG)=2\xi+\pi^{*}(2\cL-H)$ and $c_2(\cG)=\xi^2+\xi\cdot \pi^{*}(2\cL-H)+\pi^{*}(H^2-H\cdot \cL+\cL^2)$.
\end{thm}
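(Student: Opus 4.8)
The plan is to apply Theorem \ref{pullback} to the rank two bundle $\cF$ furnished by Proposition \ref{rk2oncubicsurf}, and then argue stability via the dichotomy in Proposition \ref{Ulrichproperties}(iii). Concretely, Proposition \ref{rk2oncubicsurf} produces a rank two bundle $\cF$ on the cubic surface $S$ satisfying $H^i(S,\cF)=H^i(S,\cF(-c_1(\cE)))=0$ for $i=0,1,2$; these are exactly the hypotheses of Theorem \ref{pullback}, so the bundle $\cG:=\pi^*\cF\otimes\xi$ is Ulrich on $X=\PP(\cE)$ with respect to $\xi$, and $\cG$ has rank two. This immediately produces the required Ulrich bundle; the content of the theorem is the stability assertion.

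To pin down the Chern classes, I would compute $c_1(\cG)$ and $c_2(\cG)$ directly from $\cG=\pi^*\cF\otimes\xi$ using the projection formula together with the relation $\xi^2=\pi^*c_1(\cE)\cdot\xi-\pi^*c_2(\cE)$ in the Chow ring of $X=\PP(\cE)$ (the Grothendieck relation, here with $r=1$). Since $\cF$ has rank two with $c_1(\cF)=2\cL-H$ and $c_2(\cF)=H^2-H\cdot\cL+\cL^2$, one gets $c_1(\cG)=\pi^*c_1(\cF)+2\xi=2\xi+\pi^*(2\cL-H)$ and, after expanding $c_2(\pi^*\cF\otimes\xi)=c_2(\pi^*\cF)+c_1(\pi^*\cF)\cdot\xi+\xi^2$, the stated value $c_2(\cG)=\xi^2+\xi\cdot\pi^*(2\cL-H)+\pi^*(H^2-H\cdot\cL+\cL^2)$. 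This is a routine Chern-class bookkeeping step that I would state but not grind through.

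For stability, by Proposition \ref{Ulrichproperties}(iii) the Ulrich bundle $\cG$ is $\mu_\xi$-semistable, and if it fails to be stable it must sit in an exact sequence $0\to\cL_1\to\cG\to\cL_2\to 0$ with $\cL_1,\cL_2$ Ulrich line bundles of the same reduced Hilbert polynomial, hence (being of rank one) Ulrich line bundles on $X$. The key point is therefore to show that $\cG$ cannot arise as such an extension. Here I would mirror the numerical argument used in Theorem \ref{stable rk2 Ulrich on K3scroll}: the Ulrich line bundles on $X$ are completely classified in Proposition \ref{Ulrich on Palatini} (the six bundles $L_1,L_1',L_2,L_2',L_3,L_3'$, up to permutation of the exceptional divisors), so any extension of two of them has a determined second Chern class $c_2$, which I would compare against the $c_2(\cG)$ computed above — for instance by evaluating the degree $c_2(\cG)\cdot\xi$ and checking it disagrees with every value $c_2(\cL_i\oplus\cL_j')\cdot\xi$ realizable by a pair of Ulrich line bundles with $c_1=c_1(\cG)$. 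Since $c_1(\cG)=2\xi+\pi^*(2\cL-H)$ is fixed, only finitely many pairs $(\cL_i,\cL_j')$ can appear, and a finite numerical check rules them out.

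I expect this last numerical comparison to be the main obstacle, for two reasons. First, it depends on the specific Ulrich line bundle $\cL$ chosen in Proposition \ref{rk2oncubicsurf}, so one must verify the $c_2$-mismatch uniformly over the admissible choices of $\cL$ (equivalently, over the orbit under permuting the exceptional divisors). Second, one must confirm that $c_1(\cG)$ is even compatible with being written as $c_1(\cL_i)+c_1(\cL_j')$ for some admissible pair; if $c_1(\cG)$ lies outside the set of such sums, stability is immediate and the $c_2$-computation is unnecessary, whereas if it lies inside, the $c_2$-invariant must do the separating work. The cleanest route is to reduce everything to the single intersection number against $\xi$ (or against a fibre $\mathfrak f$), as in the $K3$ case, so that the obstruction becomes one explicit inequality between integers.
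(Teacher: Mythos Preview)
Your proposal is correct and follows essentially the same approach as the paper: construct $\cG=\pi^*\cF\otimes\xi$ via Theorem \ref{pullback} and Proposition \ref{rk2oncubicsurf}, compute Chern classes, and rule out $\cG$ being an extension of Ulrich line bundles using Proposition \ref{Ulrichproperties}(iii) together with the classification in Proposition \ref{Ulrich on Palatini}. The only difference is in the final numerical check: rather than arguing uniformly over all $\cL$ and comparing $c_2$, the paper simply fixes the specific choice $\cL=e_0$, computes $c_1(\cG)=2\xi+\pi^{*}(-e_0+\sum_{i=1}^{6}e_i)$, and observes that this first Chern class is not realized by $L_i+L_j'$ for any pair from the list (\ref{listpalatini}); this is precisely the shortcut you anticipated in your second obstacle, and it makes the $c_2$ comparison unnecessary.
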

\begin{proof}
  It is immediate, applying Theorem \ref{pullback} that the vector bundle $\cG:=\pi^*\cF\otimes \xi$ is Ulrich with respect to $\xi$, where  $\cF$ is the one constructed in  Proposition \ref{rk2oncubicsurf}. An easy Chern class computation  gives that $c_1(\cG)$ and $c_2(\cG)$ are as in the statement.

  A priori the vector bundle $\cG$ could be an extension of the line bundles given in Proposition \ref{Ulrich on Palatini}. For $\cL=e_0$, we get $\cF=\cF'\otimes \cO_{S}(-2)=p^*(T_{\PP^{2}}) + e_0-2H$ and $\cG:=\pi^*\cF\otimes\xi$ satisfies $c_1(\cG)=2\xi+\pi^{*}(-e_0+\sum_{i=1}^{6}e_i)$. On the other hand, no extension of the line bundles $L_i$ and $L_j'$ in (\ref{listpalatini}) realizes this first Chern class and thus the rank  two bundle $\cG$ is stable.
\end{proof}

%
%

\section{Scrolls over $\PP^2$, ${\mathcal Q}^2$, $\FF_1$}
\label{Scroll over P2;Q2;F1}
As it was pointed out in the introduction, $\PP^2$ is the base surface of two of the  smooth $3$-dimensional embedded in  $\PP^5$, namely, the Segre scroll and the Bordiga scroll.

 In codimension greater than $2$, from the list of variety of low degree, whose existence is known, we see that  the base surface  of the scroll is either $\PP^2$,  or a smooth quadric surface $\mathcal Q \subset \PP^3$,   or $\FF_1$ (cf., e.g.,  \cite{fania-livorni-deg10},  \cite[Table 1, Table 2]{besana-fania},  \cite{besana-biancofiore}, \cite[Remark 3.3 and \S 7]{alzati-besana}).

The goal of this section is to construct low rank Ulrich bundles over such three dimensional scrolls.

\begin{prop}\label{p_2}
Let $(X,L) = (\mathbb{P}(\cE), \xi)$ be a $3$-dimensional scroll over a surface $S$, with $S=\PP^2$, of
degree $3\le d\le 12$. Let  $\xi={\mathcal O}_{\mathbb{P}(\cE)}(1)$ be the tautological line bundle and $\pi: \mathbb{P}(\cE) \to S$ be the projection morphism. Let $X$ be embedded by $|L|=|\xi|$ in $\PP^N$. Then $(X,L) = (\mathbb{P}(\cE), \xi)$ does not support any Ulrich line bundle, unless
\begin{enumerate}
  \item either $c_1({\cE})={\mathcal O}_{\PP^2}(2)$ and $c_2({\cE})=1$;
  \item or $c_1({\cE})={\mathcal O}_{\PP^2}(4)$ and $c_2({\cE})=10$;
    \item or $c_1({\cE})={\mathcal O}_{\PP^2}(4)$ and $c_2({\cE})=6$;
  \item or $c_1({\cE})={\mathcal O}_{\PP^2}(5)$ and $c_2({\cE})=15$;
\end{enumerate}

\noindent In each case there exist exactly two Ulrich  line bundles, namely,
$$L_1:=2\xi+\pi^{*}{\mathcal O}_{\PP^2}(-1)  \quad \text{and its companion} \quad L_2=\pi^{*}({\mathcal O}_{\PP^2}(e-2)),$$ if $c_i(\cE)$ are as in $(2)$ and $(4)$,  where $e=c_1({\cE})$,  and
$$L_1:=2\xi+\pi^{*}{\mathcal O}_{\PP^2}(-2)  \quad \text{and its companion} \quad L_2=\pi^{*}({\mathcal O}_{\PP^2}(e-1))$$
if $c_i(\cE)$ are as in  $(1)$ and $(3)$.
\end{prop}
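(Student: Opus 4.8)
The plan is to reduce the whole statement to the numerical criteria furnished by Corollary \ref{equivalent conditions for line bdl}. By that corollary, on a scroll over $S=\PP^2$, an Ulrich line bundle must be of type $(1)$, i.e. $\cL=\xi+\pi^{*}(D)$, or of type $(2)$, i.e. $\cL=2\xi+\pi^{*}(D)$ (since $a=0$ is the companion of a type $(2)$ bundle). First I would dispose of type $(1)$: a bundle $\xi+\pi^{*}(D)$ with $D=\cO_{\PP^2}(t)$ is Ulrich if and only if $H^i(\PP^2,\cO(t))=0$ and $H^i(\PP^2,c_1(\cE)-D)=0$ for all $i$. Computing $\chi$ via Riemann--Roch on $\PP^2$, the vanishing $\chi(\cO(t))=0$ forces $(t+1)(t+2)=0$, i.e.\ $t\in\{-1,-2\}$; then imposing simultaneously $\chi(c_1(\cE)-D)=0$ with $c_1(\cE)=\cO(e)$ yields a second quadratic constraint, and I expect a short parity/integrality check to rule out all solutions, so no Ulrich line bundle of type $(1)$ exists.

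The substantive case is type $(2)$. Here $\cL=2\xi+\pi^{*}\cO(t)$ is Ulrich if and only if $H^i(\PP^2,\cO(t))=0$ and $H^i(\PP^2,\cE(t))=0$ for all $i$, by Corollary \ref{equivalent conditions for line bdl}(2). The first condition again gives $t\in\{-1,-2\}$. The second condition, $\chi(\cE(t))=0$, is where the Chern classes of $\cE$ enter: I would write down $\chi(\cE(t))=\chi(\cE\otimes\cO(t))$ using Hirzebruch--Riemann--Roch for a rank-two bundle on $\PP^2$, namely
\begin{equation}
\chi(\cE(t))=2\chi(\cO(t))+c_1(\cE)\cdot\cO(t)+\tfrac12\bigl(c_1(\cE)^2-2c_2(\cE)\bigr)+\tfrac12 c_1(\cE)\cdot K_{\PP^2},
\end{equation}
and specialize to $t=-1$ and $t=-2$ separately. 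Setting this to zero, together with the degree constraint $3\le d\le 12$ (which bounds the admissible pairs $(c_1,c_2)$ since the degree of the scroll is $d=\xi^3=c_1(\cE)^2-c_2(\cE)$), should produce exactly the four listed pairs $(e,c_2)\in\{(2,1),(4,10),(4,6),(5,15)\}$, with $t=-2$ matching cases $(1),(3)$ and $t=-1$ matching cases $(2),(4)$, precisely as in the statement.

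Having pinned down the numerical candidates, the remaining work is to confirm that in each surviving case the cohomology genuinely vanishes in all degrees $i=0,1,2$, not merely that the Euler characteristic is zero. For $\cO(t)$ with $t\in\{-1,-2\}$ this is classical ($H^\bullet(\PP^2,\cO(-1))=H^\bullet(\PP^2,\cO(-2))=0$). For $\cE(t)$ I would invoke ampleness and spannedness of $\cE$ (hypotheses carried from Corollary \ref{equivalent conditions for line bdl}) to kill $H^2$ by Serre duality and positivity, and kill $H^0$ by a slope/initialization argument; the middle $H^1$ then follows from the vanishing of $\chi$ together with the outer vanishings. The companion bundles $L_2=\pi^{*}\cO(e-2)$ and $L_2=\pi^{*}\cO(e-1)$ are automatically Ulrich by Corollary \ref{equivalent conditions for line bdl}(3), which identifies $\cL_2=2\xi+c_1(\cE)+K_S-D$ as the partner; one checks that $c_1(\cE)+K_{\PP^2}-\cO(t)=\cO(e-3-t)$ reproduces the stated $L_2$. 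The main obstacle I anticipate is not the arithmetic but the case-by-case verification of the \emph{full} vanishing $H^\bullet(\PP^2,\cE(t))=0$ for the four specific bundles, since this genuinely depends on the geometry of each $\cE$ rather than on $\chi$ alone; I would expect to handle it uniformly using the ampleness hypothesis and Le Potier--type vanishing, treating any borderline case explicitly.
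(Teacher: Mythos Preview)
Your reduction via Corollary \ref{equivalent conditions for line bdl} and the numerical analysis (the constraints $t\in\{-1,-2\}$ from $\chi(\cO(t))=0$ and the Riemann--Roch equation for $\chi(\cE(t))=0$) match the paper's proof exactly. One small slip: in the type $(1)$ case the corollary asks for $H^i(D-c_1(\cE))=0$, not $H^i(c_1(\cE)-D)=0$; the conclusion (no type $(1)$ bundles for $e\ge 2$) is unaffected.

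The only substantive divergence is in the final verification that $H^i(\cE(t))=0$ for all $i$. You hope for a uniform argument from ampleness and Le~Potier--type vanishing, but this is too optimistic: ampleness alone does not give $H^0(\cE(t))=0$ (e.g.\ $\cO(1)\oplus\cO(e-1)$ is very ample yet can have sections after a negative twist), and stability of $\cE$ is not part of the hypotheses. The paper instead treats each of the four cases using the specific geometry of the scroll: for the Segre and Bordiga scrolls and the degree $10$ scroll with $e=5$, $\cE$ has an explicit linear resolution on $\PP^2$ (coming from their determinantal description) which yields the vanishings immediately; for the case $e=4$, $c_2=6$, the paper invokes \cite[Prop.~1.3]{ionescu3} to get that $\cE$ is stable, and then uses the slope argument you anticipate. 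Your hedge (``treating any borderline case explicitly'') is therefore where all the content lies, and you should expect to resolve it case-by-case via these structural inputs rather than by a single positivity theorem.
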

\begin{proof}
Let $L_1=a\xi+\pi^{*}{\mathcal O}_{\PP^2}(b)$  be a Ulrich line bundle on $\mathbb{P}(\cE)$, for some $a,b \in \ZZ$. Then $a=0,1,2$ by Corollary \ref{equivalent conditions for line bdl} .

 If $a=1$ then $L_1=\xi+\pi^{*}{\mathcal O}_{\PP^2}(b)$ and  $L_1$ Ulrich implies that  $H^i(\PP^2,{\mathcal O}_{\PP^2}(b))=H^i(\PP^2,{\mathcal O}_{\PP^2}(b)-c_1(\cE))=0$  for $i=0,1,2$ (Corollary \ref{equivalent conditions for line bdl}). Thus $\chi({\mathcal O}_{\PP^2}(b))=0$ which combined  with the Riemann-Roch Theorem  yields either $b=-1$ or $b=-2$. In both cases $\chi({\mathcal O}_{\PP^2}(b-e))\neq 0$ since $e=c_1({\cE})=5, 4, 2$.

If  $a=2$ then $L_1=2\xi+\pi^{*}{\mathcal O}_{\PP^2}(b)$ and  $L_1$ Ulrich implies that  $H^i(\PP^2,{\mathcal O}_{\PP^2}(b))=H^i(\PP^2,\cE(b))=0$  for $i=0,1,2$ (Corollary \ref{equivalent conditions for line bdl}).  As before $b=-1,-2$.  The vanishing $\chi(\cE(b))=0$ gives $2b^2+e^2+2be+6b+3e-2c_2+4=0$, where $e=c_1({\cE})$ and $c_2=c_2({\cE})$.

If $e=5$ and  $b=-2$, then $c_2=10$ and this is impossible. If   $e=5$ and  $b=-1$, then $c_2=15$ and the candidate Ulrich line bundle is $L_1=2\xi+\pi^{*}{\mathcal O}_{\PP^2}(-1)$.

 In this case $X\subset  \PP^6$ has degree $d=e^2-c_2=10$ and $X$ is a linear determinantal variety, namely, the degeneracy locus of a generic vector bundle homomorphism $u:{\mathcal O}_{\PP^6}^{\oplus 3} \to
{\mathcal O}_{\PP^6}(1)^{\oplus 5}$.  One easily verifies (cf., e.g.,  \cite{faenzi-fania}) that the vector bundle $\cE$  is the cokernel of
\begin{eqnarray}
\label{E}
0\to {\mathcal O}_{\PP^2}(-1)^{\oplus 5} \to {\mathcal O}_{\PP^2}^{\oplus 7} \to \cE \to 0.
\end{eqnarray}

Let us now check that the line bundle $L_1=2\xi+\pi^{*}{\mathcal O}_{\PP^2}(-1)$ satisifies
$H^i(\PP^2, {\mathcal O}_{\PP^2}(-1))=H^i(\PP^2,\cE(-1))=0$ for $i=0,1,2$. Since $H^i(\PP^2, {\mathcal O}_{\PP^2}(-1))=0$ for $i=0,1,2$ it remains to show that $H^i(\PP^2,\cE(-1))=0$ for $i=0,1,2$ and this follows from (\ref{E}).

If  $e=4$ and  $b=-1$, then $c_2=10$. In this case the $3$-fold $X \subset \PP^5$ has degree $6$ and  the candidate Ulrich line bundle is $L_1=2\xi+\pi^{*}{\mathcal O}_{\PP^2}(-1)$.   Let us now check that the line bundle $L_1=2\xi+\pi^{*}{\mathcal O}_{\PP^2}(-1)$ satisifies $H^i(\PP^2, {\mathcal O}_{\PP^2}(-1))=H^i(\PP^2,\cE(-1))=0$ for $i=0,1,2$. Since $H^i(\PP^2, {\mathcal O}_{\PP^2}(-1))=0$ for $i=0,1,2$ it remains to show that $H^i(\PP^2,\cE(-1))=0$ for $i=0,1,2$.  The Bordiga scroll  is a linear determinantal variety,\cite[\S 3]{ottaviani:scrolls}, namely, the degeneracy locus of a generic vector bundle homomorphism ${\mathcal O}_{\PP^5}(-1)^{\oplus 4} \to
{\mathcal O}_{\PP^5}^{\oplus 3}$.  One easily verifies (cf., e.g.,  \cite{faenzi-fania}) that the vector bundle $\cE$  is the cokernel of
\begin{eqnarray}
\label{E!}
0\to {\mathcal O}_{\PP^2}(-1)^{\oplus 4} \to {\mathcal O}_{\PP^2}^{\oplus 6} \to \cE \to 0.
\end{eqnarray}
from which it follows that $H^i(\PP^2,\cE(-1))=0$ for $i=0,1,2$.

If $e=4$ and  $b=-2$, then $c_2=6$. In  this case the candidate Ulrich line bundle with respect to the polarization $\xi$   is $L_1=2\xi+\pi^{*}{\mathcal O}_{\PP^2}(-2)$. The condition $H^i(\mathcal{O}_{\PP^2}(-2))=0$ for all $i\geq 0$ is trivially satisfied. Furthermore, the vector bundle $\cE$ is stable (cf. \cite[Prop 1.3]{ionescu3}) and from \cite[1.2.5 Lemma]{okonek-schneider-spindler} it follows that $H^{0}(\cE(-2))=0$. On the other hand it is easy to verify that $H^2(\cE(-2))=0$ and that $\chi(\cE(-2))=0$ and thus $H^{i}(\cE(-2))=0$ for all $i\ge 0$. In this case the companion of $L_1$ is  $L_2=\pi^{*}{\mathcal O}_{\PP^2}(3)$.

 If  $e=2$ and  $b=-1$, then $c_2=3$ and this is impossible. If $e=2$ and  $b=-2$,  then $c_2=1$. In this case the candidate Ulrich line bundle is $L_1=2\xi+\pi^{*}{\mathcal O}_{\PP^2}(-2)$. Moreover the $3$-fold $X \subset \PP^5$ is the Segre variety $\PP^1\times \PP^2$, its degree is $3$  and $\cE\cong \cO_{\PP^2}(1)^2$.  An easy computiation gives that $H^i(\PP^2, {\mathcal O}_{\PP^2}(-2))=H^i(\PP^2,\cE(-2))=0$ for $i=0,1,2$.
   \end{proof}
   \begin{rmk}
The existence of Ulrich line bundles in  the cases $(1)$, $(2)$ and $(4)$ of Proposition \ref{p_2}  (i) was already known using another approach. Precisely:
  \begin{enumerate}
    \item[$\bullet$] \emph{(Case $c_1(\cE)=2,c_2(\cE)=1$ on $\PP^2$)}. Then $\cE\cong \cO_{\PP^2}(1)^2$ and $X\cong \PP^2\times\PP^1\subset\PP^5$.  The Segre threefold  was known to support two Ulrich line bundles: $\cO_X(H_1)=\cO_X(1,0)$ and $\cO_X(2H_2)=\cO(0,2)$ in the usual base (see \cite[Prop. 3.2]{costa-miro_roig-pons_llopis}). As we have
        $$
        \begin{array}{lc}
          \xi=H_1+H_2 & H=H_1,
        \end{array}
        $$
        \noindent we recover the two Ulrich line bundles provided by Corollary(\ref{equivalent conditions for line bdl}), namely, $2\xi-2H=2H_2$ and $H=H_1$.
     \vspace{3mm}

        In the next two cases  $X$ is a linear determinantal varieties (cf. \cite{ottaviani:scrolls} and \cite{fania-livorni-deg10}) and thus the existence of Ulrich line bundles follows from \cite{miro_roig-pons_llopis:surfaces_P4}.

    \item[$\bullet$] \emph{(Case $c_1(\cE)=4,c_2(\cE)=10$ on $\PP^2$)}.  In this case $X\subset\PP^5$ is the Bordiga scroll, it has degree $6$ and resolution:
        $$
        0\ra\cO_{\PP^5}(-4)^3\ra\cO_{\PP^5}(-3)^4\ra\cI_X\ra 0.
        $$
        which corresponds to the following $\cO_{\PP^2}$-resolution of $\cE$:
        $$
        0\ra\cO_{\PP^2}(-1)^4\ra\cO_{\PP^2}^6\ra\cE\ra 0.
        $$
        \noindent Hence, the line bundles $L_1:=2\xi-H=-K_X$ and $L_2:=2H$ are Ulrich.
    \item[$\bullet$] \emph{(Case $c_1(\cE)=5$ and $c_2(\cE)=15$ on $\PP^2$)}.  The bundle $\cE$ has resolution
  $$
  0\ra\cO_{\PP^2}(-1)^5\ra\cO_{\PP^2}^7\ra\cE\ra 0.
  $$
  \noindent Hence, $X:=\mathbb{P}(\cE)\subset\PP^6$ is a linear determinantal variety of degree $10$ (as in  \cite{fania-livorni-deg10}) with resolution
  $$
  0\ra\cO_{\PP^6}(-5)^6\ra\cO_{\PP^6}(-4)^{15}\ra\cO_{\PP^6}(-3)^{10}\ra\cI_X\ra 0.
  $$
  Then $L_1:=2\xi-H=-K_X+H$ and $L_2:=3H$ are Ulrich line bundles on $X$.
  \end{enumerate}
\end{rmk}

%
%

\begin{prop}\label{Q_2_and_F1}
Let $(X,L) = (\mathbb{P}(\cE), \xi)$ be a $3$-dimensional scroll over a surface $S$, with $S$ either  ${\mathcal Q}^2$ or $\FF_1$
of degree $8\le d\le 11$. Let  $\xi={\mathcal O}_{\mathbb{P}(\cE)}(1)$ be the tautological line bundle and $\pi: \mathbb{P}(\cE) \to S$ be the projection morphism. Let $X$ be embedded by $|L|=|\xi|$ in $\PP^N$. Then $(X,L) = (\mathbb{P}(\cE), \xi)$ does not support any Ulrich line bundle, unless   $S={\mathcal Q}^2$,  $c_1({\cE})={\mathcal O}_{{\mathcal Q}^2}(3,3)$, $c_2({\cE})=9$ and there exist exactly two Ulrich  line bundles, namely,

\begin{center}
$L_1:=\xi+\pi^{*}{\mathcal O}_{{\mathcal Q}^2}(-1,2)\quad \text{and its companion} \quad L_2=\xi+\pi^{*}{\mathcal O}_{{\mathcal Q}^2}(2,-1).$
\end{center}
\end{prop}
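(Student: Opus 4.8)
The plan is to apply Corollary~\ref{equivalent conditions for line bdl} to reduce the existence of Ulrich line bundles on $X=\mathbb{P}(\cE)$ to purely cohomological conditions on the base surface $S$, and then to run a case analysis over the finitely many numerical types $(\cE, c_1, c_2)$ that can occur for scrolls of degree $8\le d\le 11$ over ${\mathcal Q}^2$ or $\FF_1$. Recall that for a rank two bundle $\cE$ on a surface the scroll degree is $d=\xi^3=c_1(\cE)^2-c_2(\cE)$, so I would first enumerate the admissible pairs $(c_1(\cE),c_2(\cE))$ compatible with $8\le d\le 11$ and with $\cE$ being very ample and spanned (using the classification references already cited, e.g.\ \cite{besana-fania}, \cite{besana-biancofiore}, \cite{alzati-besana}). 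This yields a short list to examine.

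For each candidate, by Corollary~\ref{equivalent conditions for line bdl} a line bundle $L=a\xi+\pi^{*}D$ can only be Ulrich for $a\in\{0,1,2\}$, and by part~(3) it suffices to treat $a=1$ (type $(1)$) and $a=2$ (type $(2)$), each one automatically producing its companion. So I would compute, on $S$, the Euler characteristics $\chi(S,D)$, $\chi(S,D-c_1(\cE))$ (for type $(1)$) and $\chi(S,D)$, $\chi(S,\cE(D))$ (for type $(2)$) via Riemann--Roch. Writing $D$ in the standard basis of $\mathrm{Pic}(S)$ — for ${\mathcal Q}^2$ this is $D\equiv {\mathcal O}_{{\mathcal Q}^2}(\alpha,\beta)$, for $\FF_1$ the classes of the section and fibre — the simultaneous vanishing of these two Euler characteristics imposes two Diophantine equations on the coordinates of $D$. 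The key point is that a necessary condition for $L$ to be Ulrich is $\chi(S,D)=0$ together with the second vanishing, and in most numerical types these equations have no integral solution, which rules out Ulrich line bundles outright without any genuine cohomology computation.

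The surviving case should be exactly $S={\mathcal Q}^2$, $c_1(\cE)={\mathcal O}_{{\mathcal Q}^2}(3,3)$, $c_2(\cE)=9$ (note $d=18-9=9$, consistent with the range). Here the numerical equations will force a type~$(1)$ bundle with $D\equiv{\mathcal O}_{{\mathcal Q}^2}(-1,2)$ (and by the symmetry of $\mathcal{Q}^2$ also $(2,-1)$, which is precisely its companion, since the companion of $\xi+D$ is $\xi+(c_1(\cE)+K_S-D)$ and here $c_1(\cE)+K_{{\mathcal Q}^2}={\mathcal O}_{{\mathcal Q}^2}(1,1)$). It then remains to \emph{verify} that the candidate genuinely satisfies the full vanishing $H^i({\mathcal Q}^2,D)=H^i({\mathcal Q}^2,c_1(\cE)-D)=0$ for all $i$. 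The vanishings for the explicit line bundles ${\mathcal O}_{{\mathcal Q}^2}(-1,2)$ and ${\mathcal O}_{{\mathcal Q}^2}(4,1)={\mathcal O}_{{\mathcal Q}^2}(c_1(\cE)-D)$ follow from the K\"unneth formula on ${\mathcal Q}^2\cong\PP^1\times\PP^1$, since a bundle ${\mathcal O}(\alpha,\beta)$ with one of $\alpha,\beta$ equal to $-1$ has vanishing cohomology in every degree.

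The main obstacle will not be any single computation but rather ensuring the case list is genuinely \emph{exhaustive}: one must pin down, for each base surface and each degree $d$ in the range, precisely which $(c_1(\cE),c_2(\cE))$ occur for an actual very ample scroll, so that no admissible numerical type is overlooked. For ${\mathcal Q}^2$ one writes $c_1(\cE)={\mathcal O}_{{\mathcal Q}^2}(p,q)$ and the second equation from $\chi(\cE(D))=0$ will, after Riemann--Roch on $\PP^1\times\PP^1$, cut down the possibilities sharply; for $\FF_1$ the analogous analysis — with $K_{\FF_1}$ and the intersection form in the section/fibre basis — must be shown to leave \emph{no} solutions at all. Once the case analysis is organized so that a nonzero $\chi(S,D)$ or the second Euler characteristic kills every type except the one stated, the positive verification via K\"unneth is routine.
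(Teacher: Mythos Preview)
Your approach is exactly the paper's: reduce via Corollary~\ref{equivalent conditions for line bdl} to the vanishing of $\chi(S,D)$ and of the second Euler characteristic, solve the resulting Diophantine equations in the coordinates of $D$, and then verify the surviving candidate by K\"unneth. Two points, however, are more delicate than your outline suggests. First, the Euler-characteristic equations do \emph{not} eliminate every case except the stated one. On $\FF_1$ with $a=1$ (and $c_1(\cE)=3C_0+5f$) the system has the integral solutions $(\alpha,\beta)=(2,0)$ and $(-1,2)$; these must be killed by an honest cohomology check (e.g.\ $h^0(\FF_1,2C_0)=1\neq 0$, respectively $h^2(\FF_1,-4C_0-3f)\neq 0$). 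More substantially, on ${\mathcal Q}^2$ with $a=2$ the system $(\alpha+1)(\beta+1)=0$, $\chi(\cE(\alpha,\beta))=0$ has the solution $(\alpha,\beta)=(-1,-1)$, and ruling it out requires showing $H^0(\cE(-1,-1))\neq 0$. This is not a line-bundle computation: the paper invokes the explicit presentations of $\cE$ from \cite{fania-livorni-deg10} to see a nonzero section. So your claim that ``a nonzero $\chi$ kills every type except the one stated'' is the one genuine gap.

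A small slip in your verification: the type~$(1)$ condition in Corollary~\ref{equivalent conditions for line bdl} asks for $H^i(S,D-c_1(\cE))=0$, not $H^i(S,c_1(\cE)-D)=0$. With $D=\cO_{{\mathcal Q}^2}(-1,2)$ and $c_1(\cE)=(3,3)$ this is $\cO_{{\mathcal Q}^2}(-4,-1)$, not $\cO_{{\mathcal Q}^2}(4,1)$; the K\"unneth argument then applies as you intend, but $\cO_{{\mathcal Q}^2}(4,1)$ itself certainly has sections.
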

\begin{proof}  Assume  that the base surface is ${\mathcal Q}^2$ and let  $L_1=a\xi+\pi^{*}{\mathcal O}_{{\mathcal Q}^2}(\alpha, \beta)$, for some $\alpha,\beta \in \ZZ$  be a Ulrich line bundle on $\mathbb{P}(\cE)$. Corollary \ref{equivalent conditions for line bdl}  gives $a=0,1,2$

If $a=1$ then $L_1=\xi+\pi^{*}{\mathcal O}_{{\mathcal Q}^2}(\alpha,\beta)$ and  $L_1$ Ulrich implies that  $H^i({\mathcal Q}^2, {\mathcal O}_{{\mathcal Q}^2}(\alpha,\beta))=H^i({\mathcal Q}^2,{\mathcal O}_{{\mathcal Q}^2}(\alpha,\beta)-c_1(\cE))=0$  for $i=0,1,2$ (Corollary  \ref{equivalent conditions for line bdl}). Thus $\chi({\mathcal O}_{{\mathcal Q}^2}(\alpha,\beta))=\chi({\mathcal O}_{{\mathcal Q}^2}(\alpha-3,\beta-3))$ which combined  with the Riemann-Roch Theorem gives  $\alpha \beta+\alpha+ \beta+1=(\alpha+1)(\beta+1)=0$ and  $\alpha \beta-2\alpha-2 \beta+4=(\alpha-2)(\beta-2)=0$, and thus either $\alpha=-1$ and $\beta=2$, or $\alpha=2$ and $\beta=-1$.  We need only to verify that $$
H^{i}({\mathcal Q}^2, {\mathcal O}_{{\mathcal Q}^2}(-1,2))=H^{i}({\mathcal Q}^2, {\mathcal O}_{{\mathcal Q}^2}(-1,2)-c_1(\cE))=0 \quad  \text{for}\quad  i\ge 0.$$
By the K\"unneth formula, this holds true precisely when $c_1({\cE})={\mathcal O}_{{\mathcal Q}^2}(3,3)$.

If $a=2$ then $L_1=2\xi+\pi^{*}{\mathcal O}_{\mathcal Q}^2(\alpha,\beta)$ and  $L_1$ Ulrich implies that  $H^i({\mathcal Q}^2, {\mathcal O}_{{\mathcal Q}^2}(\alpha,\beta))=H^i({\mathcal Q}^2,\cE({\mathcal O}_{{\mathcal Q}^2}(\alpha,\beta)))=0$  for $i=0,1,2$ (Corollary  \ref{equivalent conditions for line bdl}). Thus $\chi({\mathcal O}_{{\mathcal Q}^2}(\alpha,\beta))=\chi(\cE({\mathcal O}_{{\mathcal Q}^2}(\alpha,\beta)))=0$.  From $\chi({\mathcal O}_{{\mathcal Q}^2}(\alpha,\beta))=0$ it follows that $\alpha \beta+\alpha+ \beta+1=0$, that is $(\alpha+1)(\beta+1)=0$ and from $\chi(\cE({\mathcal O}_{{\mathcal Q}^2}(\alpha,\beta)))=0$, combined  with the Riemann-Roch Theorem it follows that  $2\alpha \beta+5\alpha+5 \beta+8=0$. One can easily see that the only possibility is  $\alpha= \beta=-1$.  From \cite[Remark 7.5]{fania-livorni-deg10} the possible presentations of $\cE$ are
 $$0 \to {\mathcal O}_{{\mathcal Q}^2}(0,-3)\to  {\mathcal O}_{{\mathcal Q}^2}(1,-1) \oplus {\mathcal O}_{{\mathcal Q}^2}(1,0)\oplus {\mathcal O}_{{\mathcal Q}^2}(1,1)\to \cE \to 0$$
 and
 $$0 \to {\mathcal O}_{{\mathcal Q}^2}(-3,0)\to  {\mathcal O}_{{\mathcal Q}^2}(-1,1) \oplus {\mathcal O}_{{\mathcal Q}^2}(0,1)\oplus {\mathcal O}_{{\mathcal Q}^2}(1,1)\to \cE\to 0,$$
 from which it follows that $H^{0}(\cE({\mathcal O}_{{\mathcal Q}^2}(-1,-1)))\neq 0$. Thus this case cannot occur.
 \vspace{2mm}

Assume now that $S=\FF_1$, and let $\pi:\FF_1\to\mathbb{P}^1$ be the natural projection map. Denote by $C_0$ and $f$ the unique section of self-intersection $-1$ and the class of a fiber of $\pi$, respectively. If for some $a,\alpha,\beta \in \ZZ$ the line bundle $L_1=a\xi+\pi^{*}(\alpha C_0+ \beta f)$ on $\mathbb{P}(\cE)$ is Ulrich, then $a=0,1,2$ by Corollary \ref{equivalent conditions for line bdl}.

If $a=1$ then $L_1=\xi+\pi^{*}(\alpha C_0+ \beta f)$ and  $L_1$ Ulrich implies that  $H^i(\FF_1, \alpha C_0+ \beta f)=H^i(\FF_1,\alpha C_0+ \beta f -c_1(\cE))=0$  for $i=0,1,2$ (Corollary  \ref{equivalent conditions for line bdl}). Thus $\chi(\alpha C_0+ \beta f))=\chi((\alpha-3) C_0+ (\beta -5)f))=0$, because  $c_1({\cE})=3C_0+5f$. The Riemann-Roch Theorem thus yields the equations $\alpha^2-\alpha-2\beta-2\alpha \beta-2=(\alpha+1)(\alpha-2\beta-2)=0$ and $\alpha^2+3\alpha+4\beta-2\alpha \beta-10=(\alpha-2)(\alpha-2\beta+5)=0$, the only integral solutions of which are $(\alpha, \beta)=(2, 0), (-1,2)$.  If $(\alpha, \beta)=(2, 0)$ then the cohomology groups $H^i(\FF_1, \alpha C_0+ \beta f)=H^i(\FF_1, 2C_0)$  are not all zero. Analogously, for $(\alpha, \beta)=(-1, 2)$ not all the cohomology groups   $H^i(\FF_1, (\alpha-3) C_0+ (\beta-5) f)=H^i(\FF_1, -4C_0-3f)$  vanish.

If $a=2$ then $L_1=a\xi+\pi^{*}(\alpha C_0+ \beta f)$, and  $L_1$ Ulrich implies that  $H^i(\FF_1, \alpha C_0+ \beta f)=H^i(\FF_1,\cE(\alpha C_0+ \beta f))=0$  for $i=0,1,2$ (Corollary  \ref{equivalent conditions for line bdl}). Thus $\chi(\alpha C_0+ \beta f))=(\alpha+1)(\alpha-2\beta-2)=0$ and $\chi(\cE(\alpha C_0+ \beta f))=0$, that is, $\alpha^2-3\alpha-5\beta-2\alpha \beta-9=0$  if $c_2(\cE)=10$, and  $\alpha^2-3\alpha-5\beta-2\alpha \beta-8=0$  if $c_2(\cE)=11$. These two set of equations do not have any integral solution.

Hence we conclude that there are no Ulrich line bundles on the $3$-dimensional scroll $X = \mathbb{P}(\cE)$ over  $\FF_1$ with  $c_1({\cE})=3C_0+5f$ and  $c_2({\cE})=10, \text{or} \, \,11$.
\end{proof}

As concerns rank two Ulrich bundles, their  existence  has been proved in \cite{costa-miro_roig-pons_llopis} for the Segre scroll and in \cite{miro_roig-pons_llopis:surfaces_P4} for linear determinantal varieties. It remains to consider the $3$-fold scroll over $\PP^2$ with  $c_1(\cE)={\mathcal O}_{\PP^2}(4)$ and $c_2({\cE})=6$ , the $3$-fold scroll over ${\mathcal Q}^2$ with $c_1({\cE})={\mathcal O}_{{\mathcal Q}^2}(3,3)$ and  $c_2({\cE})=9$ and the $3$-fold scrolls over $\FF_1$ with $c_1({\cE})=3C_0+5f$ and  $c_2({\cE})=10, \text{or} \, \,11$.

We will now investigate the remaining cases.  It will turn out that all these cases support stable Ulrich bundles of rank $2$.

Let us start with the case of   a $3$-dimensional scroll $X$ over $\PP^2$.   We will construct stable rank two Ulrich bundles on $X$ using  Theorem \ref{pullback} along with the following result

%


\begin{prop}\label{rk2onP2}
  For $e\geq 1$, there exists a rank two bundle $\cF$ on $\PP^2$ with  Chern classes $c_1(\cF)= \cO_{\PP^{2}}(e-3)$, $c_2(\cF)=\frac{e^2-3e+4}{2}$  and satisfying $H^i(\PP^2,\cF)=H^i(\PP^2,\cF(-e))=0$ for $i=0,1,2$.
\end{prop}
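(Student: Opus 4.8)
The plan is to recognise the two required families of vanishings as a single Ulrich condition on $\PP^2$ for a different polarisation. I would set $\cF':=\cF\otimes\cO_{\PP^2}(e)$. Since $\di\PP^2=2$, by definition $\cF'$ is Ulrich on the polarised surface $(\PP^2,\cO_{\PP^2}(e))$ exactly when $H^i(\PP^2,\cF'(-je))=0$ for $i=0,1,2$ and $j=1,2$; unwinding the twist, these six vanishings are precisely $H^i(\PP^2,\cF)=H^i(\PP^2,\cF(-e))=0$ for $i=0,1,2$. Thus the statement is equivalent to the existence of a rank two Ulrich bundle $\cF'$ on $(\PP^2,\cO_{\PP^2}(e))$, from which I recover the desired sheaf as $\cF:=\cF'\otimes\cO_{\PP^2}(-e)$.

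Next I would check that the Chern classes in the statement are forced, so that no fine tuning is needed in the construction. Writing $c_1=c_1(\cF')$ (as an integer) and using $\chi(\cF'(m))=\chi(\cF')+m(c_1+3)+m^2$ on $\PP^2$, the two conditions $\chi(\cF'(-e))=\chi(\cF'(-2e))=0$ (implied by the Ulrich vanishings) form a system whose unique solution is $c_1(\cF')=3e-3$ and $c_2(\cF')=\tfrac12(5e^2-9e)+2$; equivalently, any rank two Ulrich bundle on $(\PP^2,\cO_{\PP^2}(e))$ is automatically special in the sense of Proposition~\ref{ulrichchernclassessurface}. Applying $c_1(\cF'(-e))=c_1(\cF')-2e$ and $c_2(\cF'(-e))=c_2(\cF')-e\,c_1(\cF')+e^2$ then gives exactly $c_1(\cF)=\cO_{\PP^2}(e-3)$ and $c_2(\cF)=\tfrac{e^2-3e+4}{2}$. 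This is routine bookkeeping.

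The real content, and the only genuine obstacle, is the existence of a rank two Ulrich bundle $\cF'$ on the Veronese surface $(\PP^2,\cO_{\PP^2}(e))$ for every $e\ge1$. For small $e$ this is explicit: for $e=1$ one takes $\cF'=\cO_{\PP^2}^{\oplus2}$ (so $\cF=\cO_{\PP^2}(-1)^{\oplus2}$), and for $e=2$ one takes $\cF'=T_{\PP^2}$, the unique rank two Ulrich bundle there by \cite{coskun-genc:ulrich-veronese}; I note that plain twists of $T_{\PP^2}$ do not suffice (their $c_2$ is already wrong for $e=4$), so a genuine construction is needed for general $e$. The existence is available from the study of Ulrich bundles on Veronese surfaces \cite{coskun-genc:ulrich-veronese}. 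If a self-contained argument is preferred, I would produce $\cF'$ by the Serre correspondence: choose a general $0$-dimensional subscheme $Z\subset\PP^2$ of length $\tfrac12(5e^2-9e)+2$ satisfying the Cayley--Bacharach condition with respect to $\cO_{\PP^2}(3e-6)$, so that a non-split extension
$$0\to\cO_{\PP^2}\to\cF'\to\cI_{Z}(3e-3)\to0$$
defines a rank two vector bundle, and then verify $H^\bullet(\cF'(-e))=H^\bullet(\cF'(-2e))=0$ by chasing cohomology through this sequence. A dimension count shows that in both twists the connecting map $H^1(\cI_Z(2e-3))\to H^2(\cO_{\PP^2}(-e))$ (respectively $H^1(\cI_Z(e-3))\to H^2(\cO_{\PP^2}(-2e))$) is a map between spaces of equal dimension $\tfrac{(e-1)(e-2)}{2}$ (respectively $2e^2-3e+1$), so the required vanishing is equivalent to these connecting maps being isomorphisms. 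I expect this simultaneous non-degeneracy — choosing $(Z,\text{extension class})$ generically so that both connecting maps are isomorphisms — to be exactly where the work lies, the reduction of the first paragraph being the easy part.

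Finally, once $\cF$ is constructed the application is immediate: Theorem~\ref{pullback} then yields that $\cG:=\pi^{*}\cF\otimes\xi$ is a rank two Ulrich bundle on the scroll $X=\PP(\cE)$ with respect to $\xi$, which is the use made of this proposition in the sequel.
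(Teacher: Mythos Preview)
Your proposal is correct and follows essentially the same route as the paper: both reduce the statement to the existence of a rank two Ulrich bundle $\cF'$ on $(\PP^2,\cO_{\PP^2}(e))$ and recover $\cF=\cF'(-e)$. The only differences are bibliographic and expository: the paper invokes \cite[Proposition~4]{beauville-ulrich-intro} directly for the existence of $\cF'$ on every Veronese surface (so your Serre-correspondence sketch and the genericity issue you flag are unnecessary), and it reads off the Chern classes from \cite[Proposition~2.1]{casnati-ulrich} rather than computing them by hand.
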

\begin{proof} The vanishings $H^i(\PP^{2},\cF)=H^i(\PP^{2},\cF(-e))=0$ for $i=0,1,2$ will be satisfied if we take $\cF:=\cF'\otimes \cO_{\PP^{2}}(-e)$ for some rank $2$ Ulrich vector bundle $\cF'$  with respect  to $\cO_{\PP^{2}}(e)$. Such an $\cF'$ exists by \cite[Proposition 4]{beauville-ulrich-intro}. From \cite[Proposition 2.1]{casnati-ulrich}  we easily see that the Chern classes of $\cF=\cF'\otimes  \cO_{\PP^{2}}(-e)$ are as in the statement.
\end{proof}

Now we can state:
\begin{thm}
\label{stable rk2 Ulrich on Scrolls overP2}
  On a  scroll $X:=\PP_{\PP^{2}}(\cE)$ there are stable  rank two Ulrich bundles $\cG$ with Chern classes
  $c_1(\cG)=2\xi+\pi^{*}(\cO_{\PP^{2}}(e-3))$ and $c_2(\cG)=\xi^2+\xi\cdot \pi^{*}(\cO_{\PP^{2}}(e-3))+\frac{e^2-3e+4}{2}\mathfrak{f}$.
\end{thm}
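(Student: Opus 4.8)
The plan is to obtain $\cG$ by the pullback recipe of Theorem \ref{pullback}, applied to the auxiliary bundle produced in Proposition \ref{rk2onP2}, and then to establish stability by combining a control on which Ulrich line bundles can exist with a parity argument on the second Chern class. Throughout I write $e$ for the integer with $c_1(\cE)=\cO_{\PP^2}(e)$ and $h=\pi^*\cO_{\PP^2}(1)$, so that $\mathfrak{f}=h^2$.

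First I would take the rank two bundle $\cF$ on $\PP^2$ furnished by Proposition \ref{rk2onP2}, which by construction satisfies $H^i(\PP^2,\cF)=H^i(\PP^2,\cF(-c_1(\cE)))=0$ for $i=0,1,2$. Since $X$ is a scroll, $\cE$ is very ample and spanned, so the hypotheses of Theorem \ref{pullback} hold and $\cG:=\pi^*\cF\otimes\xi$ is Ulrich with respect to $\xi$. The Chern classes follow from the rank two twisting formulas $c_1(\cH\otimes L)=c_1(\cH)+2c_1(L)$ and $c_2(\cH\otimes L)=c_2(\cH)+c_1(\cH)c_1(L)+c_1(L)^2$: with $c_1(\cF)=\cO_{\PP^2}(e-3)$ and $c_2(\cF)=\frac{e^2-3e+4}{2}$ one recovers $c_1(\cG)=2\xi+\pi^*\cO_{\PP^2}(e-3)$ and $c_2(\cG)=\xi^2+\xi\cdot\pi^*\cO_{\PP^2}(e-3)+\frac{e^2-3e+4}{2}\mathfrak{f}$, exactly as stated.

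The real content is stability, and this is the step I expect to be the main obstacle. By Proposition \ref{Ulrichproperties}(iii) the bundle $\cG$ is semistable, and if it were not stable it would sit in an extension $0\to L_1\to\cG\to L_2\to 0$ with $L_1,L_2$ Ulrich line bundles of the same slope. Writing $L_j=a_j\xi+\alpha_jh$, Corollary \ref{equivalent conditions for line bdl} forces $a_j\in\{0,1,2\}$, while comparing first Chern classes gives $a_1+a_2=2$ and $\alpha_1+\alpha_2=e-3$. The tempting shortcut of distinguishing $\cG$ from the extension by $c_1$ alone fails here, since these relations already guarantee $c_1(L_1)+c_1(L_2)=c_1(\cG)$; this is precisely what makes the $\PP^2$ case harder than, say, the Palatini scroll of Theorem \ref{stable rk2 Ulrich on PalatiniScroll}, where $c_1$ sufficed.

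To overcome this I would pass to second Chern classes, the subtlety being that a crude parity count is not by itself conclusive. Using the Grothendieck relation $\xi^2=e\,\xi h-c_2(\cE)\mathfrak{f}$ one finds that in the basis $\{\xi h,\mathfrak{f}\}$ of $A^2(X)$ the class $c_2(\cG)$ has $\xi h$-coefficient $2e-3$, which is odd, whereas $c_1(L_1)\cdot c_1(L_2)$ has $\xi h$-coefficient $a_1a_2e+a_1\alpha_2+a_2\alpha_1$; this is even when $(a_1,a_2)\in\{(2,0),(0,2)\}$ but equals exactly $2e-3$ in the mixed case $(a_1,a_2)=(1,1)$. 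Hence the key preliminary step is to rule out type $(1)$ Ulrich line bundles: by Corollary \ref{equivalent conditions for line bdl} such a bundle would need $\chi(\cO_{\PP^2}(\alpha))=\chi(\cO_{\PP^2}(\alpha-e))=0$, and the same Riemann--Roch computation as in Proposition \ref{p_2} forces $\alpha\in\{-1,-2\}$, hence $\alpha-e\le -3$ and $\chi(\cO_{\PP^2}(\alpha-e))\neq 0$ once $e\ge 2$ (which holds since a very ample rank two bundle on $\PP^2$ has $e\ge 2$). With the case $(1,1)$ excluded only $(2,0)$ and $(0,2)$ survive, the $\xi h$-coefficient of $c_1(L_1)\cdot c_1(L_2)$ is then even, and its comparison with the odd value $2e-3$ yields the required contradiction. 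Therefore $\cG$ admits no destabilizing sequence and is stable.
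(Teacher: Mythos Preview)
Your proof is correct, and the construction of $\cG$ together with the Chern class computation is exactly what the paper does. The difference lies in the stability argument. The paper simply invokes the explicit list of Ulrich line bundles from Proposition~\ref{p_2} (valid only for the degree range $3\le d\le 12$ treated there) and observes, case by case, that no extension of $L_1$ by $L_2$ has the right $c_2$. You instead give a uniform argument: you first reprove the nonexistence of type~$(1)$ Ulrich line bundles for any $e\ge 2$, and then use the parity of the $\xi h$-coefficient of $c_2$ (the value $2e-3$ is odd, while $c_1(L_1)\cdot c_1(L_2)$ contributes an even coefficient whenever $(a_1,a_2)\in\{(2,0),(0,2)\}$) to exclude the remaining extensions. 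Your route is slightly more self-contained and in fact proves the theorem for every scroll over $\PP^2$ with $e\ge 2$, not only those falling within the degree bound implicit in the paper's appeal to Proposition~\ref{p_2}; the paper's version, on the other hand, is shorter because the classification of Ulrich line bundles has already been carried out.
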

\begin{proof}
  Applying Theorem \ref{pullback}, it is immediate that the vector bundle $\cG:=\pi^*\cF\otimes \xi$ is Ulrich with respect to $\xi$, where  $\cF$ is the one constructed in  Proposition \ref{rk2onP2} with $e=c_1(\cE)$. An easy Chern class computation  gives that $c_1(\cG)$ and $c_2(\cG)$ are as in the statement.

  A priori the vector bundle $\cG$ could be an extension of the Ulrich line bundles $L_1$ and $L_2$  given in Proposition \ref{p_2}.  No extensions of the Ulrich line bundles $L_1$ and $L_2$  have second Chern class as that of $\cG$ and thus the rank  two bundle $\cG$ is stable.
\end{proof}

Let us move on to the case of $(X,L) = (\mathbb{P}(\cE), \xi)$  a $3$-dimensional scroll over ${\mathcal Q}^2$,  with $c_1({\cE})={\mathcal O}_{{\mathcal Q}^2}(3,3)$, $c_2({\cE})=9$.  In order to show the existence of stable Ulrich bundles of rank two on $X$ we follow the method proposed in \cite{casanellas-hartshorne-geiss-schreyer}: we will first compute the dimension $l$ of simple rank two Ulrich bundles $\cF$ on $X$ obtained as an extension of the Ulrich line bundles from \ref{Q_2_and_F1}. Then we will show the existence of a modular family of simple rank two Ulrich bundles and show that the dimension of the modular family at $\cF$ is higher than $l$. Then we can conclude that the generic element of this modular family should be stable. The existence of such a modular family on $X$ is guaranteed by the following result (see \cite[Prop. 2.10]{casanellas-hartshorne-geiss-schreyer}):

\begin{prop}\label{modular}
On a nonsingular projective variety X, any bounded family of simple bundles $\cF$ with given rank and Chern classes satisfying $H^2(\cF\otimes\cF^{\vee})=0$ has
a smooth modular family.
\end{prop}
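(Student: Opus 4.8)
The plan is to treat this as a pure deformation-theoretic statement and to realize the desired family as a slice of a suitable Quot scheme. Since each $\cF$ is locally free one has $\Ext^i(\cF,\cF)\cong H^i(X,\cF\otimes\cF^\vee)$, so the hypothesis reads $\Ext^2(\cF,\cF)=0$ while simplicity reads $\dim\Hom(\cF,\cF)=1$. The guiding principle to record first is the standard one: the tangent space to deformations of $\cF$ is $\Ext^1(\cF,\cF)$ and the obstructions lie in $\Ext^2(\cF,\cF)=0$, so every infinitesimal deformation is unobstructed. The content of the proposition is to globalize this into an honest smooth base of dimension $\dim\Ext^1(\cF,\cF)$ carrying a flat family whose Kodaira--Spencer maps are isomorphisms.

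First I would exploit boundedness. Since the family is bounded there is a single $n\gg 0$ for which all members $\cF$ are globally generated with $H^i(X,\cF(n))=0$ for $i\ge 1$ and with a common Hilbert polynomial $P$; set $N:=P(n)=h^0(\cF(n))$. Each such $\cF$ then arises as a quotient
\[
0\to\cK\to\cO_X(-n)^{N}\to\cF\to 0,
\]
hence determines a point of $Q:=\mathrm{Quot}(\cO_X(-n)^{N},P)$, and I would restrict attention to the open locus $Q^{\circ}\subset Q$ where the quotient is a simple locally free sheaf inducing an isomorphism on $H^0(\,\cdot\,(n))$. Applying $\Hom(-,\cF)$ to the displayed sequence and using $\Ext^i(\cO_X(-n)^{N},\cF)=H^i(X,\cF(n))^{N}=0$ for $i\ge 1$, the long exact sequence gives $\Ext^1(\cK,\cF)\cong\Ext^2(\cF,\cF)=0$. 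Since the tangent and obstruction spaces of $Q$ at $[\cF]$ are $\Hom(\cK,\cF)$ and $\Ext^1(\cK,\cF)$, this shows $Q$ is smooth along $Q^{\circ}$, of dimension $\dim\Hom(\cK,\cF)=N^2-1+\dim\Ext^1(\cF,\cF)$ (the value $\dim\Hom(\cF,\cF)=1$ entering here).

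Finally I would pass to a slice. The group $\GL_N$ acts on $Q$ with its centre $\GG_m$ acting trivially, and on $Q^{\circ}$ the quotient $\mathrm{PGL}_N$ acts with trivial stabilizers, precisely because simplicity forces $\Aut(\cF)=\GG_m$ to consist only of scalars. The orbits are therefore smooth of dimension $N^2-1$, and through $[\cF]$ I would choose a smooth locally closed slice $S\subset Q^{\circ}$ transverse to the orbit, so that $\dim S=\dim\Ext^1(\cF,\cF)$. Restricting the tautological quotient on $X\times Q$ to $X\times S$ yields a flat family $\{\cF_s\}_{s\in S}$. Transversality identifies $T_sS$ with a complement to the orbit direction $\im\big(\Hom(\cO_X(-n)^{N},\cF_s)\to\Hom(\cK_s,\cF_s)\big)$, which is exactly the kernel of the surjection $\Hom(\cK_s,\cF_s)\to\Ext^1(\cF_s,\cF_s)$; hence the Kodaira--Spencer map $T_sS\to\Ext^1(\cF_s,\cF_s)$ is an isomorphism at every $s$, and freeness of the $\mathrm{PGL}_N$-action makes the family injective on points. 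This is the sought smooth modular family.

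The hard part will be this last step: descending from the smooth Quot scheme to a base of the correct dimension $\dim\Ext^1(\cF,\cF)$ with bijective Kodaira--Spencer. The cohomological vanishing only delivers smoothness of $Q$; it is simplicity that is indispensable for turning this into a modular family, since it guarantees that $\mathrm{PGL}_N$ acts freely so that a transverse étale slice exists (e.g. by a Luna-type slice argument, or equivalently by invoking Mukai's representability of the functor of simple sheaves by a smooth algebraic space and taking an étale chart). I would expect the careful bookkeeping of the group action and the verification of versality along the slice to be the only genuinely delicate points.
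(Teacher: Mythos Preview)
The paper does not give its own proof of this proposition; it simply quotes it from \cite[Prop.~2.10]{casanellas-hartshorne-geiss-schreyer}. Your Quot-scheme argument is the standard route to this statement and is essentially the one found in that reference (which in turn follows Hartshorne's earlier work): bound the family, embed into a Quot scheme, use $\Ext^2(\cF,\cF)=0$ to get smoothness of the Quot scheme along the relevant locus, and use simplicity to ensure the $\mathrm{PGL}_N$-action is free so that an \'etale slice gives the modular family with bijective Kodaira--Spencer map. Your identification of the delicate point---passing from smoothness of $Q$ to a versal slice of the correct dimension---is accurate, and the way you handle it is sound.
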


\begin{prop}\label{rk2Ulrich on scroll over Q}
Let $(X,L) = (\mathbb{P}(\cE), \xi)$ be a $3$-dimensional scroll over ${\mathcal Q}^2$,  with $c_1({\cE})={\mathcal O}_{{\mathcal Q}^2}(3,3)$, $c_2({\cE})=9$. Let  $\xi={\mathcal O}_{\mathbb{P}(\cE)}(1)$ be the tautological line bundle and $\pi: \mathbb{P}(\cE) \to {\mathcal Q}^2$ be the projection morphism. Let $X$ be embedded by $|\xi|$ in $\PP^N$. Then there exists a family of dimension $7$  of simple rank $2$ Ulrich bundles $\cF$ on $X$. Moreover, $H^2(\cF\otimes\cF^{\vee})=0$ and $\chi(\cF\otimes\cF^{\vee})=-14$.
\end{prop}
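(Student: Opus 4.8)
The plan is to mimic the strategy already used on the $K3$ and Palatini scrolls: produce the desired bundles as non-split extensions of the two Ulrich line bundles $L_1=\xi+\pi^{*}{\mathcal O}_{{\mathcal Q}^2}(-1,2)$ and $L_2=\xi+\pi^{*}{\mathcal O}_{{\mathcal Q}^2}(2,-1)$ found in Proposition \ref{Q_2_and_F1}. Since $L_1\otimes L_2^{\vee}\cong\pi^{*}{\mathcal O}_{{\mathcal Q}^2}(-3,3)$ restricts trivially to each fibre of $\pi$, the projection formula together with the vanishing $R^1\pi_{*}\cO_X=0$ (Proposition \ref{properties-taut-bundle}) gives $\Ext^1(L_2,L_1)\cong H^1(X,\pi^{*}{\mathcal O}_{{\mathcal Q}^2}(-3,3))\cong H^1({\mathcal Q}^2,{\mathcal O}_{{\mathcal Q}^2}(-3,3))$. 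By the K\"unneth formula this is $H^1(\PP^1,\cO(-3))\otimes H^0(\PP^1,\cO(3))\cong\CC^{8}$. Each nonzero class in $\Ext^1(L_2,L_1)$ then yields an extension
\[
0\to L_1\to\cF\to L_2\to 0 .
\]

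Next I would verify the three required properties of such an $\cF$. First, $\cF$ is Ulrich: twisting the displayed sequence by $-j\xi$ and passing to cohomology gives $H^{\bullet}(X,\cF(-j\xi))=0$ for $j=1,2,3$, because the flanking terms $L_i(-j\xi)$ have vanishing cohomology. Secondly, $\cF$ is simple by \cite[Lemma 4.2]{casanellas-hartshorne-geiss-schreyer}, since $L_1$ and $L_2$ are non-isomorphic Ulrich line bundles of the same $\xi$-slope $12$. Finally, $\Hom(L_1,L_2)=\Hom(L_2,L_1)=0$ (both are $H^0$ of $\pi^{*}{\mathcal O}_{{\mathcal Q}^2}(\pm3,\mp3)$, which vanishes), so the only isomorphisms between these extensions come from scalar automorphisms of the factors, and the bundles $\cF$ are therefore parametrized by $\PP\Ext^1(L_2,L_1)\cong\PP^{7}$; this is the asserted $7$-dimensional family.

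For the two cohomological invariants I would dualize the extension to $0\to L_2^{\vee}\to\cF^{\vee}\to L_1^{\vee}\to 0$ and tensor it against the original sequence. This produces a filtration of $\cF\otimes\cF^{\vee}$ whose successive quotients are $\cO_X$ (with multiplicity two), $\pi^{*}{\mathcal O}_{{\mathcal Q}^2}(-3,3)$ and $\pi^{*}{\mathcal O}_{{\mathcal Q}^2}(3,-3)$. Propagating cohomology through the two short exact sequences, the vanishing $H^2(\cF\otimes\cF^{\vee})=0$ reduces to $H^2(\cO_X)=H^2({\mathcal Q}^2,\cO)=0$ and $H^2\bigl(\pi^{*}{\mathcal O}_{{\mathcal Q}^2}(\pm3,\mp3)\bigr)=H^2\bigl({\mathcal Q}^2,{\mathcal O}_{{\mathcal Q}^2}(\pm3,\mp3)\bigr)=0$, the latter again by K\"unneth (the degree-$2$ part $H^1(\cO(\pm3))\otimes H^1(\cO(\mp3))$ vanishes). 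Additivity of $\chi$ along the filtration then yields $\chi(\cF\otimes\cF^{\vee})=2\chi(\cO_X)+\chi(\pi^{*}\cO(-3,3))+\chi(\pi^{*}\cO(3,-3))=2\cdot 1+(-8)+(-8)=-14$, using $\chi({\mathcal Q}^2,{\mathcal O}(\pm3,\mp3))=\chi(\PP^1,\cO(3))\,\chi(\PP^1,\cO(-3))=4\cdot(-2)=-8$.

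All the computations are routine once the filtration of $\cF\otimes\cF^{\vee}$ is in place; the only point needing a little care is that the extensions genuinely give a $7$-dimensional family of bundles rather than a lower-dimensional one, which is exactly where the vanishing of the $\Hom$-groups between $L_1$ and $L_2$ enters. I anticipate no serious obstacle here. Looking ahead, this numerology is precisely what the next step will exploit: once one checks $\Ext^3(\cF,\cF)\cong\Hom(\cF,\cF(K_X))^{\vee}=0$ from the slope inequality $\mu_{\xi}(\cF(K_X))=0<12=\mu_{\xi}(\cF)$, the vanishing $H^2(\cF\otimes\cF^{\vee})=0$ and $\chi=-14$ force the modular family of Proposition \ref{modular} through $\cF$ to have dimension $h^1(\cF\otimes\cF^{\vee})=15>7$, so that its generic member cannot be an extension of $L_1$ and $L_2$ and must therefore be stable.
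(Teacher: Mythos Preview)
Your proof is correct and follows essentially the same approach as the paper: build the family as non-split extensions of the two Ulrich line bundles from Proposition \ref{Q_2_and_F1}, compute $\dim\Ext^1=8$ via K\"unneth, invoke \cite[Lemma 4.2]{casanellas-hartshorne-geiss-schreyer} for simplicity, and extract $H^2(\cF\otimes\cF^{\vee})=0$ and $\chi=-14$ from the induced filtration. The paper merely labels this last step ``a standard computation using (\ref{aaa})'', whereas you spell it out; you are also more careful in justifying that non-proportional extension classes give non-isomorphic bundles (via $\Hom(L_1,L_2)=\Hom(L_2,L_1)=0$), a point the paper leaves implicit.
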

\begin{proof}

From Proposition \ref{Q_2_and_F1} it follows that  $L_1:=\xi+\pi^{*}{\mathcal O}_{{\mathcal Q}^2}(-1,2)$ and $L_2=\xi+\pi^{*}{\mathcal O}_{{\mathcal Q}^2}(2,-1)$  are the only two Ulrich line bundles on  $\mathbb{P}(\cE)$ with respect to $\xi$.  Note that
$$
\dim \Ext^1(L_1,L_2)=h^1({\mathcal Q}^2, \pi^{*}{\mathcal O}_{{\mathcal Q}^2}(3,-3))= h^1(\PP^1,{\mathcal O}_{\PP^1}(-3))\cdot  h^0(\PP^1,{\mathcal O}_{\PP^1}(3))=8$$
So we get  a family of dimension $7$  of non trivial-extensions $\cF$ of $L_1$ by $L_2$
\begin{equation}\label{aaa}
  0\ra \xi+\pi^{*}{\mathcal O}_{{\mathcal Q}^2}(-1,2)\ra \cF\ra \xi+\pi^{*}{\mathcal O}_{{\mathcal Q}^2}(2,-1) \ra 0.
\end{equation}

 By \cite[Lemma 4.2]{casanellas-hartshorne-geiss-schreyer} any non-trivial extension $\cF$ of $L_1$ by $L_2$ is simple, being $L_1$ and $L_2$ non-isomorphic Ulrich line bundles of the same slope.  Finally, a standard computation using (\ref{aaa}) shows that $H^2(\cF\otimes\cF^{\vee})=0$ and $\chi(\cF\otimes\cF^{\vee})=-14$.
\end{proof}

\begin{thm}\label{stablerank2UlrichonQ2}
  Let $(X,L) = (\mathbb{P}(\cE), \xi)$ be a $3$-dimensional scroll over ${\mathcal Q}^2$,  with $c_1({\cE})={\mathcal O}_{{\mathcal Q}^2}(3,3)$, $c_2({\cE})=9$. Then $X$ supports stable rank 2 Ulrich bundles $\cF$ with Chern classes $c_1(\cF)= 2\xi +\pi^*\cO_{\mathcal{Q}^2}(1,1)$ and $c_2(\cF)=\xi^2+\xi\cdot\pi^*\cO_{\mathcal{Q}^2}(1,1)+5\mathfrak{f}$.
\end{thm}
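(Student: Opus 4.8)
The plan is to follow the deformation method of \cite{casanellas-hartshorne-geiss-schreyer}, building on the two preparatory statements already in place. By Proposition \ref{rk2Ulrich on scroll over Q} there is a $7$-dimensional family of \emph{simple} rank two Ulrich bundles $\cF$ arising as non-split extensions of $L_1=\xi+\pi^*\cO_{\mathcal{Q}^2}(-1,2)$ by $L_2=\xi+\pi^*\cO_{\mathcal{Q}^2}(2,-1)$; each such $\cF$ has $c_1(\cF)=L_1+L_2=2\xi+\pi^*\cO_{\mathcal{Q}^2}(1,1)$ and $c_2(\cF)=\xi^2+\xi\cdot\pi^*\cO_{\mathcal{Q}^2}(1,1)+5\mathfrak{f}$ as in the statement, and satisfies $H^2(\cF\otimes\cF^\vee)=0$ together with $\chi(\cF\otimes\cF^\vee)=-14$. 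Proposition \ref{modular} then produces a smooth modular family $\cM$ of simple bundles with these invariants passing through any such $\cF$. The whole argument reduces to showing that the generic member of $\cM$ is stable, which I would establish by comparing the dimension of $\cM$ with that of the locus of strictly semistable bundles sitting inside it.

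First I would compute $\dim_{[\cF]}\cM=\ext^1(\cF,\cF)=h^1(\cF\otimes\cF^\vee)$. Simplicity of $\cF$ gives $h^0(\cF\otimes\cF^\vee)=1$, while $h^2(\cF\otimes\cF^\vee)=0$ is part of Proposition \ref{rk2Ulrich on scroll over Q}. For the top group, Serre duality on the threefold and self-duality of $\cF\otimes\cF^\vee$ give $h^3(\cF\otimes\cF^\vee)=h^0(\cF\otimes\cF^\vee\otimes K_X)$. Using $K_X=-2\xi+\pi^*(K_{\mathcal{Q}^2}+c_1(\cE))=-2\xi+\pi^*\cO_{\mathcal{Q}^2}(1,1)$ together with $\xi^3=c_1(\cE)^2-c_2(\cE)=9$ and $\pi^*\cO_{\mathcal{Q}^2}(1,1)\cdot\xi^2=\cO_{\mathcal{Q}^2}(1,1)\cdot c_1(\cE)=6$, one checks $K_X\cdot\xi^2=-12<0$, so $K_X$ has negative $\xi$-slope. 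Since $\cF$ is Ulrich, hence $\mu_\xi$-semistable by Proposition \ref{Ulrichproperties}, the bundle $\cF\otimes\cF^\vee$ is $\mu_\xi$-semistable of slope $0$, so $\cF\otimes\cF^\vee\otimes K_X$ is $\mu_\xi$-semistable of negative slope and therefore has no global sections. Thus $h^3(\cF\otimes\cF^\vee)=0$, and from $\chi(\cF\otimes\cF^\vee)=-14$ we get $h^1(\cF\otimes\cF^\vee)=15$, i.e. $\dim\cM=15$.

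Next I would bound the strictly semistable locus. Any strictly semistable rank two Ulrich bundle with the prescribed Chern classes contains an Ulrich sub-line bundle of the same slope, hence is an extension of the only two Ulrich line bundles $L_1,L_2$ of Proposition \ref{Q_2_and_F1}. A K\"unneth computation as in Proposition \ref{rk2Ulrich on scroll over Q} gives $\ext^1(L_1,L_2)=h^1(X,\pi^*\cO_{\mathcal{Q}^2}(3,-3))=8$ and, symmetrically, $\ext^1(L_2,L_1)=h^1(X,\pi^*\cO_{\mathcal{Q}^2}(-3,3))=8$. Since $\Hom(L_1,L_2)=\Hom(L_2,L_1)=0$, the non-split extensions in either direction form families of dimension $\di\PP(\Ext^1)=7$, while the split bundle $L_1\oplus L_2$ is a single point; hence the strictly semistable bundles sweep out a locus of dimension at most $7$ inside $\cM$. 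As $7<15=\dim\cM$ and $\cM$ is smooth at $[\cF]$, there are points of $\cM$ arbitrarily close to $\cF$ that are stable. Finally, being Ulrich is an open condition (upper semicontinuity of the defining cohomology groups) and Chern classes are locally constant, so such a generic member of $\cM$ is again Ulrich with the Chern classes in the statement, completing the argument.

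The main obstacle is exactly the strict dimension inequality $7<\dim\cM$: everything hinges on pinning down $\dim\cM=15$, and the delicate point there is the vanishing $h^3(\cF\otimes\cF^\vee)=0$, equivalently the negativity of $\mu_\xi(K_X)$. This is also the structural reason the present method succeeds here while it fails in the $K3$ case of Theorem \ref{stable rk2 Ulrich on K3scroll}, where the analogous comparison degenerates and the existence of stable rank two Ulrich bundles had to be obtained instead through Theorem \ref{pullback}.
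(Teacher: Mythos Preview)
Your proposal is correct and follows essentially the same route as the paper: build the modular family $\cM$ via Proposition \ref{modular}, compute $\dim\cM=\ext^1(\cF,\cF)=15$ from $\chi(\cF\otimes\cF^\vee)=-14$, and compare with the $7$-dimensional extension locus to force the generic member to be stable. Your write-up is in fact more thorough than the paper's, which simply asserts $\Ext^1(\cF,\cF)=1-\chi(\cF\otimes\cF^\vee)=15$ without spelling out the vanishing of $h^3(\cF\otimes\cF^\vee)$; your slope argument via $\mu_\xi(K_X)<0$ fills that in, and your explicit treatment of both extension directions $\Ext^1(L_1,L_2)$ and $\Ext^1(L_2,L_1)$ is likewise a detail the paper leaves implicit.
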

\begin{proof}
  Let $\mathcal{M}$ the family of rank $2$ simple Ulrich bundles on $X$ with Chern classes as in the statement and satisfying $H^2(\cF\otimes\cF^{\vee})=0$. Because Ulrich bundles are semistable, this family is bounded. Moreover, by Proposition \ref{rk2Ulrich on scroll over Q}, it is non-empty. Therefore, by Proposition \ref{modular}, $\mathcal{M}$ has a smooth modular family whose dimension at a point $[\cF]$ can be computed as $Ext^1(\cF,\cF)=1-\chi(\cF\otimes\cF^{\vee})=15$. Since the family of rank two Ulrich bundles that have a presentation as an extension of Ulrich line bundles has dimension seven, we can conclude that the generic element of the modular family is stable.
\end{proof}

Finally we deal with the case in which the base surface of the scroll is $\FF^1$.

\begin{thm}\label{rk2Ulrich on scroll over F1}
Let $(X,L) = (\mathbb{P}(\cE), \xi)$ be a $3$-dimensional scroll over $\FF_1$,  with $c_1(\cE)=3C_0+5f$, $c_2(\cE)=10, \,  {\text or} \, 11$. Let  $\xi={\mathcal O}_{\mathbb{P}(\cE)}(1)$ be the tautological line bundle and $\pi: \mathbb{P}(\cE) \to \FF_1$ be the projection morphism. Let $X$ be embedded by $|\xi|$ in $\PP^N$. Then there are stable  rank two Ulrich bundles $\cG$ with Chern classes
  $c_1(\cG)=2\xi+\pi^{*}(C_0+2f)$ and $c_2(\cG)=\xi^2+\xi \cdot \pi^{*}(C_0+2f)+6\mathfrak{f}$.
\end{thm}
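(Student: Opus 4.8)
The plan is to realize $\cG$ as a twisted pullback from the base via Theorem \ref{pullback}, and to extract stability for free from the fact---established in Proposition \ref{Q_2_and_F1}---that the scroll $X$ over $\FF_1$ carries no Ulrich line bundle at all. The first step is to produce on $\FF_1$ a rank two bundle $\cF$ with $H^i(\FF_1,\cF)=H^i(\FF_1,\cF(-c_1(\cE)))=0$ for $i=0,1,2$. I would obtain it by twisting a special rank two Ulrich bundle: since $\FF_1$ is a del Pezzo surface, and is moreover geometrically ruled over $\PP^1$, it supports a special rank two Ulrich bundle $\cF'$ with respect to the very ample polarization $H:=c_1(\cE)=3C_0+5f$ (cf. \cite{aprodu-costa-miro}, \cite{casanellas-hartshorne-geiss-schreyer}). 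By Proposition \ref{ulrichchernclassessurface} such an $\cF'$ has $c_1(\cF')=3H+K_{\FF_1}$ and $c_2(\cF')=\tfrac12(5H^2+3H\cdot K_{\FF_1})+2\chi(\cO_{\FF_1})$. Setting $\cF:=\cF'(-H)$, the two vanishings demanded by Theorem \ref{pullback} become exactly the Ulrich vanishings $H^i(\cF'(-jH))=0$ for $j=1,2$, which hold by construction.

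Applying Theorem \ref{pullback} then yields that $\cG:=\pi^*\cF\otimes\xi$ is Ulrich on $X$ with respect to $\xi$, and it remains to match Chern classes. Using $C_0^2=-1$, $C_0\cdot f=1$, $f^2=0$ and $K_{\FF_1}=-2C_0-3f$ one finds $H^2=21$ and $H\cdot K_{\FF_1}=-13$, whence $c_1(\cF)=c_1(\cF')-2H=H+K_{\FF_1}=C_0+2f$ and, via the twist formula $c_2(\cF'(-H))=c_2(\cF')-c_1(\cF')\cdot H+H^2$, one gets $c_2(\cF)=6$. The standard identities $c_1(\pi^*\cF\otimes\xi)=2\xi+\pi^*c_1(\cF)$ and $c_2(\pi^*\cF\otimes\xi)=\xi^2+\xi\cdot\pi^*c_1(\cF)+\pi^*c_2(\cF)$ then reproduce precisely $c_1(\cG)=2\xi+\pi^*(C_0+2f)$ and $c_2(\cG)=\xi^2+\xi\cdot\pi^*(C_0+2f)+6\mathfrak{f}$, since $\pi^*c_2(\cF)=6\mathfrak{f}$.

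For stability I would argue by exclusion. By Proposition \ref{Ulrichproperties}(iii), $\cG$ is semistable, and if it were strictly semistable it would sit in a short exact sequence whose sub- and quotient sheaves are Ulrich line bundles on $X$. But Proposition \ref{Q_2_and_F1} shows that $X$ supports no Ulrich line bundle whatsoever, so no such sequence can exist; hence $\cG$ is stable. I would also emphasize that the whole argument is uniform in $c_2(\cE)\in\{10,11\}$: the input to Theorem \ref{pullback} and the non-existence of Ulrich line bundles both depend only on $c_1(\cE)=3C_0+5f$, not on $c_2(\cE)$.

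The main obstacle is the very first step---securing a special rank two Ulrich bundle on $(\FF_1,3C_0+5f)$ with exactly the Chern classes dictated by Proposition \ref{ulrichchernclassessurface}, as opposed to merely some semistable or Ulrich bundle. Once that input is in hand, everything downstream is a direct application of Theorem \ref{pullback} together with routine Chern-class bookkeeping and the one-line stability argument above.
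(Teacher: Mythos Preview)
Your proposal is correct and follows essentially the same route as the paper: construct $\cG=\pi^*\cF\otimes\xi$ via Theorem \ref{pullback} with $\cF=\cF'(-c_1(\cE))$ for a special rank two Ulrich bundle $\cF'$ on $(\FF_1,3C_0+5f)$, compute the Chern classes, and deduce stability from the absence of Ulrich line bundles (Proposition \ref{Q_2_and_F1}). The obstacle you flag is not one: the existence of such an $\cF'$ is precisely \cite[Theorem 3.4]{aprodu-costa-miro}, which the paper invokes directly, and your Chern-class computations ($c_1(\cF')=7C_0+12f$, $c_2(\cF')=35$, hence $c_1(\cF)=C_0+2f$, $c_2(\cF)=6$) match the paper's. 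One small inaccuracy in your closing remark: the non-existence of Ulrich line bundles in Proposition \ref{Q_2_and_F1} does use $c_2(\cE)$ in the $a=2$ case (through $\chi(\cE(\alpha C_0+\beta f))$), so the argument is not literally independent of $c_2(\cE)$, though it does cover both values $10$ and $11$.
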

\begin{proof}
  By Theorem \ref{pullback}, the vector bundle $\cG:=\pi^*\cF\otimes \xi$ is Ulrich with respect to $\xi$ as soon as  $\cF:=\cF'\otimes \cO_{\FF_1}(-c_1(\cE))$ for some rank two Ulrich vector bundle $\cF'$ on $\FF_1$ with respect  to $ \cO_{\FF_1}(c_1(\cE))$. Such an $\cF'$ exists by  \cite[Theorem 3.4]{aprodu-costa-miro}  and is special;  in particular, Proposition \ref{ulrichchernclassessurface} yields $c_1(\cF')=7C_0+12f$ and $c_2(\cF')=35$. An easy Chern class computation  gives that $c_1(\cG)$ and $c_2(\cG)$ are as in the statement.

The rank  two bundle $\cG$ is stable since there are no Ulrich line bundle on  the $3$-dimensional scroll over $\FF_1$.
\end{proof}

%
%

\section{pushforwards}\label{ultima}
In the previous sections we were concerned with the study of Ulrich bundles on projective scrolls that can be constructed as a (modified) pull-back of a vector bundle on the base variety. In this section we are going to illustrate a method to perform the opposite operation, namely, starting with a Ulrich bundle satisfying a certain property we will obtain
an Ulrich bundle on the base $S$ of the same rank. Let us recall that a general hyperplane section $\tilde{S}$ of $X$ has the structure of a blow-up of $S$ at $c_2(\cE)$ points. We consider the following diagram:

\begin{equation}
    \label{composition}
    \xymatrix@-2ex{
    \tilde{S} \, \, \ar@{^{(}->}[r]^i \ar[rd]_{\pi'} & X \ar[d]^{\pi} \\
                                      & S,
    }
\end{equation}
where $i$ is the inclusion and $\pi'$ is the blow-up map; we denote by $E_i$ the exceptional divisors of the latter.

\begin{thm}\label{pushforwardulrich}
  Let $\pi:X:=\PP(\cE)\rightarrow S$ be a projective bundle threefold over a surface $S$ and let $\cG$ be an Ulrich bundle on $X$ with respect to the tautological line bundle $\xi$ of rank $r$. Let us suppose that on the generic fibre $F=\pi(s)$, $s\in S$, the vector bundle $\cG$ splits as follows: $\cG_F\cong\cO_{\PP^1}(1)^{ r}$. Then $\pi_*(\cG\otimes i_*(\cO_{\tilde{S}}(\sum E_i))$ is a rank $r$ Ulrich vector bundle on $S$ with respect to $c_1(\cE)$.
\end{thm}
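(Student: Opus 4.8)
The plan is to work on the general hyperplane section $\tilde S$ rather than directly on $X$, exploiting that $\cG|_{\tilde S}$ is again Ulrich. First I would rewrite the sheaf to be pushed forward. By the projection formula for the closed immersion $i$ one has $\cG\otimes i_*\cO_{\tilde S}(\sum E_i)=i_*\bigl(\cG|_{\tilde S}(\sum E_i)\bigr)$, so that $\cF=\pi'_*\bigl(\cG|_{\tilde S}(\sum E_i)\bigr)$. Adjunction on $X$, together with $K_X=-2\xi+\pi^*(K_S+c_1(\cE))$ and $K_{\tilde S}=\pi'^*K_S+\sum E_i$, yields the key identity $\xi|_{\tilde S}=\pi'^*c_1(\cE)-\sum E_i$. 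Substituting $\cO_{\tilde S}(\sum E_i)=\pi'^*c_1(\cE)\otimes\cO_{\tilde S}(-\xi|_{\tilde S})$ gives $\cG|_{\tilde S}(\sum E_i)=\cK\otimes\pi'^*c_1(\cE)$, where $\cK:=\cG|_{\tilde S}(-\xi|_{\tilde S})=(\cG(-\xi))|_{\tilde S}$, and hence $\cF=(\pi'_*\cK)\otimes c_1(\cE)$ by the projection formula.

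The first real step is to prove that $\cF$ is locally free of rank $r$. By Proposition \ref{Ulrichproperties}(i), $\cG|_{\tilde S}$ is an Ulrich bundle of rank $r$ on the surface $\tilde S$ with respect to $\xi|_{\tilde S}$, so $\cK$ is locally free of rank $r$. The crucial point is the behaviour of $\cK$ on the exceptional curves $E_i$, which are precisely the fibres of $\pi$ over the $c_2(\cE)$ points $p_i$ blown up by $\pi'$. Since the splitting type of $\cG$ on fibres is generically $\cO_{\PP^1}(1)^r$ by hypothesis, the locus $J\subsetneq S$ where it jumps is a proper closed subset; as $\tilde S$ is a general member of $|\xi|$, its vanishing points $p_1,\dots,p_{c_2(\cE)}$ are general and hence avoid $J$. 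Therefore $\cG|_{E_i}\cong\cO_{\PP^1}(1)^r$ and $\cK|_{E_i}\cong\cO_{E_i}^r$ for every $i$. I would then invoke the standard fact that a vector bundle on the blow-up $\tilde S$ which restricts trivially to each exceptional curve is the pullback of a vector bundle on $S$ (namely $\pi'_*\cK$); this simultaneously gives that $\pi'_*\cK$ is locally free of rank $r$ and that $R^1\pi'_*\cK=R^1\pi'_*\pi'^*(\pi'_*\cK)=(\pi'_*\cK)\otimes R^1\pi'_*\cO_{\tilde S}=0$. Thus $\cF=(\pi'_*\cK)\otimes c_1(\cE)$ has rank $r$.

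It then remains to verify the Ulrich vanishings $H^i(S,\cF(-jc_1(\cE)))=0$ for $i=0,1,2$ and $j=1,2$. Since $R^1\pi'_*\cK=0$, the Leray spectral sequence and the projection formula reduce these to cohomology on $\tilde S$. For $j=1$, $\cF(-c_1(\cE))=\pi'_*\cK$, so $H^i(S,\cF(-c_1(\cE)))=H^i(\tilde S,\cK)=H^i(\tilde S,\cG|_{\tilde S}(-\xi|_{\tilde S}))=0$ by the Ulrich property of $\cG|_{\tilde S}$. For $j=2$, $\cF(-2c_1(\cE))=\pi'_*\bigl(\cK\otimes\pi'^*(-c_1(\cE))\bigr)$, and using $\pi'^*c_1(\cE)=\xi|_{\tilde S}+\sum E_i$ one finds $\cK\otimes\pi'^*(-c_1(\cE))=\mathcal{B}(-\sum E_i)$ with $\mathcal{B}:=\cG|_{\tilde S}(-2\xi|_{\tilde S})$. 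From the restriction sequence $0\to\mathcal{B}(-\sum E_i)\to\mathcal{B}\to\mathcal{B}|_{\sum E_i}\to 0$, the Ulrich vanishing $H^\bullet(\tilde S,\mathcal{B})=0$ together with $\mathcal{B}|_{E_i}\cong\cO_{E_i}(-1)^r$ (so $H^\bullet(E_i,\mathcal{B}|_{E_i})=0$) give $H^i(\tilde S,\mathcal{B}(-\sum E_i))=0$, which completes the proof.

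I would expect the main obstacle to be the first step, namely ensuring that the pushforward is a genuine rank $r$ vector bundle. This hinges on controlling $\cG$ along the exceptional fibres $E_i$, where a priori the splitting type could jump and destroy both local freeness and the vanishing $R^1\pi'_*\cK=0$. The resolution is the observation that the generality of $\tilde S$ forces the blown-up points to avoid the jumping locus, so that $\cK$ is fibrewise trivial over the $E_i$ and descends cleanly; this same triviality is what makes $\mathcal{B}|_{E_i}$ acyclic in the $j=2$ computation. Everything else is a formal consequence of the Ulrich property of $\cG|_{\tilde S}$ and of the identity $\xi|_{\tilde S}=\pi'^*c_1(\cE)-\sum E_i$.
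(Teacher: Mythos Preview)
Your argument is correct and follows the same overall architecture as the paper's proof: both reduce to the hyperplane section $\tilde S$ via the projection formula, both use the adjunction identity $\xi|_{\tilde S}=\pi'^*c_1(\cE)-\sum E_i$ (equivalently $K_{\tilde S}+\tilde H=\pi'^*(K_S+c_1(\cE))$), and both exploit that $i^*\cG$ is Ulrich on $(\tilde S,\tilde H)$ together with the splitting type on the exceptional fibres. The difference is only in step three. The paper simply invokes \cite[Theorem~4.2]{casnati-kim}, which says precisely that for a blow-up $\pi':\tilde S\to S$ at finitely many points, if $\cH$ is Ulrich on $(\tilde S,\tilde H)$ with $\cH|_{E_i}\cong\cO_{E_i}(1)^r$, then $\pi'_*(\cH(\sum E_i))$ is Ulrich on $(S,c_1(\cE))$; the paper then stops. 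You instead carry out this descent by hand: you show $\cK=(i^*\cG)(-\tilde H)$ restricts trivially to each $E_i$, hence descends to a rank~$r$ bundle on $S$ with $R^1\pi'_*\cK=0$, and you verify the two Ulrich vanishings directly via Leray and the restriction sequence to $\sum E_i$. In effect you have reproved the surface case of \cite[Theorem~4.2]{casnati-kim}. Your route is longer but entirely self-contained; the paper's is a one-line citation. One small point worth making explicit in your write-up is why a general $\tilde S\in|\xi|$ has its $c_2(\cE)$ base points disjoint from the jumping locus $J$: this uses that $\cE$ is globally generated (since $\xi$ is very ample), so the incidence locus of sections vanishing somewhere on $J$ has codimension at least one in $|\xi|$.
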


\begin{proof}
 Let  $\tilde{S}\subset X$ be a generic section of $|\xi|$ and let us call $\tilde{H}$ the very ample line bundle on $\tilde{S}$ obtained as a restriction of $\xi$ to $\tilde{S}$. From adjunction theory, we know that $(S, c_1(\cE))$ is the reduction of  $(\tilde{S},\tilde{H})$ and thus
\begin{equation}\label{adjunction}
K_{\tilde{S}}+\tilde{H}\cong \pi'^*(K_S+c_1(\cE)).
\end{equation}

Now, applying the hypothesis concerning the generic splitting type of $\cG$, we are in position to apply \cite[Theorem 4.2]{casnati-kim} to conclude that $\pi'_*((i^*\cG)\otimes\cO_{\tilde{S}}(\sum E_i))$ is Ulrich of rank $r$ with respect to $c_1(\cE)$. It only remains to apply the projection formula
$$
\pi'_*((i^*\cG)\otimes\cO_{\tilde{S}}(\sum E_i))\cong (\pi\circ i)_*((i^*\cG)\otimes\cO_{\tilde{S}}(\sum E_i))\cong\pi_*(\cG\otimes i_*(\cO_{\tilde{S}}(\sum E_i)))
$$
\noindent to obtain the statement of the Theorem.
\end{proof}
Beauville pointed out  in \cite[Corollary 1.]{beauville-ulrich-intro} that if $X$ is a $n$-dimensional projective variety carrying an Ulrich bundle of rank $r$ with respect to a certain very ample line bundle $\cO_X(H)$ then it also supports rank $rn!$ Ulrich bundles with respect to  $\cO_X(dH)$, $d\geq 2$. The previous Theorem could be interpreted as a potential method to construct on $S$ Ulrich bundles of \emph{low rank} with respect to \emph{high degree} polarizations of the form $c_1(\cE)=dH$. It should be underlined, however, that if we start with some Ulrich bundle $\cF$ on $S$ with respect to the polarization $c_1(\cE)$ and apply first Theorem \ref{pullback} to $\cF(-c_1(\cE))$ and then Theorem \ref{pushforwardulrich} to the resulting vector bundle on $X$, we just recover the original Ulrich bundle $\cF$. More specifically we have the following Proposition.

\begin{prop}\label{bijection}
  Let $\pi:X:=\PP(\cE)\rightarrow S$ be a projective bundle threefold over a surface $S$. Then there exists a bijection:

$$
\left\{ \begin{array}{c}
          \text{Ulrich bundles $\cF$ of rank $r$ on $S$ } \\
          \text{with respect to $c_1(\cE)$}
        \end{array}\right\}_{\biggm/ \cong_{iso}}\Leftrightarrow\left\{\begin{array}{c}
          \text{Ulrich bundles $\cG$ of rank $r$ on $X$} \\
          \text{ with respect to $\xi$ such that} \\
           \text{$\cG_{|\pi^{-1}(s)}\cong\cO_{\PP^1}(1)^r$ for $s\in S$}
        \end{array}\right\}_{\biggm/ \cong_{iso}}
$$
given by the maps
\begin{equation*}
  \phi:\cF\quad\mapsto\quad \cG:=\pi^*\cF(-c_1(\cE))\otimes\xi;
\end{equation*}
\noindent and
  \begin{equation*}
  \psi:\cG\quad\mapsto\quad \cF:=\pi_*(\cG\otimes i_*(\cO_{\tilde{S}}(\sum E_i)).
\end{equation*}
\end{prop}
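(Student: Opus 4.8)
The plan is to verify that the two maps $\phi$ and $\psi$ are well defined (i.e. land in the indicated sets) and then that they are mutually inverse. The first task is essentially handled by the earlier results. Given an Ulrich bundle $\cF$ on $S$ with respect to $c_1(\cE)$, the definition of Ulrich (with $\di S=2$) gives $H^i(S,\cF(-c_1(\cE)))=H^i(S,\cF(-2c_1(\cE)))=0$ for $i=0,1,2$, so Theorem \ref{pullback} applied to the rank $r$ bundle $\cF(-c_1(\cE))$ shows that $\phi(\cF)=\pi^*\cF(-c_1(\cE))\otimes\xi$ is Ulrich with respect to $\xi$; on a fibre $F\cong\PP^1$ the pullback factor is trivial and $\xi|_F=\cO_{\PP^1}(1)$, so $\phi(\cF)|_F\cong\cO_{\PP^1}(1)^r$, placing $\phi(\cF)$ in the right-hand set. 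That $\psi$ lands in the left-hand set is exactly Theorem \ref{pushforwardulrich}.

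The linchpin for both compositions is a single divisor relation on $\tilde S$. Writing $\tilde H:=i^*\xi=N_{\tilde{S}/X}$ (valid since $\tilde S\in|\xi|$) and using $K_X=-2\xi+\pi^*(K_S+c_1(\cE))$, adjunction gives $K_{\tilde S}=(K_X+\xi)|_{\tilde S}=-\tilde H+(\pi')^*(K_S+c_1(\cE))$. Comparing with the blow-up formula $K_{\tilde S}=(\pi')^*K_S+\sum E_i$ yields
\[
\tilde H+\sum E_i=(\pi')^*c_1(\cE).
\]
Consequently, for any locally free sheaf $\cM$ on $S$, the projection formula together with $\pi'_*\cO_{\tilde S}=\cO_S$ gives $\pi'_*\bigl((\pi')^*\cM\otimes\tilde H\otimes\cO_{\tilde S}(\sum E_i)\bigr)\cong\cM(c_1(\cE))$.

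With this in hand, $\psi\circ\phi=\id$ is a direct computation. For $\cG=\pi^*\cF(-c_1(\cE))\otimes\xi$ one has $i^*\cG=(\pi')^*(\cF(-c_1(\cE)))\otimes\tilde H$; rewriting $\psi$ through the identity $\pi_*(\cG\otimes i_*\cO_{\tilde S}(\sum E_i))\cong\pi'_*(i^*\cG\otimes\cO_{\tilde S}(\sum E_i))$ from the proof of Theorem \ref{pushforwardulrich} and applying the displayed relation gives $\psi(\cG)\cong\cF(-c_1(\cE))(c_1(\cE))=\cF$.

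The reverse composition $\phi\circ\psi=\id$ is where the real work lies, and I expect it to be the main obstacle. Starting from an arbitrary $\cG$ in the right-hand set, the hypothesis $\cG|_F\cong\cO_{\PP^1}(1)^r$ makes $\cG\otimes\xi^{-1}$ trivial on every fibre; by cohomology and base change (Grauert), $\cM:=\pi_*(\cG\otimes\xi^{-1})$ is then locally free of rank $r$ with $R^1\pi_*(\cG\otimes\xi^{-1})=0$, and the adjunction morphism $\pi^*\cM\to\cG\otimes\xi^{-1}$ is a fibrewise isomorphism, hence an isomorphism, so that $\cG\cong\pi^*\cM\otimes\xi$. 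Feeding this into $\psi$ and using the displayed relation exactly as before gives $\psi(\cG)\cong\cM(c_1(\cE))$, whence $\cM\cong\psi(\cG)(-c_1(\cE))$ and therefore $\phi(\psi(\cG))=\pi^*\bigl(\psi(\cG)(-c_1(\cE))\bigr)\otimes\xi\cong\pi^*\cM\otimes\xi\cong\cG$. The only genuinely non-formal input is the ``trivial on fibres implies pull-back'' step, which rests on the base-change theorem; everything else is bookkeeping with the relation $\tilde H+\sum E_i=(\pi')^*c_1(\cE)$.
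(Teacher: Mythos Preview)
Your proof is correct, and for the well-definedness of $\phi$, $\psi$ and the identity $\psi\circ\phi=\id$ it is essentially the paper's argument: the same projection-formula computation based on the relation $\tilde H+\sum E_i=(\pi')^*c_1(\cE)$.

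For $\phi\circ\psi=\id$ you take a genuinely different, and in fact more direct, route. The paper first restricts both $\cG$ and $\phi\circ\psi(\cG)$ to the hyperplane section $\tilde S$, invokes an external result of Casnati--Kim to identify them there, and then shows that the restriction map $\Hom_X(\cG,\phi\circ\psi(\cG))\to\Hom_{\tilde S}(i^*\cG,i^*(\phi\circ\psi(\cG)))$ is an isomorphism via the exact sequence obtained by tensoring $0\to\cO_X(-\xi)\to\cO_X\to\cO_{\tilde S}\to 0$; the required vanishings use semistability of Ulrich bundles (for $H^0$) and the pushforward properties of $\pi$ (for $H^1$). You instead observe that the fibrewise hypothesis makes $\cG\otimes\xi^{-1}$ trivial on every fibre, apply Grauert/base change to conclude $\cG\cong\pi^*\cM\otimes\xi$ with $\cM=\pi_*(\cG\otimes\xi^{-1})$, and then recover $\psi(\cG)\cong\cM(c_1(\cE))$ by the same divisor relation already used. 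This avoids both the Casnati--Kim citation and the semistability input, and makes transparent that the right-hand set is nothing but $\{\pi^*\cM\otimes\xi:\cM\text{ locally free of rank }r\}$ intersected with the Ulrich condition. The paper's approach, on the other hand, has the virtue of keeping the argument entirely on the level of $\tilde S$ and $X$ without singling out the auxiliary bundle $\cM$.
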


\begin{proof}

  It is immediate to see that $(\pi^*\cF(-c_1(\cE))\otimes\xi)_{| \pi^{-1}(s)}\cong\cO_{\PP^1}(1)^r$ for any $s\in S$. This, together with Theorems \ref{pullback} and \ref{pushforwardulrich} show that $\phi$ and $\psi$ are well-defined. To conclude, we are going to show that both compositions define isomorphisms.\\
\vspace{0.1cm}
\emph{Claim: $\psi\circ\phi$ is an isomorphism:}\\
This follows from the following chain of isomorphisms:
$$
\begin{array}{cccl}
  \pi_*((\pi^*\cF(-c_1(\cE))\otimes\xi)\otimes i_*\cO_{\tilde{S}}(\sum E_i))& \stackrel{Proj. form.}{\cong} & \pi_*\circ i_*(i^*(\pi^*\cF(-c_1(\cE))\otimes\xi)\otimes \cO_{\tilde{S}}(\sum E_i)) & \cong\\
   \pi'_*(\pi'^*(\cF)\otimes\cO_{\tilde{S}}(\tilde{H}+\sum E_i+\pi'^*(-c_1(\cE)))& \stackrel{\tilde{H}+\cO_{\tilde{S}}(\sum E_i)\equiv \pi'^*(c_1(\cE))}{\cong} & \pi'_*\circ\pi'^*(\cF) &\cong\\
  \cF & & &
\end{array}
$$

\noindent  where the last isomorphism is obtained using the projection formula and the fact that $\pi'_*(\cO_{\tilde{S}})\cong \cO_S.$\\

\vspace{0.1cm}
 \emph{Claim: $\phi\circ\psi$ is an isomorphism:}\\
 Take $\cG$ a rank $r$ Ulrich bundle on $X$ with respect to $\xi$ such that $\cG_{|\pi^{-1}(s)}\cong\cO_{\PP^1}(1)^r$ for $s\in S$. Following the same proof as in the previous Claim, it follows that $\cG$ and $\phi\circ\psi(\cG)$ restricted to $\tilde{S}$ are isomorphic:

 \[\xymatrix{
 i^*\circ\pi^*(\pi_*(\cG\circ i_*(\sum E_i))\otimes\cO_S(-c_1(\cE)))\otimes\cO_{\tilde{S}}(\tilde{H})
               \ar@{}[r]|*=0[@]{\cong}
               \ar^-{Proj. form.}@{}[r]|*=0[@]{\cong}\quad
               &\pi'^*(\pi'_*(i^*\cG\otimes\cO_{\tilde{S}}(\sum E_i))\otimes \cO_S(-c_1(\cE)) )\otimes\cO_{\tilde{S}}(\tilde{H})\\
            \qquad \qquad  \qquad \qquad \qquad \qquad
               \ar^-{\text{\cite[Cor. 2.3]{casnati-kim}}}@{}[r]|*=0[@]{\cong}
              & i^*\cG\otimes\cO_{\tilde{S}}(\sum E_i)\otimes \cO_{\tilde{S}}(\pi'^*(-c_1(\cE))\otimes\cO_{\tilde{S}}(\tilde{H})\\
                 \ar@{}[r]|*=0[@]{\cong}
        \ar^-{\tilde{H}+\cO_{\tilde{S}}(\sum E_i)\equiv \pi'^*(c_1(\cE))}@{}[r]|*=0[@]{\cong}
& i^*\cG }\]


\noindent Therefore, it remains to show that this isomorphism can be extended to the entire vector bundle. In order to see that, notice that
 for any two vector bundles $\cA$ and $\cB$ on $X$ we have the short exact sequence:
 $$
 0\longrightarrow\cA^{\vee}\otimes\cB\otimes\xi^{\vee}\longrightarrow\cA^{\vee}\otimes\cB\longrightarrow\cA^{\vee}\otimes\cB_{|\tilde{S}}\longrightarrow 0.
 $$
 \noindent If we compute the long exact sequence of cohomology groups with $\cA:=\cG$ and $\cB:=\phi\circ\psi(\cG)$ we obtain that
 $$
 Hom(\cG,\phi\circ\psi(\cG))\cong H^0(\cG^{\vee}\otimes(\phi\circ\psi(\cG)))\cong H^0(\cG^{\vee}\otimes\phi\circ\psi(\cG)_{\tilde{S}})\cong Hom(\cG_{\tilde{S}},(\phi\circ\psi(\cG))_{\tilde{S}}),
 $$
 \noindent since $H^0(\cG^{\vee}\otimes(\phi\circ\psi(\cG)(-\xi)))=0$ (because $\cG$ and $\phi\circ\psi(\cG)$ are semistable of the same slopes) and $H^1(\cG^{\vee}\otimes(\phi\circ\psi(\cG)(-\xi)))=0$ by  Proposition \ref{properties-taut-bundle}. This allows us to conclude.
 \end{proof}

 \begin{rmk}
 The Proposition \ref{bijection}, jointly with Theorem \ref{stablerank2UlrichonQ2}, can be used to give a proof of the existence of stable rank 2 Ulrich bundles on the quadric $\mathcal{Q}^2$ with respect to $\cO_{{\mathcal Q}^2}(3,3)$.  An alternative approach is offered in \cite{antonelli}.
\end{rmk}

%
%

\bibliographystyle{alpha}

\newcommand{\etalchar}[1]{$^{#1}$}
\def\cprime{$'$} \def\cprime{$'$} \def\cprime{$'$} \def\cprime{$'$}
  \def\cprime{$'$} \def\cprime{$'$} \def\cprime{$'$} \def\cprime{$'$}
  \def\cprime{$'$}

\vfill
\end{document}